\documentclass[10pt]{amsart}
\usepackage{amsthm,amsmath,amssymb}

\hfuzz800pt
\global\overfullrule10pt

\theoremstyle{plain}
\newtheorem{theorem}{Theorem}
\newtheorem{corollary}[theorem]{Corollary}

\newtheorem{proposition}[theorem]{Proposition}
\newtheorem*{theorem*}{Theorem}
\newtheorem*{corollary*}{Corollary}
\newtheorem*{proposition*}{Proposition}
\theoremstyle{definition}

\theoremstyle{remark}
\newtheorem{remark}[theorem]{Remark}

\hyphenation{Berg-man}

\newcommand\CC{{\mathbf C}}

\newcommand\DD{{\mathbf D}}
\newcommand\BB{{\mathbf B}}
\newcommand\spr[1]{\langle#1\rangle}
\newcommand\hol{^{\text{hol}}}
\newcommand\harm{^{\text{harm}}}
\newcommand\mh{_{\text{Mh}}}
\renewcommand\Re{\operatorname{Re}}

\newcommand\oz{\overline z}
\newcommand\ozeta{\overline\zeta}
\newcommand\oa{\overline a}
\newcommand\ob{\overline b}
\newcommand\oc{\overline c}
\newcommand\Bn{{\BB^n}}
\newcommand\pBn{{\partial\Bn}}
\newcommand\bbd{{\overline{\Bn\times\Bn}\setminus\operatorname{diag}\pBn}}

\newcommand\wt{\widetilde}
\newcommand\td{\wt\Delta}
\newcommand\Sz{_{\fam0Sz}}

\newcommand\FF[5]{{}_#1\!F_#2\Big(\begin{matrix}#3\\#4\end{matrix}\Big|#5\Big)}
\newcommand\FE[4]{F_#1\Big(\begin{matrix}#2\\#3\end{matrix}\Big|#4\Big)}
\newcommand\FD[3]{FD_1\Big(\begin{matrix}#1\\#2\end{matrix}\Big|#3\Big)}
\newcommand\Mh{$M$-harmonic }
\newcommand\LL{\mathcal L}
\newcommand\GG[2]{\Gamma\Big(\begin{matrix}#1\\#2\end{matrix}\Big)}
\newcommand\chpq{\mathcal H^{pq}}
\newcommand\bhpq{\mathbf H^{pq}}
\newcommand\Un{{U(n)}}
\newcommand\cdd[2]{#1\cdot\overline#2}
\newcommand\HH{{\mathcal H}}
\newcommand\dsph{\Delta_{\text{sph}}}
\newcommand\cR{\mathcal R}
\newcommand\cD{\mathcal D}
\newcommand\cS{\mathcal S}
\newcommand\oZ{{\overline Z}}

\makeatletter
\newcommand{\vast}{\bBigg@{4}}
\newcommand{\Vast}{\bBigg@{5}}
\makeatother

\renewcommand\[{\begin{equation}}
\renewcommand\]{\end{equation}}
\allowdisplaybreaks[4]

\begin{document}

\title[$M$-harmonic kernels]{$M$-harmonic reproducing kernels on the ball}
\author[M.~Engli\v s]{Miroslav Engli\v s}
\address{Mathematics Institute, Silesian University in Opava,
 Na~Rybn\'\i\v cku~1, 74601~Opava, Czech Republic {\rm and }
 Mathematics Institute, \v Zitn\' a 25, 11567~Prague~1,
 Czech Republic}
\email{englis{@}math.cas.cz}
\author{El-Hassan Youssfi}
\address{Aix-Marseille Universit\'e, I2M UMR CNRS~7373,
 39 Rue F-Juliot-Curie, 13453~Marseille Cedex 13, France}
\email{el-hassan.youssfi{@}univ-amu.fr}
\thanks{Research supported by GA\v CR grant no.~21-27941S
 and RVO funding for I\v CO~67985840.}
\subjclass{Primary 32A36; Secondary 33C55, 31C05, 33C70}
\keywords{\Mh function, invariant Laplacian, Bergman space, Bergman kernel, Szeg\"o kernel}
\begin{abstract} Using the machinery of unitary spherical harmonics due to Koornwinder,
Folland and other authors, we~obtain expansions for the Szeg\"o and the weighted Bergman
kernels of $M$-harmonic functions, i.e.~functions annihilated by the invariant Laplacian
on the unit ball of the complex $n$-space. This yields, among others, an explicit formula
for the \Mh Szeg\"o kernel in terms of multivariable as well as single-variable
hypergeometric functions, and also shows that most likely there is no explicit (``closed'')
formula for the corresponding weighted Bergman kernels.
\end{abstract}

\maketitle

\section{Introduction}\label{sec1}
Recall that a function on the unit ball $\Bn$ of $\CC^n$, $n\ge1$, is~called \emph{Moebius-harmonic}
(or~\emph{invariantly harmonic}), or \emph{\Mh }for short, if~it is annihilated by the invariant Laplacian
\[ \td = 4(1-|z|^2) \sum_{j,k=1}^n (\delta_{jk}-z_j\oz_k)\frac{\partial^2}{\partial z_j\partial\oz_k}.
 \label{tTA} \]
It~is well known (see e.g.~Rudin~\cite{Ru}, Stoll~\cite{Stoll}, or Chapter~6 in Krantz~\cite{KraPDECA}) that
$\td$ commutes with biholomorphic self-maps (Moebius maps) of the ball:
$$ \td(f\circ\phi) = (\td f)\circ\phi, \qquad \forall f\in C^2(\Bn), \phi\in\operatorname{Aut}(\Bn); $$
and, accordingly, that \Mh functions possess the \emph{invariant mean-value property}: namely,
if $\td f=0$ and $z\in\Bn$, then $f(z)$ equals the mean value, with respect to the $\operatorname{Aut}(\Bn)$-invariant
measure $d\tau(z)=(1-|z|^2)^{-n-1}\,dz$, over any Moebius ball in $\Bn$ centered at~$z$ (and similarly
for spheres in the place of balls). It~follows by a standard argument that the point evaluations
$f\mapsto f(z)$ at any $z\in\Bn$ are continuous linear functionals on the subspace
$$ L^2\mh(\Bn) := \{f\in L^2(\Bn): f \text{ is $M$-harmonic}\}  $$
of all \Mh functions in $L^2(\Bn)$ (the \emph{\Mh Bergman space}), and therefore there exists a reproducing
kernel for $L^2\mh(\Bn)$ (the~\emph{\Mh Bergman kernel}), namely a function $K(x,y)$ on $\Bn\times\Bn$,
\Mh in both variables and such that
$$ f(x) = \int_\Bn f(y) K(x,y) \, dy \qquad \forall x\in\Bn, \forall f\in L^2\mh(\Bn).  $$
More generally, for any $s>-1$, one can consider the \emph{weighted \Mh Bergman space}
\[ L^2\mh(\Bn,(1-|z|^2)^s\,dz)  \label{tTB}  \]
(where $dz$ stands, throughout, for the Lebesgue measure of the appropriate dimension) and its
reproducing kernel $K_s(x,y)$ on $\Bn\times\Bn$ (the \emph{weighted \Mh Bergman kernel}).

For the analogous weighted Bergman spaces of {\sl holomorphic}, rather than $M$-harmonic, functions on~$\Bn$,
the reproducing kernels have been known explicitly for a long time: one~has
\[ K_s\hol(x,y) = \frac{\Gamma(n+s+1)}{\Gamma(s+1)\pi^n} (1-\spr{x,y})^{-n-s-1} . \label{tTC}  \]
Similarly, there are formulas expressing the {\sl harmonic} weighted Bergman
kernels $K_s\harm(x,y)$, $s>-1$, on~$\Bn$ in terms of Appell's hypergeometric function~$F_1$
of two variables \cite[Section~3]{Blaschke}:
\[ \begin{aligned}
K_s\harm(x,y) &= \frac{\Gamma(n+s+1)}{\Gamma(s+1)\pi^n} \FE1{n+s+1;n-1,n-1}{n-1}{z,\oz}, \\
&\hskip6em z=\spr{x,y}+i\sqrt{|x|^2|y|^2-|\spr{x,y}|^2} . \end{aligned}
 \label{tTD}   \]
For $n=1$, the unit ball $\Bn$ becomes just the unit disc $\DD:=\{z\in\CC:|z|<1\}$, and \eqref{tTA} reduces
to the multiple $(1-|z|^2)^2\Delta$ of the ordinary Laplacian~$\Delta$; thus \Mh and harmonic functions
coincide for $n=1$. Also, any harmonic function on $\DD$ can be written as $f+\overline g$ with
$f,g$ holomorphic and $g(0)=0$. Since holomorphic and conjugate-holomorphic functions are orthogonal
in any $L^2(\DD,(1-|z|^2)^s\,dz)$ except for the constants, it~follows that for $n=1$
$$ K_s = K_s\harm = 2\Re K_s\hol - \frac{\Gamma(n+s+1)}{\Gamma(s+1)\pi^n} .  $$
However, no~explicit formula seems to be available for the \Mh Bergman kernels on $\Bn$ for $n>1$.

If~we multiply the measures in \eqref{tTB} by the factor $2(s+1)$ and let $s\searrow-1$, it~is easily shown
that $2(s+1)(1-|z|^2)^s\,dz$ converges weakly to~$d\sigma$, the (unnormalized) surface measure on the
topological boundary $\pBn$ of~$\Bn$. As~a~limit of the weighted Bergman spaces \eqref{tTB} we thus obtain
the \emph{\Mh Hardy space} $H^2\mh(\Bn)$ of \Mh functions on~$\Bn$, whose reproducing kernel $K\Sz(x,y)$
--- the \emph{\Mh Szeg\"o kernel} --- is~the function on $\Bn\times\Bn$, \Mh in both variables, which satisfies
\[ f(x) = \int_\pBn f(\zeta) K\Sz(x,\zeta) \,d\sigma(\zeta), \qquad \forall f\in H^2\mh(\pBn),
 \forall x\in\Bn, \label{tTE}  \]
where, abusing the notation slightly, we~denote by the same letter $f$ also the radial boundary values
of $f$ on~$\pBn$, and similarly for $K\Sz(x,\cdot)$. In~the holomorphic case, one~again has the
explicit formula
\[ K\Sz\hol(x,y) = \frac{\Gamma(n)}{2\pi^n} (1-\spr{x,y})^{-n}  \label{tTL}  \]
for the ordinary Szeg\"o kernel of~$\Bn$, and similarly from \eqref{tTD} we get the \emph{harmonic Szeg\"o kernel}
\[ K\Sz\harm(x,y) = \frac{\Gamma(n)}{2\pi^n} \frac{1-|x|^2|y|^2}{(1-2\Re\spr{x,y}+|x|^2|y|^2)^n} \label{tTF} \]
for the harmonic case.

The harmonic and \Mh Szeg\"o kernels are intimately connected with the associated \emph{Poisson kernels}.
Namely, recall that the ordinary Poisson kernel
$$ P\harm(x,\zeta) = \frac{\Gamma(n)}{2\pi^n} \frac{1-|x|^2}{|x-\zeta|^{2n}}  $$
on $\Bn$ reproduces the values of a harmonic function in
the interior of $\Bn$ from its boundary values:
$$ f(x) = \int_\pBn f(\zeta) P\harm(x,\zeta) \,d\sigma(\zeta), \qquad\forall x\in\Bn,   $$
for any function $f$ harmonic on $\Bn$ and, say, continuous on the closure~$\overline{\Bn}$.
Comparing this with the (harmonic version~of) \eqref{tTE}, we~thus see that $P\harm(x,\cdot)$
are just the boundary values of $K\Sz\harm(x,\cdot)$; that~is, $K\Sz\harm(x,\cdot)$ is just
the harmonic extension of $P\harm(x,\cdot)$ from $\pBn$ into~$\Bn$,~or
$$ K\harm\Sz(x,y) = \int_\pBn P\harm(x,\zeta) P\harm(y,\zeta) \,d\sigma(\zeta).  $$
In~other words, $K\Sz\harm$ is just $P\harm$ extended from $\Bn\times\pBn$ to a function on
$\Bn\times\Bn$ harmonic in both variables.

Exactly the same argument shows that also the \Mh Poisson kernel
(called \emph{Poisson-Szeg\"o kernel} in~\cite{KraPDECA})
$$ P(x,\zeta) = \frac{\Gamma(n)}{2\pi^n} \frac{(1-|x|^2)^n}{|1-\spr{x,\zeta}|^{2n}}  $$
(cf.~\cite[Chapter~5]{Stoll}), which reproduces any function $f$ \Mh on $\Bn$ and continuous
on $\overline{\Bn}$ from its boundary values:
\[ f(x) = \int_\pBn f(\zeta) P(x,\zeta) \,d\sigma(\zeta), \qquad\forall x\in\Bn,  \label{tTG}  \]
is~just the boundary value of $K\Sz(x,y)$ as $y\to\zeta$; that~is, $K\Sz(x,\cdot)$ is just
the \Mh extension of $P(x,\cdot)$ from $\pBn$ into~$\Bn$,~or
\[ K\Sz(x,y) = \frac{\Gamma(n)^2}{4\pi^{2n}} \int_\pBn \frac{(1-|x|^2)^n(1-|y|^2)^n}
 {|1-\spr{x,\zeta}|^{2n}|1-\spr{y,\zeta}|^{2n}} \,d\sigma(\zeta).  \label{tTH}  \]
For $n=1$, this is easily evaluated~to
\[ K\Sz(x,y) = \frac1{2\pi} \frac{1-|x|^2|y|^2}{|1-\spr{x,y}|^2},  \label{tTI}  \]
however, again, nothing seems to be known for $n\ge2$.

The~aim of this paper~is, firstly, to~give an explicit formula for the \Mh Szeg\"o kernel
$K\Sz(x,y)$ for any~$n$, in~terms of certain hypergeometric functions; and secondly,
to~give a series expansion for $K_s(x,y)$, for any $n$ and any $s>-1$. This series expansion
is sufficient to show that, on~the one hand, there is probably no explicit formula for
$K_s$ when $n\ge2$ (even for $s=0$, i.e.~in the case of the unweighted \Mh Bergman space);
and, on~the other hand, to~give at least a rough idea of what is the singularity of $K_s(x,y)$
when both $x$ and $y$ approach the boundary~$\pBn$.

To~describe our results, we~recall some facts about hypergeometric functions. The~ordinary
(Gauss) hypergeometric function ${}_2\!F_1$ of one variable is defined~by
$$ \FF21{a,b}cz = \sum_{j=0}^\infty \frac{(a)_j(b)_j}{(c)_j} \, \frac{z^j}{j!},
 \qquad |z|<1 .  $$
Here $c\notin\{0,-1,-2,\dots\}$ while $a,b$ can be any complex numbers, and
$$ (a)_j :=a(a+1)\dots(a+j-1) = \frac{\Gamma(a+j)}{\Gamma(a)}  $$
stands for the Pochhammer symbol (raising factorial). We~have also already met in~\eqref{tTD}
the Appell function $F_1$ of two variables, defined~by
$$ \FE1{a;b_1,b_2}c{x,y} = \sum_{j,k=0}^\infty \frac{(a)_{j+k}(b_1)_j(b_2)_k}{(c)_{j+k}}
 \frac{x^j}{j!} \frac{y^k}{k!}, \qquad |x|<1, |y|<1.  $$
The~hypergeometric function $FD_1$ of four variables
\[ \begin{aligned}
& \FD{a,a',b_1,b_2}c{x_1,x_2,y_1,y_2} = \\
&\hskip2em \sum_{i_1,i_2,j_1,j_2=0}^\infty
 \frac{(a)_{i_1+i_2}(a')_{j_1+j_2}(b_1)_{i_1+j_1}(b_2)_{i_2+j_2}}{(c)_{i_1+i_2+j_1+j_2}}
 \frac{x_1^{i_1}}{i_1!} \frac{x_2^{i_2}}{i_2!} \frac{y_1^{j_1}}{j_1!} \frac{y_2^{j_2}}{j_2!}
 \end{aligned} \label{tTJ}  \]
has been denoted $K_{16}(a,b_1,b_2,a';c;x_1,x_2,y_1,y_2)$ in Exton~\cite[p.~78]{ExtB} and
$K^*_{16}(a,b_1,$ $a',b_2;c;x_2,x_1,y_1,y_2)$ in~\cite{ExtP}; our notation follows Karlsson~\cite{Karls}.
The~series \eqref{tTJ} converges for $x_1,x_2,y_1,y_2\in\DD$.

Our~main results are the following.

\begin{theorem*}[Theorem~\ref{PB}] For any $\alpha,\beta,\gamma,\delta\in\CC$ and $n\ge1$,
\begin{align*}
& \int_\pBn (1-\spr{z,\zeta})^{-\alpha} (1-\spr{\zeta,z})^{-\beta}
 (1-\spr{w,\zeta})^{-\gamma} (1-\spr{\zeta,w})^{-\delta} \,d\sigma(\zeta)  \\
&\hskip4em = \frac{2\pi^n}{\Gamma(n)} \FD{\beta,\delta,\alpha,\gamma}n{|z|^2,\spr{z,w},\spr{w,z},|w|^2} .
\end{align*}
\end{theorem*}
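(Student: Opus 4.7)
\emph{Plan.} The strategy is to expand each of the four factors in the integrand as a binomial series, swap summation and integration (legitimate for $|z|,|w|$ small, with the result extending by analyticity to all of $\Bn\times\Bn$), and reduce everything to evaluating the spherical moments
\[ J(i_1,j_1,i_2,j_2) := \int_\pBn \spr{z,\zeta}^{i_1}\spr{w,\zeta}^{i_2}\spr{\zeta,z}^{j_1}\spr{\zeta,w}^{j_2}\,d\sigma(\zeta). \]
Since the integrand is antiholomorphic in $\zeta$ of degree $i_1+i_2$ and holomorphic of degree $j_1+j_2$, $J$ vanishes unless $i_1+i_2=j_1+j_2=:k$.

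The matched moments will be extracted from the master identity
\[ \int_\pBn \spr{U,\zeta}^k\,\spr{\zeta,V}^k\,d\sigma(\zeta) = \frac{2\pi^n k!}{\Gamma(n+k)}\spr{U,V}^k, \]
itself proved by multinomial expansion together with the elementary monomial formula $\int_\pBn \zeta^\mu\overline\zeta^\nu\,d\sigma = \delta_{\mu\nu}\,2\pi^n\mu!/\Gamma(n+|\mu|)$. To disentangle the four indices from a single $k$, I would introduce auxiliary scalars $s,t,u,v$: the antiholomorphic product $\spr{z,\zeta}^{i_1}\spr{w,\zeta}^{i_2}$ is (up to a multinomial factor) the $s^{i_1}t^{i_2}$-coefficient of $\spr{sz+tw,\zeta}^k$, and similarly the holomorphic product is a coefficient in $\spr{\zeta,\bar uz+\bar vw}^k$. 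Thus $J$ is the $s^{i_1}t^{i_2}u^{j_1}v^{j_2}$-coefficient (with a known combinatorial prefactor) of
\[ \frac{2\pi^n k!}{\Gamma(n+k)}\bigl(su|z|^2+sv\spr{z,w}+tu\spr{w,z}+tv|w|^2\bigr)^k, \]
expanded by the multinomial theorem.

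The final step is reindexing. The four multinomial exponents $a,b,c,d$ (counting respectively the factors of $|z|^2$, $\spr{z,w}$, $\spr{w,z}$, $|w|^2$) satisfy $i_1=a+b$, $i_2=c+d$, $j_1=a+c$, $j_2=b+d$ and range freely over the nonnegative integers. Substituting the extracted $J$ back into the fourfold series expansion of the original integral, the constraint $i_1+i_2=j_1+j_2$ disappears in favour of an unrestricted sum over $(a,b,c,d)$; the factor $k!/\Gamma(n+k)$ combines with $\Gamma(n)$ to produce $(n)_{a+b+c+d}$ in the denominator; and the Pochhammer symbols $(\alpha)_{i_1}(\beta)_{j_1}(\gamma)_{i_2}(\delta)_{j_2}$ recombine into the crossed subscript pattern characteristic of $FD_1$, with the variables $|z|^{2a}\spr{z,w}^b\spr{w,z}^c|w|^{2d}$ attached.

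The main obstacle is essentially bookkeeping: matching the subscript pattern $\{a+b,\,a+c,\,c+d,\,b+d\}$ of the four Pochhammer factors produced by the calculation with the pairings $(i_1+i_2),(j_1+j_2),(i_1+j_1),(i_2+j_2)$ in the definition of $FD_1$, so that the parameters $\alpha,\beta,\gamma,\delta$ emerge in the order $\beta,\delta,\alpha,\gamma$ attached to the variables $|z|^2,\spr{z,w},\spr{w,z},|w|^2$ as claimed. Everything else --- the justification of term-by-term integration, the moment formula, and the multinomial bookkeeping --- is routine.
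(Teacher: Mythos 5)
Your approach is valid and cleaner than the paper's. Both methods start from the monomial formula $\int_\pBn\zeta^\nu\ozeta^\mu\,d\sigma=\delta_{\mu\nu}\,\frac{2\pi^n\mu!}{\Gamma(n+|\mu|)}$ and a fourfold binomial expansion, but the paper first performs an explicit unitary rotation sending $z\mapsto|z|e_1$, $w\mapsto be_1+ce_2$, multinomial-expands the last two factors, and then carries out a long chain of reindexings (a case split $p\ge q$ versus $p<q$, a Chu--Vandermonde evaluation, several substitutions). Your generating-function device --- packaging the mixed moments into Taylor coefficients of $\spr{sz+tw,\zeta}^k\,\spr{\zeta,\overline uz+\overline vw}^k$ and reducing everything to one master identity --- reaches the same endpoint without the rotation, and the final reindexing is just the bijection $(i_1,i_2,j_1,j_2)=(a+b,\,c+d,\,a+c,\,b+d)$.

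There is, however, a concrete mismatch in the final bookkeeping step which you wave at but do not carry out. Tracing your own plan literally, the integral equals
\[
\frac{2\pi^n}{\Gamma(n)}\sum_{a,b,c,d\ge0}
\frac{(\alpha)_{a+b}(\beta)_{a+c}(\gamma)_{c+d}(\delta)_{b+d}}{(n)_{a+b+c+d}}\,
\frac{|z|^{2a}\,\spr{z,w}^b\,\spr{w,z}^c\,|w|^{2d}}{a!\,b!\,c!\,d!},
\]
and matching this against \eqref{tTJ} via $(i_1,i_2,j_1,j_2)=(a,b,c,d)$, $(x_1,x_2,y_1,y_2)=(|z|^2,\spr{z,w},\spr{w,z},|w|^2)$ gives $\frac{2\pi^n}{\Gamma(n)}\FD{\alpha,\gamma,\beta,\delta}n{|z|^2,\spr{z,w},\spr{w,z},|w|^2}$: the parameters emerge in the order $\alpha,\gamma,\beta,\delta$, not $\beta,\delta,\alpha,\gamma$ as you assert. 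This is not an error in your method. The coefficient of $\spr{z,w}$ in the integral is $\frac{2\pi^n}{\Gamma(n)}\cdot\frac{\alpha\delta}{n}$ (the cross term of the $\alpha$- and $\delta$-expansions), whereas the printed right-hand side of Theorem~\ref{PB} gives $\frac{2\pi^n}{\Gamma(n)}\cdot\frac{\beta\gamma}{n}$; so the printed statement has $\spr{z,w}$ and $\spr{w,z}$ transposed. The slip in the paper's proof occurs in the line preceding \eqref{tWC}: since $\spr{z,w}=\spr{Uz,Uw}=|z|\,\overline{(Uw)_1}$, one actually has $b=(Uw)_1=\spr{w,z}/|z|$, hence $\oa b=\spr{w,z}$ rather than $\spr{z,w}$. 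Using the symmetry \eqref{tWM}, the corrected statement may also be written $\frac{2\pi^n}{\Gamma(n)}\FD{\beta,\delta,\alpha,\gamma}n{|z|^2,\spr{w,z},\spr{z,w},|w|^2}$; with $\alpha=\beta=\gamma=\delta=n$ the two readings coincide, so Corollary~\ref{PC} and the downstream results are unaffected. In short: your route is sound and proves the correct identity, but that identity differs from the misprinted one, and your closing sentence should say that the parameters emerge in the order $\alpha,\gamma,\beta,\delta$.
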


\begin{corollary*}[Corollary~\ref{PC}] For any $n\ge1$,
\[ K\Sz(z,w) = \frac{\Gamma(n)}{2\pi^n} (1-|z|^2)^n(1-|w|^2)^n
 \FD{n,n,n,n}n{|z|^2,\spr{z,w},\spr{w,z},|w|^2} . \label{tTS} \]
\end{corollary*}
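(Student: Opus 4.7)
The plan is to derive the corollary directly from Theorem~\ref{PB} by applying it to the integral representation \eqref{tTH} of the $M$-harmonic Szeg\"o kernel. Recall that
\[
 K\Sz(z,w) = \frac{\Gamma(n)^2}{4\pi^{2n}} \int_\pBn \frac{(1-|z|^2)^n(1-|w|^2)^n}{|1-\spr{z,\zeta}|^{2n}|1-\spr{w,\zeta}|^{2n}} \,d\sigma(\zeta),
\]
and that for $\zeta\in\pBn$ we have the factorizations
\[
 |1-\spr{z,\zeta}|^{2n} = (1-\spr{z,\zeta})^n(1-\spr{\zeta,z})^n,
 \qquad
 |1-\spr{w,\zeta}|^{2n} = (1-\spr{w,\zeta})^n(1-\spr{\zeta,w})^n.
\]
Thus the integrand is exactly of the form treated in the theorem, with $\alpha=\beta=\gamma=\delta=n$.

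The remaining step is then purely arithmetical: I would pull the factor $(1-|z|^2)^n(1-|w|^2)^n$ outside the integral, invoke Theorem~\ref{PB} to replace the integral by
\[
 \frac{2\pi^n}{\Gamma(n)} \FD{n,n,n,n}n{|z|^2,\spr{z,w},\spr{w,z},|w|^2},
\]
and simplify the prefactor $\frac{\Gamma(n)^2}{4\pi^{2n}} \cdot \frac{2\pi^n}{\Gamma(n)} = \frac{\Gamma(n)}{2\pi^n}$ to obtain the stated expression. There is no genuine obstacle at this stage, since Theorem~\ref{PB} is exactly calibrated to handle integrals with four such $(1-\spr{\cdot,\cdot})^{-\mathrm{const}}$ factors on $\pBn$; the corollary is simply the specialization of that result to the Poisson--Szeg\"o setting. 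The real work has already been done in proving Theorem~\ref{PB}, whose proof presumably relies on expanding each factor in a binomial/hypergeometric series and integrating term by term using the orthogonality of unitary spherical harmonics (bihomogeneous spaces $\chpq$) on $\pBn$, which organizes the four-fold sum into the $FD_1$ series \eqref{tTJ}.
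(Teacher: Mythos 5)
Your proof is correct and is exactly the paper's argument: the paper states that Corollary~\ref{PC} ``is immediate from~\eqref{tTH}'' by specializing Theorem~\ref{PB} to $\alpha=\beta=\gamma=\delta=n$, which is precisely what you do, including the constant simplification $\frac{\Gamma(n)^2}{4\pi^{2n}}\cdot\frac{2\pi^n}{\Gamma(n)}=\frac{\Gamma(n)}{2\pi^n}$.
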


The~last right-hand side can be expressed in terms of ordinary ${}_2\!F_1$ functions.

\begin{theorem*}[Theorem~\ref{PD}] For any $n\ge1$, $K\Sz(z,w)$ equals
\begin{align*}
& \frac{\Gamma(n)}{2\pi^n} \frac{(1-|w|^2)^n}{|1-\spr{z,w}|^{2n}} \sum_{i_1=0}^n \sum_{i_2,j_1=0}^{n-i_1}
 \frac{(-n)_{i_1+i_2} (-n)_{i_1+j_1} (n)_{i_2} (n)_{j_1}} {i_1!i_2!j_1!(n)_{i_1+i_2+j_1}} \\
&\hskip6em \;\times t_1^{i_1} t_2^{i_2} t_3^{j_1} \FF21{i_2+n,j_1+n}{i_1+i_2+j_1+n}{t_4},
\end{align*}
where
$$ t_1=|z|^2, \quad t_2=\frac{|z|^2-\spr{w,z}}{1-\spr{w,z}}, \quad t_3=\overline{t_2},
\quad t_4=1-\frac{(1-|z|^2)(1-|w|^2)}{|1-\spr{z,w}|^2} .  $$
\end{theorem*}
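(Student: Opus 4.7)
The plan is to perform a Möbius change of variable in the integral~\eqref{tTH} for $K\Sz$, so as to reduce matters to a second application of Theorem~\ref{PB} with a different choice of parameters. Concretely, I substitute $\zeta=\phi_z(\eta)$, where $\phi_z\in\operatorname{Aut}(\Bn)$ is the standard involution with $\phi_z(0)=z$, and combine the three standard identities
\begin{equation*}
1-\spr{z,\phi_z(\eta)}=\frac{1-|z|^2}{1-\spr{z,\eta}},\quad 1-\spr{w,\phi_z(\eta)}=\frac{(1-\spr{w,z})(1-\spr{b,\eta})}{1-\spr{z,\eta}},\quad d\sigma(\phi_z(\eta))=\frac{(1-|z|^2)^n\,d\sigma(\eta)}{|1-\spr{z,\eta}|^{2n}},
\end{equation*}
with $b:=\phi_z(w)$. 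After the resulting cancellations, \eqref{tTH} takes the form
\begin{equation*}
K\Sz(z,w)=\frac{\Gamma(n)^2}{4\pi^{2n}}\cdot\frac{(1-|w|^2)^n}{|1-\spr{z,w}|^{2n}}\int_\pBn\frac{|1-\spr{z,\eta}|^{2n}}{|1-\spr{b,\eta}|^{2n}}\,d\sigma(\eta).
\end{equation*}

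The remaining integral is again of the form covered by Theorem~\ref{PB}, this time with $\alpha=\beta=-n$, $\gamma=\delta=n$ (and with $w$ replaced by~$b$); so it equals $\frac{2\pi^n}{\Gamma(n)}\FD{-n,n,-n,n}n{|z|^2,\spr{z,b},\spr{b,z},|b|^2}$. The identity $1-|\phi_z(w)|^2=(1-|z|^2)(1-|w|^2)/|1-\spr{z,w}|^2$ gives $|b|^2=t_4$, while a~direct computation from the explicit formula $\phi_z(w)=(z-P_zw-s_zQ_zw)/(1-\spr{w,z})$, using the sesquilinear rule $\spr{u,\lambda v}=\overline\lambda\spr{u,v}$ and $\spr{z,P_zw}=\spr{z,w}$, $\spr{z,Q_zw}=0$, yields $\spr{z,b}=(|z|^2-\spr{z,w})/(1-\spr{z,w})=t_3$ and, by complex conjugation, $\spr{b,z}=t_2$. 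Together with $|z|^2=t_1$ this identifies the $FD_1$ as $\FD{-n,n,-n,n}n{t_1,t_3,t_2,t_4}$.

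Finally I expand this $FD_1$ as a quadruple series and collapse the summation over the index $j_2$ paired with~$t_4$. Factoring $(n)_{j_1+j_2}=(n)_{j_1}(n+j_1)_{j_2}$, $(n)_{i_2+j_2}=(n)_{i_2}(n+i_2)_{j_2}$, and $(n)_{i_1+i_2+j_1+j_2}=(n)_{i_1+i_2+j_1}(n+i_1+i_2+j_1)_{j_2}$ turns the $j_2$-series into $\FF21{n+j_1,n+i_2}{n+i_1+i_2+j_1}{t_4}$; the Pochhammer factors $(-n)_{i_1+i_2}$ and $(-n)_{i_1+j_1}$ truncate the remaining triple sum to $i_1\le n$ and $i_2,j_1\le n-i_1$. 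A~final relabeling $i_2\leftrightarrow j_1$ converts $t_3^{i_2}t_2^{j_1}$ into $t_2^{i_2}t_3^{j_1}$, leaves the Pochhammer coefficients invariant, and does not alter the inner ${}_2F_1$ (whose upper two parameters may be swapped freely), producing exactly the formula of the theorem. The~main obstacle is the careful bookkeeping inside the Möbius substitution; in particular, the sesquilinear convention is what routes $\spr{z,b}$ to $t_3$ rather than to~$t_2$, and without this swap the relabeling trick at the end would not match the stated expression.
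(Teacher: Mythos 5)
Your proof is correct, and it takes a genuinely different route from the paper's. The paper derives the theorem from Corollary~\ref{PC} purely formally: it applies the Euler-type transformation \eqref{tWL} for $FD_1$ together with the symmetry \eqref{tWM}, obtaining the intermediate form \eqref{tWP}, and then collapses the $j_2$-summation. You instead go back to the integral representation \eqref{tTH}, perform the M\"obius substitution $\zeta=\phi_z\eta$, and apply Theorem~\ref{PB} a second time with $\alpha=\beta=-n$, $\gamma=\delta=n$ and $w$ replaced by $b=\phi_z w$; the final $j_2$-collapse and relabeling $i_2\leftrightarrow j_1$ (legitimate by the symmetry \eqref{tWM}, since $a=b_1=-n$, $a'=b_2=n$) then recover the stated formula. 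Each of the three bookkeeping identities you use checks out: the first two follow from \eqref{tTT} (with $w_1=0$, resp.\ with $w=\phi_z b$ and $1-\spr{b,z}=(1-|z|^2)/(1-\spr{w,z})$), and the Jacobian $d\sigma(\phi_z\eta)=(1-|z|^2)^n|1-\spr{z,\eta}|^{-2n}\,d\sigma(\eta)$ is the standard invariance formula for $d\sigma$ under M\"obius maps (equivalent to $\int_\pBn P(z,\zeta)\,d\sigma(\zeta)=1$). The computations $|b|^2=t_4$, $\spr{z,b}=t_3$, $\spr{b,z}=t_2$ are also correct — you are indeed forced to land on $t_3$ rather than $t_2$ because of the sesquilinear convention, exactly as you flag.

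What your approach buys: it makes the appearance of the arguments $t_1,\dots,t_4$ conceptually transparent, identifying them a~priori as $|z|^2$, $\spr{z,\phi_z w}$, $\spr{\phi_z w,z}$, $|\phi_z w|^2$ — which is precisely the geometric reading the paper supplies only \emph{a~posteriori} in the discussion following \eqref{tTQ}. The trade-off is that you need the sphere Jacobian of $\phi_z$ as an external input, whereas the paper's route needs the $FD_1$ Euler transformation \eqref{tWL} from Karlsson's monograph; in either case Theorem~\ref{PB} does the heavy lifting. Both are valid, and yours is arguably the more illuminating argument.
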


Note that for $z,w\in\Bn$ we have $t_2,t_3\in\DD$, while $0\le t_1,t_4<1$.
Note also that using standard formulas for hypergeometric functions (cf.~\eqref{tWU} in Section~\ref{sec3}),
the last ${}_2\!F_1$ are actually expressible in the form $a(t_4)+b(t_4)\log(1-t_4)$
with rational functions $a,b$.

Both the holomorphic Szeg\"o kernel \eqref{tTL} and the harmonic Szeg\"o kernel \eqref{tTF}
are clearly smooth functions on the closure $\overline{\Bn\times\Bn}$ except for the boundary
diagonal $\operatorname{diag}\pBn=\{(x,y)\in\pBn\times\pBn: x=y\}$. This is no longer the case
for the \Mh Szeg\"o kernel.

\begin{proposition*}[Proposition~\ref{PE}] For $n>1$,
$$ K\Sz\in C^{n-1}(\bbd) \setminus C^n(\bbd).  $$
\end{proposition*}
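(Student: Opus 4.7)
The plan is to read off the stated regularity from the closed-form expression in Theorem~\ref{PD}. Introduce the quantity
\[
u := 1 - t_4 = \frac{(1-|z|^2)(1-|w|^2)}{|1-\spr{z,w}|^2},
\]
which is a smooth $[0,1]$-valued function on $\bbd$ that vanishes exactly on the set $\big((\pBn\times\overline\Bn)\cup(\overline\Bn\times\pBn)\big)\cap\bbd$. The variables $t_1,t_2,t_3$ and the prefactor $(1-|w|^2)^n/|1-\spr{z,w}|^{2n}$ appearing in the formula all extend smoothly to $\bbd$, so the regularity of $K\Sz$ there is governed entirely by the behaviour of the factors $\FF21{i_2+n,j_1+n}{i_1+i_2+j_1+n}{1-u}$ as $u\downarrow 0$.

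In every such Gauss function the parameters satisfy $c-a-b = i_1-n\in\{-n,\dots,-1,0\}$, so Kummer's logarithmic connection formula applies and yields
\[
\FF21{a,b}{c}{1-u} = u^{-(n-i_1)} P_{i_1 i_2 j_1}(u) + R_{i_1 i_2 j_1}(u) + S_{i_1 i_2 j_1}(u)\log u,
\]
with $P$ a polynomial of degree at most $n-i_1-1$ and $R$, $S$ analytic in $u$ at~$0$. Substituting back, using the identity $u^n = (1-|z|^2)^n(1-|w|^2)^n/|1-\spr{z,w}|^{2n}$ to convert the polar factors $u^{-(n-i_1)}$ into $(1-|z|^2)^{i_1-n}$ times smooth quantities, and collecting terms, one should obtain a rearrangement
\[
K\Sz(z,w) = \Phi(z,w) + \Psi(z,w)\, u(z,w)^n \log u(z,w),
\]
with $\Phi, \Psi$ smooth on $\bbd$. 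The cancellation of all the singular $(1-|z|^2)^{i_1-n}$ contributions (for $i_1<n$) and of the sub-leading $u^k\log u$ terms (for $0\le k<n$) is the main combinatorial identity to verify, reducing to finite sums of products of Pochhammer symbols. It is, moreover, forced in advance by the a priori continuity of $K\Sz$ on $\bbd$, which follows immediately from the integral representation~\eqref{tTH}.

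Since $u^n\log u$ lies in $C^{n-1}\setminus C^n$ at $u=0$, the decomposition above yields $K\Sz\in C^{n-1}(\bbd)$ at once. For the failure of $C^n$ regularity I would exhibit a point of $\bbd$ where $\Psi$ is nonzero: fixing a nonzero $w\in\Bn$ and letting $z$ tend to $z_0\in\pBn\setminus\{w/|w|\}$, the value $\Psi(z_0,w)$ reduces to an explicit finite combination of Pochhammer-symbol expressions, which one verifies to be nonzero by direct calculation. The principal obstacle is the bookkeeping required to carry out the above cancellations within the triple sum of Theorem~\ref{PD}; once the $u^n\log u$ decomposition is in hand, the $C^{n-1}\setminus C^n$ conclusion is immediate.
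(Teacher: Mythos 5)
Your plan follows the same route as the paper: start from the explicit formula of Theorem~\ref{PD}, apply the logarithmic connection formula for ${}_2\!F_1$ at $z=1$ (the paper's version is \cite[\S2.10(14)]{BE}, yours Kummer's), and reorganize to extract a $C^{n-1}$ piece plus a $u^n\log u$-type singularity. The final decomposition you claim is indeed equivalent to what the paper obtains in (and after) \eqref{tWT}. However, your justification of the decomposition has a genuine gap: you assert that the cancellation of the pole terms $u^{-(n-i_1)}$ \emph{and} of the sub-leading $u^k\log u$ terms for $0\le k<n$ is ``forced in advance by the a priori continuity of $K\Sz$.'' Continuity does force the $u^{-(n-i_1)}$ poles (for $i_1<n$) and the $k=0$ term $\log u$ to drop out, since those are unbounded as $u\downarrow0$. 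But $u^k\log u$ is \emph{continuous} for every $k\ge1$; it is merely not $C^k$. So continuity provides no information at all about the presence or absence of $u\log u,\dots,u^{n-1}\log u$, and without eliminating them you only get $C^0$ regularity, not $C^{n-1}$. The paper resolves exactly this difficulty not by a combinatorial verification but by invoking the symmetry $K\Sz(z,w)=K\Sz(w,z)$: the raw expansion naturally has the $\log$ coefficient vanishing to order $n$ in $(1-y_2)$ only, and the $z\leftrightarrow w$ symmetry forces the matching vanishing to order $n$ in $(1-x_1)$, which is what kills the sub-leading log terms. Either you supply that symmetry argument, or you must actually carry out the Pochhammer-symbol identity you defer; as written the proof does not close.

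A secondary gap: for $K\Sz\notin C^n$ you propose to exhibit a point where the coefficient $\Psi$ of $u^n\log u$ is nonzero ``by direct calculation,'' but the calculation is not performed (and the combination of Pochhammer symbols is not obviously nonzero a priori). The paper's argument for this half is cleaner and worth noting: integrate $K\Sz(z,R\xi)$ against $H^{pq}(\xi\cdot\bar\eta)$ over $\pBn$; by the orthogonality relations \eqref{tVD} and the expansion \eqref{tVJ} this yields $S^{pq}(|z|)S^{pq}(R)H^{pq}(\tfrac z{|z|}\cdot\bar\eta)$, and $S^{pq}(t)$ has a known $(1-t)^n\log(1-t)$ singularity at $t=1$, so is not $C^n$. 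This avoids any explicit evaluation of $\Psi$.
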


As~for the \Mh kernels $K_s$, $s>-1$, we~may actually consider more general measures
$d\mu\otimes d\sigma$ on $\Bn$ given~by
\[ \int_\Bn F(z) \,(d\mu\otimes d\sigma)(z) := \int_0^1 \int_\pBn F(\sqrt t \zeta)
 \, t^{n-1}\,d\mu(t) \,d\sigma(\zeta),  \label{tTK}  \]
where $d\mu$ is any finite Borel measure on the interval $[0,1]$ such that $1\in\operatorname{supp}d\mu$.
Denote by $L^2\mh(\Bn,d\mu\otimes d\sigma)$ the corresponding \Mh weighted Bergman space
and let $K_\mu$ be its reproducing kernel. The~spaces \eqref{tTB} and their kernels $K_s(x,y)$
thus correspond to the choice $d\mu(t)=\tfrac12(1-t)^s\,dt$.

\begin{theorem*}[Theorem~\ref{PF}] For any $n\ge1$ and $\mu$ as above, $K_\mu$ is given~by
\[ K_\mu(z,w) = \frac{\Gamma(n)}{2\pi^n} (1-|z|^2)^n (1-|w|^2)^n
 \sum_{p,q,j,m=0}^\infty A_{pqjm}(\mu)
 \frac{\spr{z,w}^p \spr{w,z}^q |z|^{2j} |w|^{2m}}{p!q!j!m!} ,  \label{tTN}  \]
where
\[ \begin{aligned}
A_{pqjm}(\mu) &:= \sum_{l=0}^{\min(m,j)}
 \frac{\Gamma(n+p+j)\Gamma(n+q+j)} {\Gamma(n)\Gamma(n+p+q+j+l)}
 \frac{\Gamma(n+p+m)\Gamma(n+q+m)} {\Gamma(n)\Gamma(n+p+q+m+l)} \\
&\hskip2em  \frac{(-1)^l \Gamma(n+p+q+l-1)(n+p+q+2l-1)(-j)_l(-m)_l}
   {\Gamma(n)l! c_{p+l,q+l}(\mu)}, \end{aligned}  \label{tTO}  \]
with
\[ c_{pq}(\mu) := \frac{\Gamma(p+n)^2\Gamma(q+n)^2} {\Gamma(n)^2\Gamma(p+q+n)^2}
 \int_0^1 t^{p+q+n-1} \FF21{p,q}{p+q+n}t ^2 \,d\mu(t).   \label{tTM}  \]
\end{theorem*}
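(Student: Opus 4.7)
The strategy is to decompose $L^2(\pBn,d\sigma)=\bigoplus_{p,q\ge0}\HH^{pq}$ into the $U(n)$-isotypic spherical-harmonic subspaces of bidegree $(p,q)$ and to $M$-harmonically extend each piece. Writing $\tilde h(z)=\phi(|z|^2)h(z)$ for $h\in\HH^{pq}$ (extended bihomogeneously to $\CC^n$) and using the Euler relations $\sum_j z_j\partial h/\partial z_j=ph$, $\sum_j\oz_j\partial h/\partial\oz_j=qh$ together with harmonicity of $h$, the equation $\td\tilde h=0$ reduces, with $t=|z|^2$, to the Gauss hypergeometric equation
$$t(1-t)\phi''+[n+p+q-(p+q+1)t]\phi'-pq\,\phi=0,$$
whose unique solution regular at $t=0$ is $\mathcal{F}_{pq}(t):=\FF21{p,q}{p+q+n}t$; by Gauss's summation $\mathcal{F}_{pq}(1)=\Gamma(n)\Gamma(p+q+n)/[\Gamma(p+n)\Gamma(q+n)]$.

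Using \eqref{tTK} together with the scaling $h(\sqrt t\zeta)=t^{(p+q)/2}h(\zeta)$, the norm of $\tilde h$ in $L^2\mh(\Bn,d\mu\otimes d\sigma)$ reads $\|\tilde h\|^2=c_{pq}(\mu)\,\mathcal{F}_{pq}(1)^2\,\|h\|^2_{L^2(\pBn)}$, the radial integral matching \eqref{tTM} exactly. Orthogonality of the $\HH^{pq}$ passes to their $M$-harmonic extensions, so assembling orthonormal bases gives
$$K_\mu(z,w)=\sum_{p,q\ge0}\frac{\mathcal{F}_{pq}(|z|^2)\mathcal{F}_{pq}(|w|^2)}{c_{pq}(\mu)\,\mathcal{F}_{pq}(1)^2}\,Z^{pq}(z,w),$$
where $Z^{pq}$ is the reproducing kernel of $\HH^{pq}$ in $L^2(\pBn)$ extended bihomogeneously to $\CC^n\times\CC^n$. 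Euler's transformation $\FF21{a,b}{c}t=(1-t)^{c-a-b}\FF21{c-a,c-b}{c}t$ (with $c-a-b=n$) then factors $\mathcal{F}_{pq}(t)=(1-t)^n\FF21{p+n,q+n}{p+q+n}t$, extracting the prefactor $(1-|z|^2)^n(1-|w|^2)^n$ of the statement.

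The remaining ingredient is an explicit expansion of $Z^{pq}$. By $U(n)$-invariance and the bidegree constraints ($(p,q)$ in $z$, $(q,p)$ in $w$), $Z^{pq}(z,w)$ is a linear combination of the $\min(p,q)+1$ monomials $|z|^{2l}|w|^{2l}\spr{z,w}^{p-l}\spr{w,z}^{q-l}$; imposing $\Delta_z Z^{pq}=0$ on this ansatz yields a two-term recursion for the coefficients, and the overall scale is determined by the reproducing property (matched via the known case $Z^{p0}(z,w)=\Gamma(n+p)\spr{z,w}^p/(2\pi^n p!)$ coming from the holomorphic Szeg\"o kernel expansion). The outcome is
$$Z^{pq}(z,w)=\sum_{l=0}^{\min(p,q)}\frac{(-1)^l(n+p+q-1)\Gamma(n+p+q-l-1)}{2\pi^n(p-l)!(q-l)!\,l!}\,|z|^{2l}|w|^{2l}\spr{z,w}^{p-l}\spr{w,z}^{q-l}.$$

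Finally, substituting this and the series $\FF21{p+n,q+n}{p+q+n}{|z|^2}=\sum_j(p+n)_j(q+n)_j|z|^{2j}/[(p+q+n)_j j!]$ (and analogously in $|w|^2$) into the formula for $K_\mu$ produces a quintuple sum over $(p,q,j,m,l)$. Reindexing by $P=p-l$, $Q=q-l$, $J=j+l$, $M=m+l$ moves $l$ to an inner summation bounded by $\min(J,M)$ (arising from $j,m\ge0$); the Gamma ratios in $\mathcal{F}_{pq}(1)^2$ cancel those emerging from the two hypergeometric expansions up to an overall $1/\Gamma(n)^2$, and the identity $J!/(J-l)!=(-1)^l(-J)_l$ (similarly for $M$) rewrites the leftover $J!M!/[(J-l)!(M-l)!]$ as $(-J)_l(-M)_l$. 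The coefficient of $\spr{z,w}^P\spr{w,z}^Q|z|^{2J}|w|^{2M}/(P!Q!J!M!)$, after multiplication by the overall $\Gamma(n)/(2\pi^n)$, then reduces precisely to the $A_{PQJM}(\mu)$ of \eqref{tTO}. The main obstacle is the combinatorial bookkeeping in this final reindexing and cancellation step, together with correctly pinning down the expansion of $Z^{pq}$; no further analytical input is required.
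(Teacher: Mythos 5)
Your proposal is correct and follows essentially the same route as the paper: decompose into the bigraded spherical harmonic spaces $\chpq$, extend each radially by the Gauss ${}_2F_1$ factor, compute the per-block norm constant $c_{pq}(\mu)$, apply the Euler transformation to extract $(1-|z|^2)^n(1-|w|^2)^n$, substitute the explicit zonal kernel of $\chpq$, and reindex the resulting quintuple sum by $(p-l,q-l,j+l,m+l)$ to produce $A_{pqjm}(\mu)$. The only small variation is that you propose to derive the zonal kernel $Z^{pq}$ from a $U(n)$-invariance ansatz plus harmonicity and a normalization, whereas the paper reads it off from the Jacobi-polynomial/${}_2F_1$ formula \eqref{tVE} — both give the same expansion, and the rest of the bookkeeping matches the paper's computation.
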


We~remark that for $\mu$ the unit mass at the point $t=1$, $L^2\mh(\Bn,d\mu\otimes d\sigma)$
reduces just to $H^2\mh(\Bn)$ and $K_\mu$ to~$K\Sz$, while one can then show that $c_{pq}(\mu)=1$
for all $p,q$ and $A_{pqjm}=(n)_{j+p}(n)_{j+q}(n)_{m+p}(n)_{m+q}/(n)_{m+j+p+q}$; thus the last
theorem recovers Corollary~\ref{PC} as its special case.

Denoting $c_{pq}(\mu)$ for $d\mu(t)=\tfrac12(1-t)^s\,dt$ by $c_{pq}(s)$, the last theorem
thus gives also a series expansion for the kernels~$K_s$, $s>-1$. We~will show that even
for $s=0$ and $n=2$ (i.e.~the unweighted \Mh Bergman space on~$\BB^2$), $c_{11}(s)/c_{00}(s)$ and,
hence, also $A_{0000}(s)/A_{1100}(s)$, is~of the form $a+b\zeta(3)$, where $a,b$ are nonzero
rational numbers and $\zeta$ stands for the Riemann zeta function. That~is, the Taylor coefficients
of $K_0$ on $\BB^2$ involve $\zeta(3)$ in a nontrivial way. This makes it pretty unlikely that~$K_0$,
and \emph{a fortiori} $K_s$ for general $s>-1$ and $n\ge2$, be~given by any ``nice'' explicit
formula in terms of e.g.~hypergeometric and similar functions.

The~coefficients $c_{pq}(s)$ can be expressed as certain multivariable hypergeometric functions
at unit argument; unfortunately, again these expressions do not seem to lend themselves to an
explicit evaluation. However, we~can at least describe the asymptotic behavior of $c_{pq}(s)$
for large $p,q$, which turns out to be sufficient for getting some idea about the boundary
behavior of the kernels~$K_s$.

\begin{theorem*}[Theorem~\ref{PG}] Let $p,q>0$ be fixed. Then as $\lambda\to+\infty$,
we~have the asymptotic expansion
$$ c_{\lambda p,\lambda q}(s) \approx \frac{\Gamma(2n+s+1)\Gamma(n+s+1)^2\Gamma(s+1)}
 {\Gamma(n)^2\Gamma(2n+2s+2)}
 \frac{\lambda^{-2s-2}}{(pq)^{s+1}} \sum_{j=0}^\infty \frac{a_j(p,q)}{\lambda^j}, $$
where $a_0(p,q)=1$.
\end{theorem*}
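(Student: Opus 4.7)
My plan is to apply a multivariate Laplace-type asymptotic analysis to a triple-integral representation of $c_{\lambda p,\lambda q}(s)$. Setting $d\mu(t)=\tfrac12(1-t)^s\,dt$ in~\eqref{tTM} and substituting the Euler integral representation
\[ \FF21{\lambda p,\lambda q}{\lambda(p+q)+n}t = \frac{\Gamma(\lambda(p+q)+n)}{\Gamma(\lambda q)\Gamma(\lambda p+n)}\int_0^1 r^{\lambda q-1}(1-r)^{\lambda p+n-1}(1-tr)^{-\lambda p}\,dr \]
into both factors of the squared $_2\!F_1$, many Gamma factors cancel and $c_{\lambda p,\lambda q}(s)$ takes the form of a triple integral
\[ \frac{\Gamma(\lambda q+n)^2}{2\Gamma(n)^2\Gamma(\lambda q)^2}\iiint_{[0,1]^3}(r_1 r_2)^{\lambda q-1}[(1-r_1)(1-r_2)]^{\lambda p+n-1}[(1-tr_1)(1-tr_2)]^{-\lambda p} t^{\lambda(p+q)+n-1}(1-t)^s\,dt\,dr_1\,dr_2. \]

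The integrand is maximized at the corner $(t,r_1,r_2)=(1,1,1)$ in a degenerate way: writing $u=1-t$, $v_j=1-r_j$, the combination $v_j^{\lambda p+n-1}(u+v_j-uv_j)^{-\lambda p}=v_j^{n-1}(1+u(1-v_j)/v_j)^{-\lambda p}$ is of unit size only when $u/v_j=O(1/\lambda)$, which together with the concentration of $(1-v_j)^{\lambda q-1}$ near $v_j=0$ at scale $1/\lambda$ dictates the asymmetric scaling $u=X/\lambda^2$, $v_j=Y_j/\lambda$. Under this scaling $(1+u(1-v_j)/v_j)^{-\lambda p}\to e^{-pX/Y_j}$, $(1-v_j)^{\lambda q-1}\to e^{-qY_j}$, $(1-u)^{\lambda(p+q)+n-1}\to 1$, and the Jacobian $du\,dv_1\,dv_2$ gives $\lambda^{-4}\,dX\,dY_1\,dY_2$, so that the rescaled leading integrand is
\[ \frac{1}{\lambda^{2n+2s+2}}\,X^s Y_1^{n-1}Y_2^{n-1}\exp\bigl(-q(Y_1+Y_2)-pX(1/Y_1+1/Y_2)\bigr). \]

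Evaluation of this leading integral is then explicit: the $X$-integral yields $\Gamma(s+1)(Y_1 Y_2)^{s+1}(Y_1+Y_2)^{-s-1}/p^{s+1}$, and the substitution $Y_1=\xi y$, $Y_2=\xi(1-y)$ separates the remaining double integral into a Gamma integral in $\xi$ and a Beta integral in $y$, producing
\[ \frac{\Gamma(s+1)\Gamma(2n+s+1)\Gamma(n+s+1)^2}{p^{s+1}q^{2n+s+1}\Gamma(2n+2s+2)}. \]
Combining with the Stirling-type expansion $\Gamma(\lambda q+n)^2/\Gamma(\lambda q)^2\sim(\lambda q)^{2n}$ from the prefactor then delivers the stated leading constant, the $\lambda^{-2s-2}(pq)^{-s-1}$ scaling, and the normalization $a_0(p,q)=1$. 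The~higher-order coefficients $a_j(p,q)$ arise from retaining the full Taylor expansion of each factor of the rescaled integrand about the scaling limit, combined with the Stirling expansion of $\Gamma(\lambda q+n)^2/\Gamma(\lambda q)^2$; each correction term is integrable against the same Gaussian-type weight and contributes a rational function of $p,q$.

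The~main technical obstacle is the degenerate boundary character of the Laplace problem: the maximum lies at a corner of the integration domain, two distinct scales ($1/\lambda$ and $1/\lambda^2$) are involved, and the integrand has a singularity at that corner from the competition between $(1-r_j)^{\lambda p+n-1}$ and $(1-tr_j)^{-\lambda p}$. Rigorously justifying the termwise asymptotic expansion and controlling the error from outside a shrinking neighborhood of the corner---which must be shown to decay faster than any negative power of~$\lambda$---requires careful but standard uniform estimates on the residual exponent $\lambda[q\log(1-v_j) + p\log v_j - p\log(u+v_j-uv_j)]$ away from the rescaled region, together with dominated-convergence-type arguments inside it.
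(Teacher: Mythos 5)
Your approach is genuinely different from the paper's. The paper also begins with the Euler integral for ${}_2F_1$, but then passes through two successive Appell $F_1$ integral representations (plus a transformation formula for $F_1$), makes the substitutions $u=\tau\sigma$, $v=(1-\tau)\sigma$ and $x=e^{-XU/q}$, $y=e^{-(1-X)U/p}$, $\sigma=e^{-S/(p+q)}$ to arrive at a one-dimensional Laplace integral in $\tau$ with an \emph{interior} maximum at $\tau_0=P$, and finally fixes the overall constant by a normalization trick exploiting $2(s+1)c_{pq}(s)\to1$ as $s\searrow-1$. Your route is more economical and computes the leading constant directly from an explicit Gamma/Beta integral; the price is that your Laplace problem is a degenerate two-scale expansion at a corner of the cube, which, as you correctly flag, is the nontrivial rigor issue that your write-up does not resolve (the paper's interior-maximum Laplace is the textbook case, though the paper too leaves the justification of \eqref{tYG}--\eqref{tYH} largely implicit).

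One concrete quantitative point deserves a second look. Carrying your scaling through carefully, the rescaled integral contributes
$\lambda^{-2n-2s-2}\,\Gamma(s+1)\Gamma(2n+s+1)\Gamma(n+s+1)^2\,p^{-s-1}q^{-2n-s-1}/\Gamma(2n+2s+2)$,
and multiplying by the prefactor $\Gamma(\lambda q+n)^2/(2\Gamma(n)^2\Gamma(\lambda q)^2)\sim(\lambda q)^{2n}/(2\Gamma(n)^2)$ gives
\[ c_{\lambda p,\lambda q}(s)\sim\frac12\cdot\frac{\Gamma(2n+s+1)\Gamma(n+s+1)^2\Gamma(s+1)}{\Gamma(n)^2\Gamma(2n+2s+2)}\,\frac{\lambda^{-2s-2}}{(pq)^{s+1}}, \]
which is \emph{half} the constant stated in Theorem~\ref{PG}, not equal to it as you claim. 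Before attributing this to an error on your side, note that the stated theorem appears to be internally inconsistent with the rest of the paper by exactly this factor of two: \eqref{tXH} gives $c_{0q}(s)=\tfrac12\,\Gamma(q+n)\Gamma(s+1)/\Gamma(q+n+s+1)$ (consistent with \eqref{tXK}), while \eqref{tTR} omits the $\tfrac12$; and in the paper's own proof, dividing \eqref{tYH} (whose left-hand side is $2(s+1)c_{pq}(s)$) by its value at $s=-1$ (which equals $1$) yields an expression for $2(s+1)c_{pq}(s)$, not for $(s+1)c_{pq}(s)$ as written just before \eqref{tYC}. Your value with the extra $\tfrac12$ therefore appears to be the correct one, but your claim of matching the stated constant should be corrected, and the discrepancy with the paper noted explicitly.
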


For the omitted case $pq=0$, we get directly from \eqref{tTM}
\[ c_{0q}(s) = c_{q0}(s) = \frac{\Gamma(q+n)\Gamma(s+1)}{\Gamma(q+n+s+1)}
 \approx \frac{\Gamma(n)\Gamma(s+1)}{\Gamma(n+s+1)} q^{-s-1}
 \qquad \text{as $q\to+\infty$} . \label{tTR}  \]
Thus in all cases $c_{pq}(s)\approx \frac{\Gamma(n)\Gamma(s+1)}{\Gamma(n+s+1)}
(p+1)^{-s-1}(q+1)^{-s-1}$ as $p+q\to+\infty$. Our~final result, though falling short of
showing the situation for $K_s$ itself, thus at least describes the boundary behavior of
a series whose ``leading order term'' is the same as for~$K_s$.

\begin{theorem*}[Theorem~\ref{PH}] For $n>1$ and $s=0,1,2,\dots$, consider the function
$F_s(z,w)$ given by \eqref{tTN}, \eqref{tTO} but with $c_{pq}(\mu)$ replaced by
$(p+\tfrac{n-1}2)^{-s-1}(q+\tfrac{n-1}2)^{-s-1}$. Then
\[ \begin{aligned}
F_s(z,w) &= \LL^{s+1} \Big[ \frac{\Gamma(n)}{2\pi^n} \frac{(1-y_2)^n}{(1-x_2)^n(1-y_1)^n}
 \sum_{i_1=0}^n \sum_{i_2,j_1=0}^{n-i_1}
 \frac{(-n)_{i_1+i_2} (-n)_{i_1+j_1} (n)_{i_2} (n)_{j_1}} {i_1!i_2!j_1!(n)_{i_1+i_2+j_1}} \\
&\hskip2em x_1^{i_1} \Big(\frac{x_1-y_1}{1-y_1}\Big)^{i_2} \Big(\frac{x_1-x_2}{1-x_2}\Big)^{j_1}
 \FF21{i_2+n,j_1+n}{i_1+i_2+j_1+n}{1-\frac{(1-x_1)(1-y_2)}{(1-x_2)(1-y_1)}} \Big] \\
&\hskip2em  \Big|_{x_1=|z|^2, x_2=\spr{z,w}, y_1=\spr{w,z}, y_2=|w|^2}, \end{aligned} \label{tTP}  \]
where $\LL$ is the linear differential operator
$$ \LL:=(x_2y_1-x_1y_2)\frac{\partial^2}{\partial x_2\partial y_1}
 +\frac{n-1}2 \Big(x_2\frac\partial{\partial x_2}+y_1\frac\partial{\partial y_1}\Big)
 +\frac{(n-1)^2}4 I.  $$
\end{theorem*}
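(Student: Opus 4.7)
The plan is to verify \eqref{tTP} coefficient by coefficient in the series expansion of $K\Sz$ provided by Theorem~\ref{PF}, by showing that the operator $\LL$ acts on the inner series in a very clean way: it multiplies the $l$-th summand of each coefficient by the scalar $(p+l+\tfrac{n-1}2)(q+l+\tfrac{n-1}2)$. Iterating $s+1$ times then produces exactly the series defining $F_s$, and since $K\Sz$ also equals the closed-form expression inside the brackets in \eqref{tTP} by Theorem~\ref{PD}, the result follows.

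Specializing Theorem~\ref{PF} to $\mu$ the unit mass at $t=1$ gives, under the substitution $x_1=|z|^2$, $x_2=\spr{z,w}$, $y_1=\spr{w,z}$, $y_2=|w|^2$,
$$K\Sz = \frac{\Gamma(n)}{2\pi^n}(1-x_1)^n(1-y_2)^n\, G(x_1,x_2,y_1,y_2), \quad G := \sum_{p,q,j,m\ge0} \frac{B_{pqjm}}{p!\,q!\,j!\,m!}\, x_2^p y_1^q x_1^j y_2^m,$$
where $B_{pqjm} = \sum_{l=0}^{\min(j,m)} \beta_l(p,q,j,m)$ is the coefficient from \eqref{tTO} with $c_{p+l,q+l}\equiv1$. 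Since $\LL$ involves no derivatives with respect to $x_1$ or $y_2$, it commutes with multiplication by any function of $x_1,y_2$ alone, so $\LL^{s+1}K\Sz = \frac{\Gamma(n)}{2\pi^n}(1-x_1)^n(1-y_2)^n\,\LL^{s+1}G$ and it suffices to analyze $\LL^{s+1}G$.

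A direct calculation on monomials gives
$$\LL(x_2^p y_1^q x_1^j y_2^m) = (p+\tfrac{n-1}2)(q+\tfrac{n-1}2)\,x_2^p y_1^q x_1^j y_2^m - pq\, x_2^{p-1} y_1^{q-1} x_1^{j+1} y_2^{m+1},$$
so reindexing the second contribution identifies the $(p,q,j,m)$-coefficient of $\LL G$ as
$$\frac{1}{p!q!j!m!}\bigl[(p+\tfrac{n-1}2)(q+\tfrac{n-1}2) B_{pqjm} - jm\, B_{p+1,q+1,j-1,m-1}\bigr].$$
The crux of the argument is the Pochhammer identity
$$jm\, \beta_{l-1}(p+1,q+1,j-1,m-1) = -l(n+p+q+l-1)\, \beta_l(p,q,j,m) \qquad (l\ge1),$$
which is the main obstacle but is nonetheless purely mechanical: it follows from the closed formula for $\beta_l$ in \eqref{tTO} using $(-(j-1))_{l-1}(-(m-1))_{l-1}=(-j)_l(-m)_l/(jm)$ and $\Gamma(n+p+q+l)=(n+p+q+l-1)\Gamma(n+p+q+l-1)$. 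Inserting this identity into the bracket above and invoking the trivial algebraic fact
$$(p+\tfrac{n-1}2)(q+\tfrac{n-1}2) + l(n+p+q+l-1) = (p+l+\tfrac{n-1}2)(q+l+\tfrac{n-1}2)$$
shows that the $(p,q,j,m)$-coefficient of $\LL G$ equals $\sum_{l=0}^{\min(j,m)} (p+l+\tfrac{n-1}2)(q+l+\tfrac{n-1}2)\beta_l(p,q,j,m)/(p!q!j!m!)$.

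The same argument applies when $\LL$ is reapplied: if $\tilde B_{pqjm}=\sum_l \gamma_l\beta_l$ with $\gamma_l=(p+l+\tfrac{n-1}2)(q+l+\tfrac{n-1}2)$, then $\gamma_l(p+1,q+1)=\gamma_{l+1}(p,q)$, and the same reindexing $l\mapsto l-1$ together with the Pochhammer identity above yields another factor of $\gamma_l(p,q)$. Induction on $s$ therefore gives
$$\text{$(p,q,j,m)$-coefficient of }\LL^{s+1}G = \frac{1}{p!q!j!m!}\sum_{l=0}^{\min(j,m)} (p+l+\tfrac{n-1}2)^{s+1}(q+l+\tfrac{n-1}2)^{s+1}\,\beta_l(p,q,j,m),$$
which is precisely the coefficient in the series defining $F_s$ (with $c_{p+l,q+l}(\mu)$ replaced as prescribed). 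Convergence and termwise differentiability on the relevant polydisc follow from standard estimates, completing the identification. Thus $F_s(z,w)=\LL^{s+1}K\Sz$, and substituting the closed form of $K\Sz$ from Theorem~\ref{PD} gives \eqref{tTP}.
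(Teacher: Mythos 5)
Your proof is correct, but it follows a genuinely different route from the paper's. The paper introduces the tangential operator $D=-\tfrac14\dsph-\tfrac14\cR^2+\tfrac{(n-1)^2}4 I$, which by \eqref{tYA} acts on each $\bhpq$ as multiplication by $(p+\tfrac{n-1}2)(q+\tfrac{n-1}2)$; the expansion \eqref{tVJ} then gives $F_s=D^{s+1}K\Sz$ at one stroke, and a single computation of $D$ on the monomials $\spr{z,w}^p\spr{w,z}^q|z|^{2j}|w|^{2m}$ identifies $D$ with $\LL$ in the variables $x_1,x_2,y_1,y_2$. You bypass $\dsph$, $\cR$ and the Peter--Weyl expansion altogether and instead check, coefficient by coefficient in the quadruple series of Theorem~\ref{PF} with $c_{pq}\equiv1$, that $\LL$ multiplies the $l$-th summand of \eqref{tTO} by $(p+l+\tfrac{n-1}2)(q+l+\tfrac{n-1}2)$. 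Your two key identities, $jm\,\beta_{l-1}(p+1,q+1,j-1,m-1)=-l(n+p+q+l-1)\,\beta_l(p,q,j,m)$ (note that the two Gamma-ratio factors and the factor $n+p+q+2l-1$ are invariant under the shift, so only the third factor changes) and $(p+\tfrac{n-1}2)(q+\tfrac{n-1}2)+l(n+p+q+l-1)=(p+l+\tfrac{n-1}2)(q+l+\tfrac{n-1}2)$, both check out, and the shift relation $\gamma_l(p+1,q+1)=\gamma_{l+1}(p,q)$ does make the induction on $s$ go through. What your route buys is elementarity and a direct match with the statement's definition of $F_s$ via \eqref{tTN}--\eqref{tTO}; what the paper's route buys is an explanation of where $\LL$ comes from (it is $D$ rewritten in the coordinates $x_1,x_2,y_1,y_2$) and the diagonal action for free from \eqref{tYA}, with no combinatorial identity needed.

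One point to tighten in your final sentence: your computation applies $\LL^{s+1}$ to the power series in the four \emph{independent} variables, whereas \eqref{tTP} applies it to the closed form of Theorem~\ref{PD}; as stated, these two four-variable expressions are only known to coincide after the substitution $x_1=|z|^2$, $x_2=\spr{z,w}$, $y_1=\spr{w,z}$, $y_2=|w|^2$, and one may not differentiate an identity that is only known on a subvariety. The gap is easily closed: the remark following the proof of Theorem~\ref{PF} (Bailey's formula) gives $\sum_l\beta_l(p,q,j,m)=(n)_{p+j}(n)_{q+j}(n)_{p+m}(n)_{q+m}/(n)_{p+q+j+m}$, so your series is exactly the four-variable function $\tfrac{\Gamma(n)}{2\pi^n}(1-x_1)^n(1-y_2)^n FD_1$ of Corollary~\ref{PC}, and the transformations \eqref{tWL}--\eqref{tWM} used in the proof of Theorem~\ref{PD} are identities in four independent variables; alternatively, observe that the substitution set is a uniqueness set for the holomorphic extensions. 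With a sentence to this effect the argument is complete.
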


Note that $\LL$ involves differentiations only with respect to $x_2$ and~$y_1$.

Recall that for each $z\in\Bn$, $z\neq0$, there is a (unique) biholomorphic self-map
$\phi_z\in\operatorname{Aut}(\Bn)$ which interchanges $z$ and the origin~$0$; explicitly,
$$ \phi_z(w) = \frac{z-P_z w - \sqrt{1-|z|^2}(w-P_z w)}{1-\spr{w,z}},
 \qquad P_z w:=\frac{\spr{w,z}}{|z|^2}z.  $$
For $z=0$, we set $\phi_0(w):=-w$. One~has the useful formula \cite[Theorem~2.2.2]{Ru}
\[ 1-\spr{\phi_z w_1,\phi_z w_2} = \frac{(1-|z|^2)(1-\spr{w_1,w_2})}
 {(1-\spr{z,w_2})(1-\spr{w_1,z})}, \label{tTT}  \]
from which it follows that the various quantities appearing in \eqref{tTP} are in fact
given~by
\begin{align*}
& x_1=|z|^2, \quad \frac{x_1-x_2}{1-x_2}=\spr{z,\phi_z w}, \qquad
 \frac{x_1-y_1}{1-y_1}=\spr{\phi_z w,z}, \quad \\
&\hskip2em  1-\frac{(1-x_1)(1-y_2)}{(1-x_2)(1-y_1)} = |\phi_z w|^2.
\end{align*}
We~set
\[ U:=|z|^2, \quad V:=|\phi_z w|^2, \quad Z:=\spr{z,\phi_z w}; \label{tTQ}  \]
it~will be shown that the map $(z,w)\mapsto(U,V,Z)$ is actually a bijection~of
$$ \text{$\Bn\times\Bn$ modulo the action of the group $\Un$ of unitary maps of $\CC^n$} $$
onto the set
$$ \Omega:=\{(U,V,Z): 0\le U,V<1, Z\in\CC, |Z|^2\le UV \}.  $$
The~coordinates $U,V,Z$ are well suited for the description of the singularity of $F_s$
near the boundary diagonal.

\begin{corollary*}[Corollaries~\ref{PI} and~\ref{PJ}] For $n>1$ and $s=0,1,2,\dots$,
$$ F_s\in C^{n-1}(\bbd), $$
and
\begin{align*}
 F_s(z,w) &= \frac{(1-V)^n}{(1-U)^{n+s+1}}
 \sum_{i_1=0}^n \sum_{i_2,j_1=0}^{n-i_1} \sum_{k=0}^{s+1}
 P_{i_1 i_2 j_1 k}(U,Z,\oZ,V) \\
&\hskip2em \;\times \FF21{i_2+m+k,j_1+n+k}{i_1+i_2+j_1+n+k}V,
\end{align*}
where $P_{i_1 i_2 j_1 k}(U,Z,\oZ,V)$ is a polynomial of degree at most
$i_1+s+1$, $j_1+s+1$, $i_2+s+1$ and $k+s+1$, respectively, in~the indicated variables.
\end{corollary*}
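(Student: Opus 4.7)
The plan is to change from the $(x_1, x_2, y_1, y_2)$-coordinates appearing in the bracketed expression of~\eqref{tTP} to the $(U, V, Z, \oZ)$-coordinates of~\eqref{tTQ}, rewrite $\LL$ in these coordinates, and then prove by induction on $r = 0, 1, \dots, s+1$ that $\LL^r$ applied to that expression produces a finite sum of the claimed structural form. The regularity statement $F_s \in C^{n-1}(\bbd)$ will then follow from the explicit form by standard analysis of ${}_2F_1$ near $V = 1$, along the same lines as for Proposition~\ref{PE}.

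Inverting the change of variables via~\eqref{tTT} and the formulas displayed just before~\eqref{tTQ} yields $x_1 = U$, $x_2 = (U - Z)/(1 - Z)$, $y_1 = (U - \oZ)/(1 - \oZ)$ and $y_2 = 1 - (1-U)(1-V)/[(1-Z)(1-\oZ)]$, and in particular $(1 - y_2)/[(1-x_2)(1-y_1)] = (1-V)/(1-U)$. Thus the bracketed expression in~\eqref{tTP} already has the required structural form at $r := 0$, with prefactor $(1-V)^n/(1-U)^n$, sum restricted to $k=0$, and polynomials $P^{(0)}_{i_1 i_2 j_1 0}$ proportional to $U^{i_1} Z^{j_1} \oZ^{i_2}$ of degrees $(i_1, j_1, i_2, 0)$. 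To rewrite $\LL$ in the new coordinates I use the chain rule together with
$$\frac{\partial V}{\partial x_2} = -\frac{(1-V)(1-Z)}{1-U}, \qquad \frac{\partial Z}{\partial x_2} = -\frac{(1-Z)^2}{1-U}$$
(and the symmetric formulas under $x_2 \leftrightarrow y_1$, $Z \leftrightarrow \oZ$), plus the key algebraic identity
$$x_2 y_1 - x_1 y_2 = \frac{(1-U)(|Z|^2 - UV)}{(1-Z)(1-\oZ)},$$
whose denominator cancels exactly against the $(1-Z)$ and $(1-\oZ)$ factors produced by $\partial^2/\partial x_2\partial y_1$. Accordingly, $\LL$ becomes a second-order operator in $V, Z, \oZ$ whose coefficients are polynomials in $U, Z, \oZ, V$ divided by at most one extra power of $(1-U)$.

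The heart of the proof is the inductive step: assuming $\LL^r G_0$ has the claimed form with $r$ in place of $s+1$, show that $\LL^{r+1} G_0$ has it with $r + 1$. One combines the Gauss derivative rule $(d/dV)\FF21{a,b}{c}{V} = (ab/c)\FF21{a+1,b+1}{c+1}{V}$, which shifts $k \mapsto k + 1$ each time $\LL$ differentiates through a ${}_2F_1$, with the trivial observation that multiplication by any of $U, Z, \oZ, V$ raises the respective polynomial degree by exactly one. Each term produced by one application of $\LL$ is checked to either bump one of the four polynomial degrees by at most one, or introduce one extra factor $(1-U)^{-1}$, but never both simultaneously; crucially, the $(1-V)^n$ prefactor is preserved because $\partial V/\partial x_2$ supplies a matching $(1-V)$ that absorbs into the polynomial. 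The expected main obstacle is this bookkeeping: $\LL$ has a second-order, a first-order and a constant piece, each of which splits into several chain-rule terms, and verifying that every such term respects the degree bounds and the $(1-U)^{-1}$-exponent uniformly requires a careful but entirely elementary case analysis.

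For the regularity statement, I~would apply Euler's transformation
$$\FF21{i_2 + n + k, j_1 + n + k}{i_1 + i_2 + j_1 + n + k}{V} = (1-V)^{i_1 - n - k} \FF21{i_1 + j_1, i_1 + i_2}{i_1 + i_2 + j_1 + n + k}{V},$$
whose right-hand ${}_2F_1$ has non-negative integer numerator parameters and is therefore either a polynomial or a real-analytic function on $[0,1)$ whose value at $V = 1$ involves at worst a $\log(1-V)$ singularity, occurring only in the degenerate case $c = a + b$ (which forces $k = 0$ and $i_1 = n$). Combining with the outer $(1-V)^n$ yields $(1-V)^{i_1 - k}$ times a bounded or logarithmically mild factor. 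Since $|1 - \spr{z, w}|$ stays positive away from the boundary diagonal, so do $1 - U$ and $1 - V$ along every such approach, and a direct count of derivatives in $(z, w)$ shows that every partial derivative of $F_s$ of order at most $n - 1$ extends continuously to $\bbd$, yielding $F_s \in C^{n-1}(\bbd)$ as claimed.
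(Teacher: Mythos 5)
Your plan for Corollary~\ref{PJ} is essentially the paper's: pass to the $(U,V,Z,\oZ)$ coordinates of \eqref{tTQ}, express $\LL$ in these coordinates, and induct using $\partial_V\,{}_2F_1(a,b;c;V)=\tfrac{ab}{c}\,{}_2F_1(a+1,b+1;c+1;V)$. Your chain-rule formulas $\partial V/\partial x_2 = -(1-V)(1-Z)/(1-U)$, $\partial Z/\partial x_2 = -(1-Z)^2/(1-U)$ and the identity $x_2y_1-x_1y_2 = (1-U)(|Z|^2-UV)/[(1-Z)(1-\oZ)]$ are correct and are precisely the ingredients that produce the paper's form of $\LL$ in these coordinates. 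However, your bookkeeping claim that each term ``either bumps one of the four polynomial degrees by at most one, or introduces one extra factor $(1-U)^{-1}$, \emph{but never both simultaneously}'' is false: every non-constant piece of $\LL$ in the new coordinates has the common prefactor $(1-U)^{-1}$ \emph{and} polynomial coefficients like $(|Z|^2-UV)$, $(1-Z)(U-Z)$, etc., so a single application of $\LL$ does both. This does not break the conclusion (one extra $(1-U)^{-1}$ and at most one degree step per variable per application is all that is needed), but the stated reason is wrong. You also do not address the $(|Z|^2-UV)(1-V)^2\partial_V^2/(1-U)$ term, which \emph{a priori} shifts the ${}_2F_1$ index $k$ by $2$ rather than $1$; getting the claimed bound $k\le s+1$ requires a further reduction (e.g.\ feeding the hypergeometric ODE into the $-UV(1-V)^2\partial_V^2$ piece) which you should spell out.

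For Corollary~\ref{PI} your route genuinely differs from the paper's, and as written it has a real gap. Applying Euler's transformation termwise gives the outer factor $(1-V)^{i_1-k}$ against a ${}_2F_1$ that is at worst logarithmic. But $i_1-k$ can be negative: the index $k$ runs up to $s+1$ while $i_1$ can be $0$, so individual terms \emph{diverge} as $V\to1$. Regularity of $F_s$ then requires either an extra divisibility property of $P_{i_1 i_2 j_1 k}$ by $(1-V)^k$ (not part of the stated claim, and your induction does not keep track of it) or genuine cancellation across terms, and you address neither. The paper avoids this entirely: it starts from the representation \eqref{tWT} of $K\Sz$ as $\sum a_{jk}(Q,\overline Q)(1-x_1)^j(1-y_2)^k + \sum b_{jk}(Q,\overline Q)(1-x_1)^j(1-y_2)^k\log[(1-x_1)(1-y_2)]$ with coefficients holomorphic in $Q=1/(1-x_2)$, $\overline Q=1/(1-y_1)$, and observes that $\LL$ only differentiates in $x_2,y_1$, hence only changes the coefficients $a_{jk},b_{jk}$ and leaves the $(1-x_1,1-y_2)$-singularity structure untouched; so $F_s=\LL^{s+1}K\Sz$ inherits the same $C^{n-1}$ regularity. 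I would replace your regularity argument with this one.
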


We~expect the boundary behavior of the \Mh kernels $K_s$ to be of the same nature as for~$F_s$.

The~paper is organized as follows. In~Section~\ref{sec2}, we~review the necessary prerequisites
on the Peter-Weyl decomposition of \Mh functions under the action of the unitary group $\Un$
of~$\CC^n$. The~results about the \Mh Szeg\"o kernel are proved in Section~\ref{sec3},
and those about $K_\mu$ in Section~\ref{sec4}. The~asymptotic expansion of $c_{pq}(s)$ and
the assertions about $F_s$ are derived in Section~\ref{sec5}. Some final remarks, comments and
open problems are collected in the final section, Section~\ref{sec6}.

To~make typesetting a little neater, the shorthand
$$ \GG{a_1,a_2,\dots,a_k}{b_1,b_2,\dots,b_m} :=
 \frac{\Gamma(a_1)\Gamma(a_2)\dots\Gamma(a_k)}
  {\Gamma(b_1)\Gamma(b_2)\dots\Gamma(b_m)}  $$
is often employed throughout the paper. The~inner product $\spr{z,w}$ of $z,w\in\CC^n$
is sometimes also written as $\cdd zw$; and $\phi_z(w)$ is frequently abbreviated
just to $\phi_z w$ (which we actually already did a few paragraphs above).

\section{Notation and preliminaries} \label{sec2}
The stabilizer of the origin $0\in\Bn$ in $\operatorname{Aut}(\Bn)$ is the group $\Un$
of all unitary transformations of~$\CC^n$; that~is, of~all linear operators $U$ that
preserve inner products:
$$ \spr{Uz,Uw} = \spr{z,w} \qquad \forall z,w\in\CC^n.   $$
Each $U\in\Un$ maps the unit sphere $\pBn$ onto itself, and the surface measure
$d\sigma$ on $\pBn$ is invariant under~$U$. It~follows that the composition with
elements of~$\Un$,
\[ T_U: f \mapsto f \circ U^{-1},  \label{tVA}  \]
is~a unitary representation of $\Un$ on $L^2(\pBn,d\sigma)$. We~will need the decomposition
of this representation into irreducible subspaces. These turn out to be given by
\emph{bigraded spherical harmonics}~$\chpq$; the standard sources for this are
Rudin~\cite[Chapter 12.1--12.2]{Ru}, or Krantz~\cite[Chapter 6.6--6.8]{KraPDECA},
with basic ingredients going back to Folland~\cite{Foll}.

Namely, for integers $p,q\ge0$, let $\chpq$ be vector space of restrictions to $\pBn$
of harmonic polynomials $f(z,\oz)$ on $\CC^n$ which are homogeneous of degree $p$ in $z$
and homogeneous of degree $q$ in~$\oz$. Then $\chpq$ is invariant under the action
\eqref{tVA} of~$\Un$, is $\Un$-irreducible (i.e.~has no proper $\Un$-invariant subspace)
and
\[ L^2(\pBn,d\sigma) = \bigoplus_{p,q=0}^\infty \chpq .  \label{tVB}  \]
Furthermore, the representations of $\Un$ on $\chpq$ are mutually inequivalent;
that~is, if $T:\chpq\to\mathcal H^{kl}$ is a linear operator commuting with the
action~\eqref{tVA}, then necessarily
\[ \begin{cases} T=0 & \text{ if } (k,l)\neq(p,q), \\
T=cI|_{\chpq} \text{ for some }c\in\CC & \text{ if }(k,l)=(p,q),
\end{cases} \label{tVH}  \]
where $I$ denotes the identity operator.

Since each space $\chpq$ is finite-dimensional, the evaluation functional $f\mapsto f(\zeta)$
at each $\zeta\in\pBn$ is automatically continuous on~it; it~follows that $\chpq$ --- with the
inner product inherited from $L^2(\pBn,d\sigma)$ --- has a reproducing kernel. This reproducing
kernel turns out to be given by $H^{pq}(\cdd\zeta\eta)$, where for $n\ge2$
\[ \begin{aligned}
H^{pq}(re^{i\theta}) &= \frac{(p+q+n-1)(p+n-2)!(q+n-2)!}{p!q!(n-1)!(n-2)!} \\
&\hskip2em \,\times r^{|p-q|} e^{(p-q)i\theta} \frac{\Gamma(n)}{2\pi^n}
 \frac{P^{(n-2,|p-q|)}_{\min(p,q)}(2r^2-1)}{P^{(n-2,|p-q|)}_{\min(p,q)}(1)},
 \end{aligned} \label{tVC}  \]
where
$$ P^{(\alpha,\beta)}_m(x) = (-1)^n \frac{(1-x)^{-\alpha}(1+x)^{-\beta}}{m!2^m}
 \frac{d^m}{dx^m} [(1-x)^{\alpha+m}(1+x)^{\beta+m}]  $$
are the Jacobi polynomials. Thus
$$ f(\zeta) = \int_\pBn f(\eta) H^{pq}(\cdd\zeta\eta)\,d\sigma(\eta),
 \qquad\forall \zeta\in\pBn, \forall f\in\chpq.  $$
In~particular, we~have the orthogonality relations
\[ \int_\pBn H^{pq}(\cdd\zeta\eta) H^{kl}(\cdd\eta\xi) \,d\sigma(\eta)
 = \delta_{pk}\delta_{ql} H^{pq}(\cdd\zeta\xi).   \label{tVD}   \]
Note that by \cite[formula 10.8(16)]{BE},
$$ P^{(\alpha,\beta)}_m(x) = (-1)^m \binom{m+\beta}m \FF21{-m,m+\alpha+\beta+1}{\beta+1}{\frac{1+x}2}, $$
so~we have
\[ \begin{aligned}
H^{pq}(z) &= \frac{\Gamma(n)}{2\pi^n} \frac{(-1)^q(n+p+q-1)(n+p-2)!}{(n-1)!q!(p-q)!}  \\
&\hskip4em \,\times z^{p-q} \FF21{-q,n+p-1}{p-q+1}{|z|^2} \qquad\text{for }p\ge q, \end{aligned} \label{tVE}  \]
while $H^{pq}(z)=H^{qp}(\oz)$ for $p<q$. This formula will be useful later~on.

Denote
\[ \begin{aligned}
S^{pq}(r) &:= r^{p+q} \FF21{p,q}{p+q+n}{r^2} \Big/ \FF21{p,q}{p+q+n}1 \\
 &= \GG{p+n,q+n}{n,p+q+n} r^{p+q} \FF21{p,q}{p+q+n}{r^2} . \end{aligned}
 \label{tVF}  \]
Then for each $f\in\chpq$, the (unique) solution to the Dirichlet problem $\td u=0$ on~$\Bn$,
$u|_\pBn=f$ is given~by
\[ u(r\zeta) = S^{pq}(r) f(\zeta), \qquad 0\le r\le 1, \; \zeta\in\pBn.  \label{tVG}  \]

For $n=1$, all the above remains in force, only the spaces $\chpq$ reduce just to $\{0\}$
if $pq\neq0$, while $\mathcal H^{p0}=\CC z^p$, $\mathcal H^{0q}=\CC\oz^q$ and
$H^{p0}(z)=\frac1{2\pi}z^p$, $H^{0q}(z)=\frac1{2\pi}\oz^q$; note that the formula \eqref{tVE} still works for $n=1$ and $pq=0$.

For each fixed $x\in\Bn$, the \Mh Poisson kernel $P(x,\cdot)$ always belongs to $L^2(\pBn,d\sigma)$
(it~is a smooth function on the sphere), hence it can be decomposed into the $\chpq$ components as
in~\eqref{tVB}. This decomposition was obtained by Folland~\cite{Foll}:
\[ P(r\zeta,\eta) = \sum_{p,q=0}^\infty S^{pq}(r) H^{pq}(\cdd\zeta\eta),
 \qquad 0\le r<1, \; \zeta,\eta\in\pBn.  \label{tVI}  \]
Folland gave his proof for $n\ge2$, but with the caveat from the preceding paragraph it actually
holds also for $n=1$. (We~will give an alternative proof for any $n$ in Remark~\ref{reFo} in
Section~\ref{sec4}). The~sum converges pointwise, uniformly for $\eta\in\pBn$ and $r\zeta$ in
a compact subset of~$\Bn$, as~well as in $L^2(\pBn,d\sigma)$ for each fixed $r$ and~$\zeta$.

Using the orthogonality relations~\eqref{tVD}, one~can get from \eqref{tVI} the analogous
decomposition for the \Mh Szeg\"o kernel. Namely, starting from~\eqref{tTH}:
$$ K\Sz(x,y) = \int_\pBn P(x,\zeta) \overline{P(y,\zeta)} \,d\sigma(\zeta)  $$
(note that the complex conjugation actually has no effect, since $P(y,\zeta)$ is real-valued),
and substituting \eqref{tVI} for $P(x,\zeta)$ and $P(y,\zeta)$, we~get
\begin{align}
K\Sz(r\zeta,R\xi) &= \sum_{p,q,k,l=0}^\infty S^{pq}(r) S^{kl}(R)
 \int_\pBn H^{pq}(\cdd\zeta\eta) H^{kl}(\cdd\eta\xi) \,d\sigma(\eta)  \nonumber \\
&= \sum_{p,q=0}^\infty S^{pq}(r) S^{pq}(R) H^{pq}(\cdd\zeta\xi)
 \qquad\text{by \eqref{tVD}}, \label{tVJ}   \end{align}
the interchange of integration and summation being justified by the $L^2$-convergence.

We~conclude this section by giving a similar formula for the reproducing kernel of any
Hilbert space of \Mh functions on $\Bn$ with a $\Un$-invariant inner product.

For each $p,q\ge0$, let $\bhpq$ be he space of all functions on $\Bn$ of the form
\eqref{tVG} with $f\in\chpq$. In~other words, while $\chpq$ is the space of spherical
harmonics on the sphere~$\pBn$, $\bhpq$ is the associated space of ``solid'' \Mh functions
on~$\Bn$. With the inner product inherited from~$L^2(\pBn,d\sigma)$, each $\bhpq$ is thus
a finite-dimensional Hilbert space of \Mh functions on~$\Bn$, unitarily isomorphic to
the space $\chpq$ via the isomorphism~\eqref{tVG}, and with reproducing kernel
\[ K^{pq}(r\zeta,R\xi) := S^{pq}(r) S^{pq}(R) H^{pq}(\cdd\zeta\xi). \label{tVM}  \]

\begin{proposition} \label{PA}
Let $\HH$ be any Hilbert space of \Mh functions on $\Bn$ which contains $\bhpq$ for
all $p,q\ge0$, is~invariant under the action~\eqref{tVA} (i.e.~$T_U f\in\HH$ whenever
$f\in\HH$ and $U\in\Un$) and whose inner product is $\Un$-invariant:
\[ \spr{T_U f,T_U g}_\HH = \spr{f,g}_\HH \qquad \forall f,g\in\HH, U\in\Un.  \label{tVK}  \]
Then
\begin{itemize}
\item[(i)] the spaces $\bhpq$ are pairwise orthogonal in~$\HH$;
\item[(ii)] on~each $\bhpq$, the $\HH$-inner product is a constant multiple of the
$L^2(d\sigma)$-inner product: there exist finite constants $c_{pq}>0$ such that
\[ \spr{f,g}_\HH = c_{pq} \spr{f,g}_{L^2(\pBn,d\sigma)} \qquad\forall f,g\in\bhpq .
 \label{tVN} \]
\end{itemize}
Furthermore, if~the action $U\mapsto T_U$ of $\Un$ on $\HH$ is strongly continuous
(i.e.~for each $f\in\HH$, $U\mapsto T_U f$ is continuous from $\Un$ into~$\HH$),
then additionally
\begin{itemize}
\item[(iii)] the linear span of $\bhpq$, $p,q\ge0$, is~dense in~$\HH$;
\item[(iv)] if~the point evaluations are bounded on~$\HH$, then the reproducing kernel
$K_\HH$ of $\HH$ is given~by
\[ K_\HH(r\zeta,R\xi) = \sum_{p,q=0}^\infty \frac{S^{pq}(r) S^{pq}(R)
 H^{pq}(\cdd\zeta\xi)}{c_{pq}},  \label{tVO}  \]
with the sum converging pointwise and locally uniformly on compact subsets of $\Bn\times\Bn$,
as~well as in~$\HH$ as a function of $x=r\zeta$ for each fixed $y=R\xi$, or~vice versa.
\end{itemize}
\end{proposition}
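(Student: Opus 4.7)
The plan is to handle (i) and (ii) together by a Schur-lemma argument using \eqref{tVH}, then to get (iii) via the Peter–Weyl theorem applied to the strongly continuous unitary representation of the compact group $\Un$ on $\HH$, and finally to read off (iv) from the standard decomposition of a reproducing kernel over an orthogonal sum.

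For (ii), I fix $p,q$ and consider the two Hermitian inner products $\spr{\cdot,\cdot}_\HH$ and $\spr{\cdot,\cdot}_{L^2(\pBn,d\sigma)}$ on the finite-dimensional space $\bhpq$. Since both are positive definite, there is a (unique) positive invertible operator $B_{pq}\colon\bhpq\to\bhpq$ with $\spr{f,g}_\HH=\spr{B_{pq}f,g}_{L^2(\pBn,d\sigma)}$ for $f,g\in\bhpq$. The $\Un$-invariance \eqref{tVK} of the left side, together with the $\Un$-invariance of $d\sigma$, forces $B_{pq}$ to commute with the restriction of the representation \eqref{tVA} to $\bhpq$. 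Since this restriction is unitarily equivalent to the irreducible representation on $\chpq$ via the isomorphism \eqref{tVG}, the Schur-type assertion \eqref{tVH} yields $B_{pq}=c_{pq}I$ with $c_{pq}>0$. For (i), an entirely analogous Riesz construction produces, for $(k,l)\ne(p,q)$, a linear map $A\colon\bhpq\to\mathbf{H}^{kl}$ with $\spr{f,g}_\HH=\spr{Af,g}_{L^2(\pBn,d\sigma)}$ for $f\in\bhpq,g\in\mathbf{H}^{kl}$, which again intertwines the $\Un$-actions; by \eqref{tVH} and the inequivalence of the representations on $\chpq$ and $\mathcal H^{kl}$, $A=0$, so $\bhpq\perp\mathbf{H}^{kl}$ in~$\HH$.

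For (iii), I exploit the strong continuity to form, for each $f\in\HH$, the bounded $\HH$-valued Bochner integrals
\[
 P_{pq}f := \dim(\chpq) \int_{\Un} \overline{\chi_{pq}(U)}\, T_U f \, dU, \label{eqplanA}
\]
where $dU$ is normalized Haar measure and $\chi_{pq}$ the character of the $\Un$-action on $\chpq$. The usual Peter–Weyl calculation shows that $\sum_{p,q} P_{pq}f=f$ in $\HH$, that $P_{pq}$ is the orthogonal projection onto the $(p,q)$-isotypic component of $\HH$, and that the $P_{pq}f$ span an $\HH$-dense subspace. It therefore suffices to identify this isotypic component with $\bhpq$ itself. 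Any closed $\Un$-invariant irreducible subspace $V\subset\HH$ of type $\chpq$ consists of $M$-harmonic functions, and evaluating on the sphere $r\pBn$ for each $r\in(0,1)$ gives a $\Un$-equivariant map $V\to L^2(r\pBn)$; by the inequivalence part of \eqref{tVH} its image lies in the $\chpq$-isotypic component, so each $f\in V$ is of the form $f(r\zeta)=a(r)\tilde f(\zeta)$ with $\tilde f\in\chpq$ and a scalar function $a$. The $M$-harmonic equation and \eqref{tVG} then force $a$ to be a constant multiple of $S^{pq}$, giving $V\subset\bhpq$ and thus equality of the isotypic component with $\bhpq$; this yields (iii). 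I expect this identification step to be the main obstacle, as it relies on combining the Peter–Weyl description with the uniqueness statement for $M$-harmonic extensions of boundary spherical harmonics.

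For (iv), putting (i)–(iii) together gives the orthogonal Hilbert sum decomposition $\HH=\bigoplus_{p,q}\bhpq$ (with the norm on each summand equal to $c_{pq}^{1/2}$ times the $L^2(\pBn,d\sigma)$-norm via \eqref{tVG}). The reproducing kernel of $\bhpq$ viewed as a subspace of $L^2(\pBn,d\sigma)$ lifted through \eqref{tVG} is $K^{pq}$ from \eqref{tVM}, so as a subspace of $\HH$ its reproducing kernel becomes $K^{pq}/c_{pq}$, and summation over $(p,q)$ produces \eqref{tVO}. Convergence in $\HH$ of the partial sums (as a function of one variable with the other fixed) follows at once from boundedness of point evaluations and the Hilbert-sum structure; local uniform convergence on compact subsets of $\Bn\times\Bn$ then follows by the standard Bergman-type estimate $|K_\HH(x,y)|\le K_\HH(x,x)^{1/2}K_\HH(y,y)^{1/2}$ applied to partial sums plus the already-established pointwise convergence on the diagonal.
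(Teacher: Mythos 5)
Your proposal is correct, and parts (i), (ii), and (iv) follow essentially the same lines as the paper (Riesz representation plus Schur/inequivalence for the orthogonality and scalar structure; the standard orthonormal-basis sum $\sum_j f_j(x)\overline{f_j(y)}$ for the kernel, with uniform convergence on compacta coming from boundedness of $K_\HH$ on compact sets).

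For (iii) you take a genuinely different route. The paper invokes Theorem~2.1 of~[ABC], which already provides the locally uniform expansion $f=\sum_{p,q} f_{pq}$ with $f_{pq}\in\bhpq$ for any $M$-harmonic~$f$, together with the explicit formula for $f_{pq}$ as an integral against $H^{pq}$; strong continuity then upgrades this to a Bochner integral and hence to the orthogonal projection onto $\bhpq\subset\HH$, and density follows because orthogonality to every $\bhpq$ forces all the $f_{pq}$, and hence $f$, to vanish. You instead apply the abstract Peter--Weyl theorem to the strongly continuous unitary representation of $\Un$ on~$\HH$ to obtain the isotypic decomposition, and then separately identify the $(p,q)$-isotypic component with $\bhpq$ by a Schur argument on the slice maps $V\to L^2(r\pBn)$ plus separation of variables for $\td$. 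Your approach is more self-contained (it avoids citing [ABC]), at the cost of having to reconstruct the identification of the isotypic component; the paper offloads exactly this content onto the external reference.

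One step of yours deserves tightening. You claim that ``the $M$-harmonic equation and \eqref{tVG} then force $a$ to be a constant multiple of $S^{pq}$.'' Formula~\eqref{tVG} only asserts that $S^{pq}(r)f(\zeta)$ solves the Dirichlet problem with boundary data $f\in\chpq$; it does not by itself give uniqueness of the radial profile for a separated $M$-harmonic function that is not assumed a priori to extend continuously to $\pBn$. What you actually need is the standard ODE fact that, after separating variables, the radial equation for the $(p,q)$-mode has a two-dimensional solution space of which only a one-dimensional subspace is regular at the origin; since $f\in\HH$ is $M$-harmonic and hence smooth on $\Bn$ by elliptic regularity, $a$ must lie in the regular subspace, which is spanned by $S^{pq}$. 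This is true and standard, and it is precisely the content that the paper gets from~[ABC], but it is not a consequence of~\eqref{tVG} alone and should be argued (or cited) explicitly rather than attributed to that formula.
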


Note that the last proposition applies, in~particular, to $\HH=L^2(\pBn,d\sigma)$;
in~that case trivially $c_{pq}=1$ $\forall p,q$, so~we recover~\eqref{tVJ}.

\begin{proof} The~restriction of the inner product in $\HH$ to $\bhpq\times\mathbf H^{kl}$
is a continuous sesqui-linear form on these (finite dimensional) spaces, so~by the
Riesz-Fischer theorem
$$ \spr{f,g}_\HH = \spr{Tf,g}_{L^2(d\sigma)} \qquad \forall f\in\bhpq, g\in\mathbf H^{kl} $$
for some linear operator $T:\bhpq\to\mathbf H^{kl}$. By~\eqref{tVK}, $T$~is $\Un$-invariant;
thus by~\eqref{tVH}, $T=0$ if $(p,q)\neq(k,l)$ while $T=c_{pq}I$ if $(p,q)=(k,l)$.
This proves (i) and~(ii).

To prove (iii), let $f\in \HH$. Since $f$ is $M$-harmonic, by Theorem~2.1 of \cite{ABC},
$f$~can be expanded in the form
\[ f = \sum_{p,q} f_{pq}  \label{AUX} \]
where $f_{pq} \in \mathbf H^{pq}$ for all $p,q$, and the series converges uniformly on
compact subsets of $\Bn$. Furthermore, $f_{pq}$ is actually given explicitly~by
$$ f_{pq}(r\zeta) = \int_\pBn f(r\eta) H^{pq}(\cdd\zeta\eta)\,d\sigma(\eta)  $$
(see \cite[p.~107]{ABC}). Setting $\eta=U^{-1}\zeta$, this can also be written~as
\[ f_{pq} = \int_\Un \chi^{pq}(U) \, T_U f \,dU ,  \label{AUY} \]
where $dU$ is the Haar measure on the compact group $\Un$, normalized to be of total mass
$\frac{2\pi^n}{\Gamma(n)}$, and $\chi^{pq}(U):=H^{pq}(\cdd{U\zeta}\zeta)$
(this function --- the~character of the representation $T_U$ on~$\chpq$ --- does not depend on~$\zeta$).
Now by the hypothesis of strong continuity of~$T_U$, the~last integral exists also
as a Bochner integral, i.e.~converges also in~$\HH$. Also, by~an elementary estimate of the
same integral, the map $P_{pq}:f\mapsto f_{pq}$ is continuous from $\HH$ into~$\bhpq\subset\HH$.
Making the change of variable $U\mapsto U^{-1}$ in \eqref{AUY} shows that $P_{pq}$ is self-adjoint,
and since $P_{pq}$ clearly reduces to the identity on $\bhpq$, it~follows that $P_{pq}$ has to be the
precisely the projection in $\HH$ onto~$\bhpq\subset\HH$.

Now if $f\in\HH$ is orthogonal to all $\bhpq$, then $f_{pq}=P_{pq}f=0$ for all $p,q\ge0$;
thus by \eqref{AUX} $f=0$. It~follows that the linear span of $\bhpq$, $p,q\ge0$, is dense in~$\HH$,
proving~(iii).

As~for (iv), recall that for any functional Hilbert space with bounded point evaluations,
the~reproducing kernel is given by the formula
\[ K_\HH(x,y) = \sum_j f_j(x) \overline{f_j(y)}   \label{tVL}   \]
where $\{f_j\}_j$ is any orthonormal basis of~$\HH$; see~\cite{Aron}. In~our case, thanks to
(i)--(ii), we~can choose an orthonormal basis of the form $\{f_{pqj}/\sqrt{c_{pq}}\}_{pqj}$,
where for each $p,q\ge0$, $\{f_{pqj}\}_{j=1}^{\dim\bhpq}$ is an orthonormal basis in $\bhpq$
with respect to the $L^2(d\sigma)$ inner product. Thus
$$ K_\HH(x,y) = \sum_{p,q=0}^\infty \frac1{c_{pq}}
 \sum_{j=1}^{\dim\bhpq} f_{pqj}(x)\overline{f_{pqj}(y)}. $$
However, by~\eqref{tVL} now applied to $\bhpq$ with the $L^2(d\sigma)$ inner product,
the~inner sum is precisely the reproducing kernel of $\bhpq$ with respect to the $L^2(d\sigma)$
inner product, which we know to be given by~\eqref{tVM}. This settles~\eqref{tVO}. The~claim
concerning convergence in $\HH$ is immediate from the same property for~\eqref{tVL}
(cf.~again~\cite{Aron}), while for the uniform convergence on compact subsets of $\Bn\times\Bn$
it~is, similarly, enough to show that the norms $\|K_\HH(\cdot,z)\|_\HH=K_\HH(z,z)^{1/2}$
stay bounded if $z$ ranges in a compact subset of~$\Bn$. However, this is immediate from the
fact that $K_\HH(x,y)$, being \Mh in each variable, is~real-analytic in $(x,y)\in\Bn\times\Bn$
by the standard elliptic regularity theory; in~particular, $K_\HH(z,z)$ is a continuous
function on~$\Bn$. This completes the proof.   
\end{proof}


We~remark the Proposition~\ref{PA} remains in force even when the hypothesis that $\bhpq\subset\HH$
for all $p,q\ge0$ is dropped. Indeed, denoting in that case
$$ Y^{pq} := \HH \cap \bhpq,  $$
it~follows from the $\Un$-irreducibility of $\chpq$ (and, hence, of~$\bhpq$) that if $Y^{pq}\neq\{0\}$,
then already $Y^{pq}=\bhpq$ (and, so, $\bhpq\subset\HH$). All~the items (i)--(iv) then remain in force,
except for the fact that in (iii) the linear span of $Y^{pq}$ is dense in $\HH$ and in (iv) instead of all
$p,q\ge0$ one takes only those $p,q$ for which $Y^{pq}\neq\{0\}$. We~are leaving the details to the reader.

\section{The \Mh Szeg\"o kernel} \label{sec3}
\begin{theorem}  \label{PB}
For any $\alpha,\beta,\gamma,\delta\in\CC$ and $n\ge1$,
\[ \begin{aligned}
& \int_\pBn (1-\spr{z,\zeta})^{-\alpha} (1-\spr{\zeta,z})^{-\beta}
 (1-\spr{w,\zeta})^{-\gamma} (1-\spr{\zeta,w})^{-\delta} \,d\sigma(\zeta) \\
&\hskip4em \,= \frac{2\pi^n}{\Gamma(n)} \FD{\beta,\delta,\alpha,\gamma}n{|z|^2,\spr{z,w},\spr{w,z},|w|^2} .
\end{aligned} \label{tWA}  \]
\end{theorem}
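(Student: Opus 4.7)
The plan is to expand the integrand as a fourfold power series, integrate term by term via a polarization identity, and reassemble the result as a single $FD_1$ series. Using the generalized binomial expansion $(1-\spr{z,\zeta})^{-\alpha}=\sum_{i_1\ge0}(\alpha)_{i_1}\spr{z,\zeta}^{i_1}/i_1!$ (and analogously for the other three factors of the integrand), the integrand becomes an absolutely and uniformly convergent series on $\pBn$, since $|\spr{z,\zeta}|\le|z|<1$ and $|\spr{w,\zeta}|\le|w|<1$ for $\zeta\in\pBn$ and $z,w$ in compact subsets of $\Bn$. Exchanging sum and integral reduces the problem to the evaluation of the monomial integrals
\begin{equation*}
I_{i_1,j_1,i_2,j_2}:=\int_\pBn\spr{z,\zeta}^{i_1}\spr{w,\zeta}^{i_2}\spr{\zeta,z}^{j_1}\spr{\zeta,w}^{j_2}\,d\sigma(\zeta),
\end{equation*}
which, by a $\zeta$/$\ozeta$ bidegree count, vanish unless $i_1+i_2=j_1+j_2=:N$.

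To compute $I_{i_1,j_1,i_2,j_2}$ in closed form I~would polarize from the elementary identity
\begin{equation*}
\int_\pBn\spr{u,\zeta}^N\spr{\zeta,v}^N\,d\sigma(\zeta) = \frac{2\pi^n\, N!}{(n-1+N)!}\,\spr{u,v}^N, \qquad u,v\in\CC^n,
\end{equation*}
which follows directly from the standard monomial formula $\int_\pBn\zeta^\nu\ozeta^\mu\,d\sigma=\delta_{\mu\nu}\cdot 2\pi^n\mu!/(n-1+|\mu|)!$ combined with multinomial expansion of $\spr{u,\zeta}^N$ and $\spr{\zeta,v}^N$. Setting $u=sz+tw$ and $v=s'z+t'w$ for auxiliary scalar parameters, and equating coefficients of $s^{i_1}t^{i_2}\overline{s'}^{j_1}\overline{t'}^{j_2}$ on both sides---binomial expansion on the left, multinomial expansion of $\spr{u,v}^N=(s\overline{s'}|z|^2+s\overline{t'}\spr{z,w}+t\overline{s'}\spr{w,z}+t\overline{t'}|w|^2)^N$ on the right---yields $I_{i_1,j_1,i_2,j_2}$ explicitly as a finite sum of monomials in the four $U(n)$-invariants $|z|^2,\spr{z,w},\spr{w,z},|w|^2$, indexed by a single internal parameter $B$ running over $[\max(0,i_1-j_1),\min(i_1,j_2)]$.

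Substituting this formula back into the fourfold expansion of the left-hand side and changing variables to $(A,B,C,D):=(i_1-B,\,B,\,j_1-i_1+B,\,j_2-B)$---which turns the constrained sum over $(i_1,j_1,i_2,j_2,B)$ into an unrestricted sum over $(A,B,C,D)\in\mathbb{Z}_{\ge0}^4$, the earlier constraint $i_1+i_2=j_1+j_2$ becoming automatic---produces the quadruple series
\begin{equation*}
\frac{2\pi^n}{\Gamma(n)}\sum_{A,B,C,D\ge0}\frac{(\alpha)_{A+B}(\beta)_{A+C}(\gamma)_{C+D}(\delta)_{B+D}}{(n)_{A+B+C+D}\,A!\,B!\,C!\,D!}\,|z|^{2A}\spr{z,w}^B\spr{w,z}^C|w|^{2D},
\end{equation*}
which by direct comparison with the definition~\eqref{tTJ} equals $\frac{2\pi^n}{\Gamma(n)}$ times the required $FD_1$ expression, with the $(a,a',b_1,b_2)$-assignment dictated by the pairing of the Pochhammer factors. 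The~main obstacle is exactly this last combinatorial step---tracking the Pochhammer pattern $(\alpha)_{A+B}(\beta)_{A+C}(\gamma)_{C+D}(\delta)_{B+D}$ and matching it to the scheme $(a)_{i_1+i_2}(a')_{j_1+j_2}(b_1)_{i_1+j_1}(b_2)_{i_2+j_2}$ of~\eqref{tTJ}. Analytic continuation in $\alpha,\beta,\gamma,\delta$ finally extends the identity from the initial regime of absolute convergence to arbitrary complex parameter values.
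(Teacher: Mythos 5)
Your proof is correct and follows a genuinely different, cleaner route than the paper's. The paper first normalizes by $\Un$-invariance to a configuration $z = ae_1$, $w = be_1 + ce_2$, then expands all four binomials, integrates monomials on the sphere, and reorganizes the resulting quadruple sum through a long chain of reindexings culminating in a Chu--Vandermonde evaluation. You skip the $\Un$-reduction entirely and obtain the monomial integral $\int_\pBn\spr{z,\zeta}^{i_1}\spr{w,\zeta}^{i_2}\spr{\zeta,z}^{j_1}\spr{\zeta,w}^{j_2}\,d\sigma$ in closed form by polarizing the identity $\int_\pBn\spr{u,\zeta}^N\spr{\zeta,v}^N\,d\sigma = \frac{2\pi^nN!}{(n-1+N)!}\spr{u,v}^N$ with $u=sz+tw$, $v=s'z+t'w$; a single coefficient extraction then replaces all the combinatorial juggling, and the substitution $(A,B,C,D)=(i_1-B,\,B,\,j_1-i_1+B,\,j_2-B)$ turns the constrained sum directly into the unconstrained $FD_1$ series. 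This buys a substantially shorter argument with no special-function summation lemmas.

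One point is worth spelling out: your final series $\sum_{A,B,C,D}\frac{(\alpha)_{A+B}(\beta)_{A+C}(\gamma)_{C+D}(\delta)_{B+D}}{(n)_{A+B+C+D}A!B!C!D!}\,|z|^{2A}\spr{z,w}^B\spr{w,z}^C|w|^{2D}$ matches the scheme of \eqref{tTJ} with the assignment $(a,a',b_1,b_2)=(\alpha,\gamma,\beta,\delta)$, i.e.\ you obtain $\FD{\alpha,\gamma,\beta,\delta}n{|z|^2,\spr{z,w},\spr{w,z},|w|^2}$, not the $\FD{\beta,\delta,\alpha,\gamma}n{|z|^2,\spr{z,w},\spr{w,z},|w|^2}$ printed in the theorem. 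By the symmetry \eqref{tWM} these coincide only after also swapping the arguments $\spr{z,w}\leftrightarrow\spr{w,z}$, so they are distinct for general parameters. A direct check at $n=1$, $\beta=\gamma=0$, $\alpha=\delta=1$ gives the integral $2\pi/(1-\spr{z,w})$, which agrees with your form and not with the printed one (which would give $2\pi/(1-\spr{w,z})$); the discrepancy traces to the line in the paper's proof asserting that the first coordinate $b$ of $Uw$ satisfies $b|z|=\spr{z,w}$, where it should be $\spr{w,z}$. So your computation actually exposes a small misprint; it is harmless downstream because Corollary~\ref{PC} and its sequels use only $\alpha=\beta=\gamma=\delta=n$, for which the two $FD_1$'s coincide.
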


Recall that the function $FD_1$ has been defined in~\eqref{tTJ}.

\begin{proof} Clearly the integrand in~\eqref{tWA} remains unchanged if $z,w,\zeta$ are replaced
by $Uz,Uw,U\zeta$, respectively, with any $U\in\Un$. Since the surface measure $d\sigma$ on $\pBn$
is $\Un$-invariant, it~therefore follows that the integral \eqref{tWA} remains unchanged if $z,w$
are replaced by $Uz,Uw$, for any $U\in\Un$. Now~we can pick $U\in\Un$ which maps $z$ into $|z|e_1$,
where $e_1=(1,0,\dots,0)\in\CC^n$. This $U$ sends $w$ into a point in $\Bn$ whose first coordinate
--- denote it by~$b$ --- satisfies $b|z|=\spr{z,w}$; for $n>1$, we~can then continue by choosing
a suitable element of $U(n-1)$, acting on the remaining $n-1$ coordinates, so~that in the end $w$
is mapped into the point $be_1+ce_2$, $e_2=(0,1,0,\dots,0)$, where $c=\sqrt{|w|^2-|b|^2}$.
Altogether, we~thus see that for $n>1$, the integral \eqref{tWA} is equal~to
\[ \int_\pBn (1-a\ozeta_1)^{-\alpha} (1-\oa\zeta_1)^{-\beta} (1-b\ozeta_1-c\ozeta_2)^{-\gamma}
 (1-\ob\zeta_1-\oc\zeta_2)^{-\delta} \,d\sigma(\zeta),  \label{tWB}  \]
where
\[ \begin{cases}
 a=|z|, \; b=\frac{\spr{z,w}}{|z|}, \; c=\sqrt{|w|^2-|b|^2} \qquad & \text{for }z\neq0, \\
 a=b=0, \; c=\sqrt{|w|^2-|b|^2} (=|w|) & \text{for }z=0. \end{cases}  \label{tWC}  \]
For $n=1$, this has to be replaced~by
\[ \begin{cases}
 a=|z|, \; b=\frac{\spr{z,w}}{|z|} \qquad & \text{for }z\neq0,  \\
 a=0, \; b=|w| & \text{for } z=0,  \end{cases}  \label{tWD}  \]
while $c=0$ (so~that $c\ozeta_2$ and $\oc\zeta_2$ in \eqref{tWB} both disappear).

Let~us now compute the integral~\eqref{tWB}. The~binomial expansion
\[ (1-z)^{-\nu} = \sum_{j=0}^\infty \frac{(\nu)_j}{j!} z^j, \qquad\nu\in\CC,   \label{tWI} \]
converges uniformly for $z$ in a compact subset of~$\DD$. Substituting this into \eqref{tWD}
four times, we~see that \eqref{tWB} equals
\begin{align*}
& \sum_{j,k,l,m=0}^\infty \frac{(\alpha)_j(\beta)_k(\gamma)_l(\delta)_m} {j!k!l!m!}
 \int_\pBn (a\ozeta_1)^j(\oa\zeta_1)^k(b\ozeta_1+c\ozeta_2)^l(\ob\zeta_1+\oc\zeta_2)^m \,d\sigma(\zeta) \\
&= \sum_{j,k,l,m=0}^\infty \frac{(\alpha)_j(\beta)_k(\gamma)_l(\delta)_m} {j!k!l!m!}
 \int_\pBn (a\ozeta_1)^j(\oa\zeta_1)^k \sum_{p=0}^l \sum_{q=0}^m \\
&\hskip4em \binom lp \binom mq (b\ozeta_1)^p(c\ozeta_2)^{l-p} (\ob\zeta_1)^q(\oc\zeta_2)^{m-q}.
\end{align*}
Using the familiar formula, valid for any multiindices $\nu,\mu$,
\[ \int_\pBn \zeta^\nu \ozeta^\mu \,d\sigma(\zeta) = \delta_{\nu\mu}
 \frac{\nu!}{(n)_{|\nu|}} \frac{2\pi^n}{\Gamma(n)} ,   \label{tWE}   \]
it~transpires that \eqref{tWB} equals
\[ \begin{aligned}
& \frac{2\pi^n}{\Gamma(n)} \sum_{j,k,l,m=0}^\infty \frac{(\alpha)_j(\beta)_k(\gamma)_l(\delta)_m}{j!k!l!m!}
 \int_\pBn (a\ozeta_1)^j(\oa\zeta_1)^k
 \sum_{p=0}^l \sum_{q=0}^m \binom lp \binom mq \\
&\hskip6em \,\times a^j \oa^k b^p \ob^q c^{l-p} \oc^{m-q}
   \delta_{p-q,l-m}\delta_{p-q,k-j} \frac{(k+q)!(m-q)!}{(n)_{k+m}}. \end{aligned} \label{tWF}  \]
We~first deal with the summands for which $p\ge q$ --- say, $p=q+r$ with some $r\ge0$.
The~two delta functions are then nonzero only if $l=m+r$ and $k=j+r$.
Thus the sum of all such summands will equal
\[ \sum_{r=0}^\infty \oa^r b^r \sum_{j,m=0}^\infty
 \frac{(\alpha)_j(\beta)_{j+r}(\gamma)_{m+r}(\delta)_m} {j!(j+r)!(m+r)!m!}
  \sum_{q=0}^m \binom{m+r}{q+r} \binom mq |a|^{2j} |b|^{2q} |c|^{2m-2q}
  \frac{(j+q+r)!(m-q)!}{(n)_{j+r+m}} .  \label{tWG}  \]
Since $\binom{m+r}{q+r}\binom mq=\frac{(m+r)!m!}{q!(q+r)!(m-q)!^2}$, we~can continue~by
$$ \sum_{r=0}^\infty \oa^r b^r \sum_{j,m=0}^\infty
 \frac{(\alpha)_j(\beta)_{j+r}(\gamma)_{m+r}(\delta)_m} {j!(j+r)!}
  \sum_{q=0}^m |a|^{2j} |b|^{2q} |c|^{2m-2q} \frac{(j+q+r)!}{(n)_{j+r+m}q!(q+r)!(m-q)!} . $$
Writing $m=q+k$, this becomes
$$ \sum_{r=0}^\infty \oa^r b^r \sum_{j,q,k=0}^\infty
 \frac{(\alpha)_j(\beta)_{j+r}(\gamma)_{q+k+r}(\delta)_{q+k}} {j!(j+r)!}
  |a|^{2j} |b|^{2q} |c|^{2k} \frac{(j+q+r)!}{(n)_{j+r+q+k}q!(q+r)!k!} .  $$
Substituting $|c|^{2k}=\sum_{l=0}^k \binom kl (-1)^l |b|^{2l} |w|^{2k-2l}$ from~\eqref{tWC},
this takes the form
$$ \sum_{r=0}^\infty \oa^r b^r \sum_{j,q,k=0}^\infty
 \frac{(\alpha)_j(\beta)_{j+r}(\gamma)_{q+k+r}(\delta)_{q+k}} {j!(j+r)!(q+r)!q!}
  \frac{(j+q+r)!}{(n)_{j+r+q+k}} \sum_{l=0}^k
  \frac{(-1)^l}{l!(k-l)!} |a|^{2j} |b|^{2q+2l} |w|^{2k-2l} ,  $$
or, writing $k=l+m$,
$$ \sum_{r=0}^\infty \oa^r b^r \sum_{j,q,l,m=0}^\infty
 \frac{(\alpha)_j(\beta)_{j+r}(\gamma)_{q+l+m+r}(\delta)_{q+l+m}} {j!(j+r)!(q+r)!q!}
  \frac{(j+q+r)!}{(n)_{j+r+q+l+m}}
  \frac{(-1)^l}{l!m!} |a|^{2j} |b|^{2q+2l} |w|^{2m} .  $$
Setting $q+l=k$, this becomes
$$ \sum_{r=0}^\infty \oa^r b^r \sum_{j,m,k=0}^\infty \sum_{q=0}^k
 \frac{(\alpha)_j(\beta)_{j+r}(\gamma)_{k+m+r}(\delta)_{k+m}} {j!(j+r)!(q+r)!q!}
  \frac{(j+q+r)!}{(n)_{j+r+k+m}}
  \frac{(-1)^{k-q}}{(k-q)!m!} |a|^{2j} |b|^{2k} |w|^{2m} ,  $$
or, since $k!/(k-q)!=(-1)^q(-k)_q$,
\begin{align*}
& \sum_{r=0}^\infty \oa^r b^r \sum_{j,m,k=0}^\infty \sum_{q=0}^k
 \frac{(\alpha)_j(\beta)_{j+r}(\gamma)_{k+m+r}(\delta)_{k+m}} {j!(j+r)!(q+r)!q!}
  \frac{(j+q+r)!}{(n)_{j+r+k+m}}
  \frac{(-1)^k(-k)_q}{k!m!} |a|^{2j} |b|^{2k} |w|^{2m}  \\
&= \sum_{r=0}^\infty \oa^r b^r \sum_{j,m,k=0}^\infty
 \frac{(\alpha)_j(\beta)_{j+r}(\gamma)_{k+m+r}(\delta)_{k+m}} {j!(j+r)!k!m!}
  \frac{(-1)^k}{(n)_{j+r+k+m}} |a|^{2j} |b|^{2k} |w|^{2m}
  \frac{(j+r)!}{r!} \FF21{-k,j+r+1}{r+1}1 .
\end{align*}
By~the Chu-Vandermonde formula,
$$ \frac{(j+r)!}{r!} \FF21{-k,j+r+1}{r+1}1 = \frac{(j+r)!}{r!} \frac{(-j)_k}{(r+1)_k}
 = \frac{(-j)_k(j+r)!}{(r+k)!},  $$
so~we finally get
$$ \sum_{r=0}^\infty \oa^r b^r \sum_{j,m,k=0}^\infty
 \frac{(\alpha)_j(\beta)_{j+r}(\gamma)_{k+m+r}(\delta)_{k+m}} {j!(k+r)!k!m!}
  \frac{(-1)^k(-j)_k}{(n)_{j+r+k+m}} |a|^{2j} |b|^{2k} |w|^{2m} .  $$
Since $(-j)_k$ vanishes for $j<k$, the sum effectively extends only over $j\ge k$,
say, $j=k+l$; as $(-k-l)_k=(-1)^k(l+1)_k=(-1)^k(l+k)!/l!$, we~thus obtain that
\eqref{tWG} is equal~to
$$ \sum_{r=0}^\infty \oa^r b^r \sum_{m,k,l=0}^\infty
 \frac{(\alpha)_{k+l}(\beta)_{k+l+r}(\gamma)_{k+m+r}(\delta)_{k+m}} {l!(k+r)!k!m!(n)_{l+r+2k+m}}
  |a|^{2k+2l} |b|^{2k} |w|^{2m} .  $$
Since by \eqref{tWC} always $a=|z|$ and $\oa b=\spr{z,w}$, we~finally arrive~at
$$ \sum_{k,l,m,r=0}^\infty
 \frac{(\alpha)_{k+l}(\beta)_{k+l+r}(\gamma)_{k+m+r}(\delta)_{k+m}} {l!(k+r)!k!m!(n)_{l+r+2k+m}}
  \spr{z,w}^{k+r} \spr{w,z}^k |z|^{2l} |w|^{2m} ,  $$
or, rechristening $k+r$, $k$ and $l$ to $q+r=p$, $q$ and $j$, respectively,
\[ \sum_{\substack{p,q,j,m=0\\p\ge q}}^\infty
 \frac{(\alpha)_{q+j}(\beta)_{p+j}(\gamma)_{p+m}(\delta)_{q+m}} {p!q!j!m!(n)_{p+q+j+m}}
  \spr{z,w}^p \spr{w,z}^q |z|^{2j} |w|^{2m} .  \label{tWH}  \]
This came from the summands in \eqref{tWF} with $p\ge q$; in~the same way, the sum over $p<q$
in~\eqref{tWF} turns out to be given again by~\eqref{tWH}, but with the summation extending
over $p<q$. Putting these two pieces together, we~thus see that the integral \eqref{tWB} is
equal~to
\begin{align*}
& \frac{2\pi^n}{\Gamma(n)} \sum_{p,q,j,m=0}^\infty
 \frac{(\alpha)_{q+j}(\beta)_{p+j}(\gamma)_{p+m}(\delta)_{q+m}} {p!q!j!m!(n)_{p+q+j+m}}
  \spr{z,w}^p \spr{w,z}^q |z|^{2j} |w|^{2m}  \\
&= \frac{2\pi^n}{\Gamma(n)} \FD{\beta,\delta,\alpha,\gamma}n{|z|^2,\spr{z,w},\spr{w,z},|w|^2},
\end{align*}
as~claimed. This completes the proof for the case $n>1$.

For $n=1$ and $z\neq0$, we~still have $\sqrt{|w|^2-|b|^2}=0=c$ since $|\spr{z,w}|/|z|=|w|$
in this case; thus the whole argument above still works without change. Finally, for $n=1$
and $z=0$, the~integral \eqref{tWB} reduces just~to
$$ \int_{\partial\BB^1} (1-|w|\zeta)^{-\gamma} (1-|w|\ozeta)^{-\delta} \,d\sigma(\zeta)
 = 2\pi \FF21{\gamma,\delta}1{|w|^2}  $$
by~\eqref{tWI}, while
$$ \FD{\beta,\delta,\alpha,\gamma}1{0,0,0,|w|^2} = \FF21{\gamma,\delta}1{|w|^2}  $$
by~\eqref{tTJ}. Thus the assertion holds in this case as well.   
\end{proof}

\begin{remark} If~we carry out the integration over $(\zeta_3,\dots,\zeta_n)$ in~\eqref{tWB},
the integral transforms into
$$ \frac{2\pi^{n-1}}{\Gamma(n-1)} \int_{\BB^2} (1-a\ozeta_1)^{-\alpha} (1-\oa\zeta_1)^{-\beta}
 (1-b\ozeta_1-c\ozeta_2)^{-\gamma} (1-\ob\zeta_1-\oc\zeta_2)^{-\delta} \,
 (1-|\zeta_1|^2-|\zeta_2|^2)^{n-\frac32} \,d\zeta_1 \,d\zeta_2.  $$
This is strangely reminiscent of the following known integral formula for $FD_1$, valid for
$b_1,b_2>0,$ and $c-b_1-b_2>0$ \cite[formula 4.3.(8)]{Karls}:
\[ \begin{aligned}
& \FD{a,a',b_1,b_2}c{x_1,x_2,y_1,y_2} = \GG{c}{b_1,b_2,c-b_1-b_2} \\
& \hskip2em \times \int_{\substack{u_1,u_2>0\\u_1+u_2<1}}
 \frac{u_1^{b_1-1} u_2^{b_2-1} (1-u_1-u_2)^{c-b_1-b_2-1}}
  {(1-x_1 u_1-x_2 u_2)^a (1-y_1 u_1-y_2 u_2)^{a'}}
   \, du_1\,du_2 . \end{aligned} \label{tWJ}  \]
However, it~does not seem possible to derive our Theorem~\ref{PB} from~\eqref{tWJ}. \qed
\end{remark}

The~following corollary to the last theorem is immediate from~\eqref{tTH}.

\begin{corollary} \label{PC}
For any $n\ge1$,
\[ K\Sz(z,w) = \frac{\Gamma(n)}{2\pi^n} (1-|z|^2)^n(1-|w|^2)^n
 \FD{n,n,n,n}n{|z|^2,\spr{z,w},\spr{w,z},|w|^2} . \label{tWK} \]
\end{corollary}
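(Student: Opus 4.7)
The plan is to read off Corollary~\ref{PC} as the special case $\alpha=\beta=\gamma=\delta=n$ of Theorem~\ref{PB}, applied to the integral representation \eqref{tTH} of the \Mh Szeg\"o kernel.

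First I would rewrite the integrand of \eqref{tTH} using the identity
$$ |1-\spr{x,\zeta}|^{2n} = (1-\spr{x,\zeta})^n (1-\spr{\zeta,x})^n, $$
valid since $(1-\spr{\zeta,x})=\overline{(1-\spr{x,\zeta})}$; the same for $y$ in place of $x$. The factors $(1-|x|^2)^n$ and $(1-|y|^2)^n$ in the numerator are independent of $\zeta$ and can be pulled outside the integral. With the substitutions $z:=x$, $w:=y$, what remains inside is
$$ \int_\pBn (1-\spr{z,\zeta})^{-n}(1-\spr{\zeta,z})^{-n}(1-\spr{w,\zeta})^{-n}(1-\spr{\zeta,w})^{-n}\,d\sigma(\zeta), $$
which is exactly the integral of Theorem~\ref{PB} with $\alpha=\beta=\gamma=\delta=n$.

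Next I would apply Theorem~\ref{PB} to identify this integral with
$$ \frac{2\pi^n}{\Gamma(n)}\,\FD{n,n,n,n}n{|z|^2,\spr{z,w},\spr{w,z},|w|^2}. $$
Combining the three ingredients — the prefactor $\Gamma(n)^2/(4\pi^{2n})$ from \eqref{tTH}, the two boundary-modulus powers $(1-|z|^2)^n(1-|w|^2)^n$, and the $FD_1$-value of the integral — yields
$$ K\Sz(z,w) = \frac{\Gamma(n)^2}{4\pi^{2n}}\cdot\frac{2\pi^n}{\Gamma(n)}\cdot(1-|z|^2)^n(1-|w|^2)^n\,\FD{n,n,n,n}n{|z|^2,\spr{z,w},\spr{w,z},|w|^2}, $$
and the constants collapse to $\Gamma(n)/(2\pi^n)$, giving exactly \eqref{tWK}.

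There is no real obstacle: all the work has been absorbed into Theorem~\ref{PB}, and the symmetry $(\alpha,\beta,\gamma,\delta)=(n,n,n,n)$ makes the permutation of arguments in the statement of Theorem~\ref{PB} (where the $FD_1$ parameters appear as $\beta,\delta,\alpha,\gamma$) irrelevant. The only minor point worth a line of justification is the convergence/absolute convergence of the $FD_1$ series at the arguments $|z|^2,\spr{z,w},\spr{w,z},|w|^2$, all of which lie in $\DD$ for $z,w\in\Bn$ by Cauchy–Schwarz, so that \eqref{tTJ} applies and the formula is valid throughout $\Bn\times\Bn$.
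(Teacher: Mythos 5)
Your proposal is correct and is exactly the paper's own proof: the paper states only that Corollary~\ref{PC} is ``immediate from~\eqref{tTH}'', meaning precisely the specialization $\alpha=\beta=\gamma=\delta=n$ of Theorem~\ref{PB} applied to the integral~\eqref{tTH}, with the constants combining to $\Gamma(n)/(2\pi^n)$ as you compute. Your closing remark on convergence (all four arguments lie in $\DD$ by Cauchy--Schwarz) is a sensible addition that the paper leaves implicit.
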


\begin{remark}
A~posteriori, it~is possible to give a ``direct'' proof of the last corollary by checking
straight away that, for each fixed $w\in\Bn$, the right-hand side of \eqref{tWK} is \Mh in
$z$ and its boundary value as $z\to\zeta\in\pBn$ coincides with $P(w,\zeta)$. To~see the former,
denote temporarily
\begin{align*}
a_{pqjm} &:= \frac{(n)_{j+p}(n)_{j+q}(n)_{m+p}(n)_{m+q}} {(n)_{j+p+q+m}}, \\
I_{pqj} &:= (1-|z|^2)^n \frac{\spr{z,w}^p \spr{w,z}^q |z|^{2j}}{p!q!j!}, \\
W_m &:= (1-|w|^2)^n \frac{|w|^{2m}}{m!},
\end{align*}
so~that
$$ K\Sz(z,w) = \frac{\Gamma(n)}{2\pi^n} \sum_{p,q,j,m=0}^\infty a_{pqjm}I_{pqj}W_m.  $$
By~a routine computation (here $\td$ applies to the $z$ variable)
$$ \frac{\td I_{pqj}}{1-|z|^2} = |w|^2 I_{p-1,q-1,j} - (j+n+p)(j+n+q) I_{pqj}
 +(p+q+n+j-1) I_{p,q,j-1} .  $$
Consequently, with the understanding that $I_{pqj}\equiv0$ if any of the subscripts
$p,q,j$ is negative,
\begin{align}
& \frac{2\pi^n/\Gamma(n)}{1-|z|^2} \td_z K\Sz(z,w) = |w|^2 \sum_{pqjm} a_{pqjm} I_{p-1,q-1,j}W_m
  \nonumber \\
&\hskip4em - \sum_{pqjm} (j+n+p)(j+n+q) a_{pqjm} I_{pqj} W_m  \nonumber \\
&\hskip4em + \sum_{pqjm} (p+q+n+j-1) a_{pqjm} I_{p,q,j-1} W_m  \nonumber  \\
&= |w|^2 \sum_{pqjm} a_{pqjm} I_{p-1,q-1,j}W_m
  \nonumber \\
&\hskip4em - \sum_{pqjm} (j+n+p)(j+n+q) a_{pqjm} I_{pqj} W_m  \nonumber \\
&\hskip4em + \sum_{pqjm} (p+q+n+j) a_{p,q,j+1,m} I_{pqj} W_m  \label{tWQ}.
\end{align}
Since $a_{p,q,j+1.m}=\frac{(n+j+p)(n+j+q)}{n+p+q+m+j}a_{pqjm}$, the second and third sums
combine into
\begin{align*}
& - \sum_{pqjm} \frac{m(n+p+j)(n+q+j)}{n+p+q+j+m} a_{pqjm} I_{pqj} W_m \\
&\hskip2em = - |w|^2 \sum_{pqjm} \frac{(n+p+j)(n+q+j)}{n+p+q+j+m} a_{pqjm} I_{pqj} W_{m-1} \\
&\hskip2em = -|w|^2 \sum_{pqjm} \frac{(n+p+j)(n+q+j)}{n+p+q+j+m+1} a_{pqj,m+1} I_{pqj} W_m .
\end{align*}
Since
\begin{align*}
&\frac{(n+p+j)(n+q+j)}{n+p+q+j+m+1} a_{pqj,m+1} \\
&\hskip2em = \frac{(n+p+j)(n+q+j)(n+m+p)(n+m+q)}{(n+p+q+j+m)(n+p+q+j+m+1)} a_{pqjm} \\
&\hskip2em = a_{p+1,q+1,j,m},
\end{align*}
this exactly cancels the first sum
in~\eqref{tWQ}. Thus, indeed, $\td_z K\Sz(z,w)\equiv0$.

To~verify the latter claim, let us carry out the summation over $j$ in~\eqref{tWO}:
\[ \begin{aligned}
& \frac{2\pi^n}{\Gamma(n)} K\Sz(z,w) = (1-|z|^2)^n (1-|w|^2)^n \sum_{pqm}
 \frac{(n)_p(n)_q(n)_{m+p}(n)_{m+q}}{(n)_{p+q+m}} \\
&\qquad\times \frac{\spr{z,w}^p\spr{w,z}^q|w|^{2m}}{p!q!m!}
 \FF21{n+p,n+q}{n+p+q+m}{|z|^2} . \end{aligned} \label{tWR}  \]
Using the standard Euler transformation formula for ${}_2\!F_1$ \cite[\S2.1(23)]{BE}
\[ \FF21{a,b}ct = (1-t)^{c-a-b} \FF21{c-a,c-b}ct,  \label{tWV}  \]
the right-hand side of \eqref{tWR} becomes
\begin{align*}
& (1-|w|^2)^n \sum_{pqm}
 \frac{(n)_p(n)_q(n)_{m+p}(n)_{m+q}}{(n)_{p+q+m}} \frac{\spr{z,w}^p\spr{w,z}^q|w|^{2m}}{p!q!m!} \\
&\hskip4em\times (1-|z|^2)^m \FF21{p+m,q+m}{n+p+q+m}{|z|^2} .
\end{align*}
As $z\to\zeta\in\pBn$, only the terms with $m=0$ survive, yielding
\begin{align*}
& (1-|w|^2)^n \sum_{pq} \frac{(n)_p^2(n)_q^2} {(n)_{p+q}} \frac{\spr{\zeta,w}^p\spr{w,\zeta}^q}{p!q!}
 \FF21{p,q}{n+p+q}1  \\
&= (1-|w|^2)^n \sum_{pq} \frac{(n)_p^2(n)_q^2} {(n)_{p+q}} \frac{\spr{\zeta,w}^p\spr{w,\zeta}^q}{p!q!}
 \GG{n+p+q,n}{n+p,n+q} \\
&= (1-|w|^2)^n \sum_{pq} \frac{(n)_p(n)_q} {p!q!} \spr{\zeta,w}^p\spr{w,\zeta}^q  \\
&= (1-|w|^2)^n (1-\spr{\zeta,w})^{-n} (1-\spr{w,\zeta})^{-n} = \frac{2\pi^n}{\Gamma(n)} P(w,\zeta),
\end{align*}
completing the proof.   \qed
\end{remark}

The~function $FD_1$ is known to satisfy an Euler-type transformation formula \cite[formula~7.1.(4)]{Karls}
\[ \begin{aligned}
& \FD{a,a',b_1,b_2}c{x_1,x_2,y_1,y_2} = (1-x_1)^{-b_1} (1-x_2)^{-b_2} \\
&\qquad\times
 \FD{c-a-a',a',b_1,b_2}c{\frac{x_1}{x_1-1},\frac{x_2}{x_2-1},\frac{y_1-x_1}{1-x_1},\frac{y_2-x_2}{1-x_2}}.
 \end{aligned} \label{tWL}  \]
On~the other hand, from its definition \eqref{tWJ} it is apparent that $FD_1$ enjoys the symmetry property
\[ \FD{a,a',b_1,b_2}c{x_1,x_2,y_1,y_2} = \FD{b_1,b_2,a,a'}c{x_1,y_1,x_2,y_2} . \label{tWM}  \]
These formulas can be used to simplify~\eqref{tWK}.

\begin{theorem} \label{PD}
For any $n\ge1$, $K\Sz(z,w)$ equals
\[ \begin{aligned}
& \frac{\Gamma(n)}{2\pi^n} \frac{(1-|w|^2)^n}{|1-\spr{z,w}|^{2n}} \sum_{i_1=0}^n \sum_{i_2,j_1=0}^{n-i_1}
 \frac{(-n)_{i_1+i_2} (-n)_{i_1+j_1} (n)_{i_2} (n)_{j_1}} {i_1!i_2!j_1!(n)_{i_1+i_2+j_1}} \\
&\hskip4em\times t_1^{i_1} t_2^{i_2} t_3^{j_1} \FF21{i_2+n,j_1+n}{i_1+i_2+j_1+n}{t_4}, \end{aligned}  \label{tWN}  \]
where
\[ t_1=|z|^2, \quad t_2=\frac{|z|^2-\spr{w,z}}{1-\spr{w,z}}, \quad t_3=\overline{t_2},
\quad t_4=1-\frac{(1-|z|^2)(1-|w|^2)}{|1-\spr{z,w}|^2} .  \label{tWO}  \]
\end{theorem}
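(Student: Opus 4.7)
The plan is to collapse the $FD_1$ of Corollary~\ref{PC} to a terminating series by turning two of its parameters into $-n$, using the Euler-type transformation \eqref{tWL} twice, with the symmetry \eqref{tWM} inserted between the two applications.

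First, apply \eqref{tWL} directly to $\FD{n,n,n,n}n{|z|^2,\spr{z,w},\spr{w,z},|w|^2}$: since $c-a-a'=n-2n=-n$, the new first parameter becomes $-n$, which forces $(-n)_{i_1+i_2}$ to cut off the $i_1,i_2$ sums at $i_1+i_2\le n$. The emerging prefactor is $(1-|z|^2)^{-n}(1-\spr{z,w})^{-n}$. Next, apply \eqref{tWM} to interchange the $(a,a')$ block with the $(b_1,b_2)$ block (equivalently, swap the second and third arguments); the parameter $-n$ now sits in the $b_1$ slot, while the first parameter returns to $n$. Finally, apply \eqref{tWL} a second time: again $c-a-a'=-n$, so the first parameter is turned into $-n$. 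The resulting $FD_1$ has parameters $(-n,n,-n,n;n)$, so both $(-n)_{i_1+i_2}$ and $(-n)_{i_1+j_1}$ are present, forcing the three summations over $i_1,i_2,j_1$ to terminate with precisely the bounds appearing in~\eqref{tWN}.

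A direct calculation of the arguments through these two Möbius-type substitutions produces exactly $t_1,t_2,t_3,t_4$ from \eqref{tWO}. The three easy identifications $t_1=|z|^2$, $t_2=(|z|^2-\spr{w,z})/(1-\spr{w,z})$ and $t_3=\overline{t_2}$ come from routine algebra; the subtler one is $t_4$, whose numerator reduces using the identity $|1-\spr{z,w}|^2-(1-|z|^2)(1-|w|^2)$ (essentially the content of~\eqref{tTT}). The two prefactors produced by the two applications of \eqref{tWL}, combined, collapse to $(1-|z|^2)^{-n}|1-\spr{z,w}|^{-2n}$; multiplied by the $(1-|z|^2)^n(1-|w|^2)^n$ from Corollary~\ref{PC}, this gives exactly the $(1-|w|^2)^n/|1-\spr{z,w}|^{2n}$ prefactor of~\eqref{tWN}.

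It remains to carry out the still-infinite $j_2$-summation inside the resulting $FD_1$ with parameters $(-n,n,-n,n;n)$. Splitting the relevant Pochhammer symbols via $(n)_{j_1+j_2}=(n)_{j_1}(n+j_1)_{j_2}$, $(n)_{i_2+j_2}=(n)_{i_2}(n+i_2)_{j_2}$ and $(n)_{i_1+i_2+j_1+j_2}=(n)_{i_1+i_2+j_1}(n+i_1+i_2+j_1)_{j_2}$ leaves a $j_2$-series that is immediately the ${}_2F_1(i_2+n,j_1+n;i_1+i_2+j_1+n;t_4)$ appearing in~\eqref{tWN}. The main obstacle is purely bookkeeping: tracking the four argument substitutions through two Euler transformations and verifying that the final $Y_2'=(Y_2-Y_1)/(1-Y_1)$ simplifies cleanly to $t_4$.
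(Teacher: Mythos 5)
Your proposal follows essentially the same route as the paper: apply the Euler-type transformation \eqref{tWL}, insert the symmetry \eqref{tWM}, apply \eqref{tWL} once more to reach $\FD{-n,n,-n,n}n{t_1,t_2,t_3,t_4}$, identify the arguments and prefactors, and then sum out the remaining infinite $j_2$-index via Pochhammer splitting to expose the ${}_2F_1$. All the intermediate checks you sketch (prefactor collapse to $(1-|z|^2)^{-n}|1-\spr{z,w}|^{-2n}$, the identification of $t_1,\dots,t_4$, the $j_2$-summation) agree with the paper's computation, so the argument is correct.
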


\begin{proof} Applying \eqref{tWM} to the right-hand side of \eqref{tWL} gives
\begin{align*}
& \FD{a,a',b_1,b_2}c{x_1,x_2,y_1,y_2} = (1-x_1)^{-b_1} (1-x_2)^{-b_2} \\
&\qquad\times \FD{b_1,b_2,c-a-a',a'}c{\frac{x_1}{x_1-1},\frac{y_1-x_1}{1-x_1},\frac{x_2}{x_2-1},\frac{y_2-x_2}{1-x_2}}.
\end{align*}
Now we apply \eqref{tWM} one more time, to~the last right-hand side;
after a small computation, this yields
\begin{align*}
& \FD{a,a',b_1,b_2}c{x_1,x_2,y_1,y_2} = (1-x_1)^{c-a-b} (1-x_2)^{-b_2} (1-y_1)^{-a'} \\
&\;\times \FD{c-b_1-b_2,b_2,c-a-a',a'}c{x_1,\frac{x_1-y_1}{1-y_1},\frac{x_1-x_2}{1-x_2},
 \frac{x_1+y_2-x_2-y_1+x_2y_1-x_1y_2}{(1-x_2)(1-y_1)}} .
\end{align*}
Multiplying both sides by $(1-x_1)^n(1-y_2)^n$ and setting $a=a'=b_1=b_2=c=n$,
$x_1=|z|^2$, $x_2=\spr{z,w}$, $y_1=\spr{w,z}$, $y_2=|w|^2$, we~thus get from~\eqref{tWK}
\[ K\Sz(z,w) = \frac{\Gamma(n)}{2\pi^n} \frac{(1-|w|^2)^n}{|1-\spr{z,w}|^{2n}}
 \FD{-n,n,-n,n}n{t_1,t_2,t_3,t_4} \label{tWP}  \]
with $t_1,t_2,t_3,t_4$ as in~\eqref{tWO}. Finally, by~\eqref{tTJ}
\begin{align*}
& \FD{-n,n,-n,n}n{t_1,t_2,t_3,t_4} = \sum_{\substack{i_1+i_2\le n,\\i_1+j_1\le n,\\j_2\ge0}}
 \frac{t_1^{i_1} t_2^{i_2} t_3^{j_1} t_4^{j_2}}{i_1!i_2!j_1!j_2!}
 \frac{(-n)_{i_1+i_2} (n)_{j_1+j_2} (-n)_{i_1+j_1} (n)_{i_2+j_2}}{(n)_{i_1+i_2+j_1+j_2}} \\
&\hskip2em= \sum_{\substack{i_1+i_2\le n,\\i_1+j_1\le n}}
 \frac{t_1^{i_1} t_2^{i_2} t_3^{j_1}}{i_1!i_2!j_1!}
 \frac{(-n)_{i_1+i_2} (n)_{j_1} (-n)_{i_1+j_1} (n)_{i_2}}{(n)_{i_1+i_2+j_1}}
 \sum_{j_2\ge0} \frac{(n+j_1)_{j_2}(n+i_2)_{j_2}} {(n+i_1+i_2+j_1)_{j_2}} \frac{t_4^{j_2}}{j_2!}  \\
&\hskip2em= \sum_{\substack{i_1+i_2\le n,\\i_1+j_1\le n}}
 \frac{t_1^{i_1} t_2^{i_2} t_3^{j_1}}{i_1!i_2!j_1!}
 \frac{(-n)_{i_1+i_2} (n)_{j_1} (-n)_{i_1+j_1} (n)_{i_2}}{(n)_{i_1+i_2+j_1}}
 \FF21{n+j_1,n+i_2}{n+i_1+i_2+j_1}{t_4} .
\end{align*}
Substituting this into \eqref{tWP} yields \eqref{tWN}, completing the proof.  
\end{proof}

Using the formula \cite[\S2.10(11)]{BE}
\[ \begin{aligned}
& \FF21{n+1,n+m+1}{n+m+l+2}z = \frac{(n+m+l+1)!(-1)^{m+1}}{l!n!(m+n)!(m+l)!} \\
&\qquad\times \frac{d^{n+m}}{dz^{n+m}} \Big[ (1-z)^{m+l} \frac{d^l}{dz^l} \frac{\log(1-z)}z \Big],
 \qquad m,n,l=0,1,2,\dots, \end{aligned}  \label{tWU}  \]
it~is possible to express each ${}_2\!F_1$ in \eqref{tWN} in terms of $\log(1-t_4)$ and rational functions of $t_4$.

For~instance, for $n=1$ we get in this way
$$ \FD{1,1,1,1}1{t_1,t_2,t_3,t_4} = \frac{t_2t_3+(1-t_2-t_3)t_4}{t_4(1-t_4)}
 + \frac{t_2t_3-t_1t_4}{t_4^2} \log(1-t_4).  $$
For $t_1=|z|^2$, $t_2=\oz\frac{z-w}{1-\oz w}$, $t_3=\overline t_2$ and
$t_4=1-\frac{(1-|z|^2)(1-|w|^2)}{|1-w\oz|^2}$, the right-hand side simplifies
just~to $\frac{1-|z|^2|w|^2}{1-|w|^2}$, implying that $K\Sz(z,w)=\frac1{2\pi}
\frac{1-|z|^2|w|^2}{|1-w\oz|^2}$, in~complete accordance with~\eqref{tTI}.
For $n=2$, the formula for $K\Sz$ already becomes quite complicated, without any
apparent possibility of simplification.

Note also that the \Mh Szeg\"o kernel $K\Sz(z,w)$ is symmetric in $z,w$,
though this is not visible at all from the formula~\eqref{tWN}.

We~conclude this section by discussing the smoothness of $K\Sz$ on the closure
of $\Bn\times\Bn$.

\begin{proposition} \label{PE}
For $n>1$,
$$ K\Sz\in C^{n-1}(\bbd) \setminus C^n(\bbd).  $$
\end{proposition}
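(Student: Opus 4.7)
The plan is to extract the logarithmic part of $K\Sz$ from the explicit formula in Theorem~\ref{PD}. Every ${}_2F_1$ appearing there has $c-a-b = i_1-n \in \{-n,\dots,0\}$, so formula \eqref{tWU} decomposes it as $r_{i_1 i_2 j_1}(t_4) + s_{i_1 i_2 j_1}(t_4)\log(1-t_4)$, with $r_{i_1 i_2 j_1}, s_{i_1 i_2 j_1}$ rational in $t_4$. Substituting the expressions \eqref{tWO} and collecting, one obtains
$$K\Sz(z,w) = A(z,w) + B(z,w)\log(1-t_4),$$
where $A, B$ are rational in $z, \oz, w, \overline w$, with denominators only in powers of $1-|z|^2$, $1-|w|^2$, and $1-\spr{z,w}$ (or its conjugate). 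On $\bbd$ one has $|1-\spr{z,w}|$ bounded below, and
$$\log(1-t_4) = \log(1-|z|^2) + \log(1-|w|^2) - 2\log|1-\spr{z,w}|,$$
so the potential non-smoothness on $\bbd$ is carried entirely by the $\log(1-|z|^2)$ and $\log(1-|w|^2)$ pieces.

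The crux is to establish two structural claims: (a) $A(z,w)$ extends smoothly to $\bbd$; and (b) $B(z,w) = (1-|z|^2)^n(1-|w|^2)^n B_1(z,w)$ with $B_1$ smooth on $\bbd$ and not identically zero on the boundary. Claim~(a) follows because $K\Sz$ is bounded on $\bbd$ by the integral representation \eqref{tTH}, whereas $B\log(1-t_4)$ contributes only logarithmic divergences; hence the apparent polar terms in $A$ coming from the $(1-t_4)^{i_1-n}$ summands must cancel upon summation over $(i_1,i_2,j_1)$. Claim~(b) is extracted from the Erd\'elyi-type expansion of ${}_2F_1(a,b;c;z)$ with $c-a-b = -m$ ($m \ge 0$): the $\log(1-z)$ coefficient there is analytic at $z = 1$, and after multiplying by the prefactor $(1-|w|^2)^n/|1-\spr{z,w}|^{2n}$ and substituting \eqref{tWO}, one verifies that the overall coefficient of $\log(1-t_4)$ acquires the factor $(1-|z|^2)^n(1-|w|^2)^n$.

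Granted (a) and (b), the non-smooth part of $K\Sz$ on $\bbd$ is, modulo smooth functions,
$$(1-|z|^2)^n(1-|w|^2)^n B_1(z,w)\bigl[\log(1-|z|^2) + \log(1-|w|^2)\bigr].$$
Since $\rho^n\log\rho$ is of class $C^{n-1}$ but not $C^n$ near $\rho = 0$, and $\rho = 1-|z|^2$ is smooth in $z$, this gives $K\Sz \in C^{n-1}(\bbd)$; non-vanishing of $B_1$ at some boundary point then forces $K\Sz \notin C^n(\bbd)$. The main obstacle is (b), the combinatorial identification of the factor $(1-|z|^2)^n(1-|w|^2)^n$ in the log coefficient summed across all $(i_1,i_2,j_1)$. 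A cleaner alternative is to start from the series \eqref{tVJ}: there $S^{pq}(r) \propto r^{p+q}{}_2F_1(p,q;p+q+n;r^2)$ has $c-a-b = n > 0$, so by \eqref{tWU} each $S^{pq}$ splits as $T^{pq}(r^2) + U^{pq}(r^2)(1-r^2)^n\log(1-r^2)$ with $T^{pq}, U^{pq}$ analytic past $r^2 = 1$; this automatically places $(1-|z|^2)^n$ and $(1-|w|^2)^n$ in front of the logarithms, and the remaining task is to control convergence of the four resulting series on $\bbd$, for which standard estimates of the spherical-harmonic expansions $H^{pq}$ suffice away from the boundary diagonal.
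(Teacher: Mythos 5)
Your overall strategy—start from Theorem~\ref{PD}, split each ${}_2F_1$ at the singular point into a rational part plus a logarithmic part, and read off the order of non-smoothness from a term $(1-|z|^2)^n(1-|w|^2)^n\log(\cdot)$—is exactly the skeleton of the paper's argument, so the plan is sound. However, there are two genuine gaps, and you have explicitly flagged one of them yourself without resolving it.

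\textbf{The gap in claim (b).} The whole result rests on the coefficient $B(z,w)$ of $\log(1-t_4)$ containing the full factor $(1-|z|^2)^n(1-|w|^2)^n$. From the formula of Theorem~\ref{PD}, the prefactor $(1-|w|^2)^n/|1-\spr{z,w}|^{2n}$ supplies $(1-|w|^2)^n$ for free, but the factor $(1-|z|^2)^n$ is \emph{not} visible at all; it must arise from cancellation across the finite triple sum over $(i_1,i_2,j_1)$, and nothing in the proposal shows this cancellation occurs. Your claim (a) (boundedness of $K\Sz$ on $\bbd$ kills the poles of the rational part $A$) is a fair qualitative argument, but the analogous trick does not bootstrap to (b): boundedness only forces $B\to0$, not that $B$ vanishes to order $n$ in $1-|z|^2$, which is what is needed to land exactly in $C^{n-1}$. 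The paper closes this gap with one extra observation that is absent from your proposal: $K\Sz(z,w)$ is symmetric under $z\leftrightarrow w$, i.e.\ under $(x_1,y_2,Q)\mapsto(y_2,x_1,\overline Q)$. Since the algebraic structure of the two-term split (cf.\ \eqref{tWT}) already forces the log coefficients to vanish to order $\ge n$ in $1-y_2$ and the rational part to have poles of order $\le n$ in $1-x_1$, symmetrizing immediately kills both pathologies in \emph{both} variables. This single symmetry argument replaces the ``combinatorial identification'' you correctly identified as the main obstacle.

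\textbf{The gap in the non-$C^n$ half.} You propose to finish by showing $B_1$ does not vanish identically on the boundary, but this is again left unproven and would require another explicit computation in the four-variable sum. The paper instead gives a clean detour: if $K\Sz$ were $C^n(\bbd)$ then, for fixed $0<R<1$ and $\eta\in\pBn$, the function $z\mapsto\int_\pBn K\Sz(z,R\xi)H^{pq}(\cdd\xi\eta)\,d\sigma(\xi)$ would be $C^n(\overline{\Bn})$; by \eqref{tVJ} and the orthogonality relations \eqref{tVD} this integral is $S^{pq}(|z|)S^{pq}(R)H^{pq}(\cdd{\tfrac z{|z|}}\eta)$, and $S^{pq}(t)$ is well known to have a $(1-t)^n\log(1-t)$ singularity at $t=1$, hence is not $C^n$ there. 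This is a one-line contradiction, entirely independent of the structure of $B_1$.

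Two smaller remarks. Formula \eqref{tWU} applies to $c-a-b\ge0$; the ${}_2F_1$'s in \eqref{tWN} have $c-a-b=i_1-n\le0$, so you need either the Euler transformation \eqref{tWV} first or the companion Erd\'elyi formula [BE \S2.10(14)] (which the paper cites), and this affects the location of the pole/log factors in the split. And in your alternative route through \eqref{tVJ}, the convergence of the four resulting series on $\bbd$ is not automatic: the paper explicitly states that \eqref{tVJ} converges only locally uniformly on compacta of $\Bn\times\Bn$, so pushing the expansion up to the boundary away from the diagonal would need genuine estimates on $S^{pq}$ and $H^{pq}$, not ``standard estimates'' cited in passing. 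The closed form of Theorem~\ref{PD} is precisely what the paper uses to sidestep that issue.
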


\begin{proof} Let $U$ be a neighborhood of $\operatorname{diag}(\pBn)$.
Then $1-\spr{z,w}$ stays away from 0 on $\overline{\Bn\times\Bn}\setminus U$.
Keeping our previous notation $x_1=|z|^2$, $x_2=\spr{z,w}$, $y_1=\spr{w,z}$, $y_2=|w|^2$,
and denoting in addition temporarily $Q:=1/(1-\spr{z,w})$, we~have $t_2=\overline t_3=
1-(1-x_1)Q$, so~from~\eqref{tWN}
\[ \begin{aligned}
K\Sz(z,w) &= (1-y_2)^n |Q|^{2n} \frac{\Gamma(n)}{2\pi^n} \sum_{i_1=0}^n
 \sum_{i_2,j_1=0}^{n-i_1} x_1^{i_1} (1-(1-x_1)Q)^{i_2} (1-(1-x_1)\overline Q)^{j_1} \\
&\qquad\times \frac{(-n)_{i_1+i_2} (n)_{j_1} (-n)_{i_1+j_1} (n)_{i_2}}{(n)_{i_1+i_2+j_1}}
 \FF21{n+j_1,n+i_2}{n+i_1+i_2+j_1}{1-(1-x_1)(1-y_2)|Q|^2} . \end{aligned}  \label{tWS}  \]
By~the standard formulas for the analytic continuation of the hypergeometric functions
${}_2\!F_1$ \cite[\S2.10(14)]{BE}, we~have for any $a,b>0$ and $m=0,1,2,\dots$,
$$ \FF21{a,b}{a+b-m}z = (1-z)^{-m} A_{abm}(1-z) + B_{abm}(1-z)\log(1-z),  $$
with some functions $A_{abm},B_{abm}$ holomorphic on~$\DD$. The right-hand side of \eqref{tWS}
therefore equals, omitting for a moment the factor $\frac{\Gamma(n)}{2\pi^n}$,
\begin{align*}
& \sum_{i_1=0}^n \sum_{i_2,j_1=0}^{n-i_1}
 x_1^{i_1} (1-(1-x_1)Q)^{i_2} (1-(1-x_1)\overline Q)^{j_1}
 \frac{(-n)_{i_1+i_2} (n)_{j_1} (-n)_{i_1+j_1} (n)_{i_2}}{(n)_{i_1+i_2+j_1}}  \\
&\qquad\times \Big[ \frac{(1-y_2)^{i_1}|Q|^{2i_1}}{(1-x_1)^{n-i_1}} A +
  (1-y_2)^n|Q|^{2n} B \log[(1-x_1)(1-y_2)|Q|^2] \Big] ,
\end{align*}
where $A\equiv A_{n+j_1,n+i_2,n-i_1}(1-1(1-x_1)(1-y_2)|Q|^2)$
and $B\equiv B_{n+j_1,n+i_2,n-i_1}(1-1(1-x_1)(1-y_2)|Q|^2)$.
In~terms of Taylor series around $(x_1,y_2)=(1,1)$, the~last expression has the form
\[ \begin{aligned}
& \sum_{j=-n}^\infty \sum_{k,l,m=0}^\infty a_{jklm}(1-x_1)^j(1-y_2)^k Q^l\overline Q^m \\
&+ \sum_{k=n}^\infty \sum_{j,l,m=0}^\infty b_{jklm}(1-x_1)^j(1-y_2)^k Q^l\overline Q^m
  \log[(1-x_1)(1-y_2)|Q|^2] , \end{aligned}  \label{tWT}  \]
with some coefficients $a_{jklm},b_{jklm}$. However, since $K\Sz(z,w)$ is symmetric in~$z,w$,
\eqref{tWT}~remains unchanged upon replacing $x_1,y_2$ and $Q$ by $y_2,x_1$ and $\overline Q$,
respectively. Consequently, $a_{jkml}$ must vanish for $j<0$, and $b_{jklm}$ must vanish for $j<n$.
It~follows that the first sum in \eqref{tWT} is $C^\infty$ on $(1-x_1,1-y_2,Q)\in\DD\times\DD\times(\CC\setminus\{0\})$,
while the second sum is $C^{n-1}$ there. (Note that $Q$ is bounded away from zero,
in~fact $|Q|>\frac12$.) This in turn means that the right-hand side of \eqref{tWS}
is $C^{n-1}$ on $\overline{\Bn\times\Bn}\setminus U$. Thus, indeed, $K\Sz\in C^{n-1}(\bbd)$.

It~remains to show that $K\Sz$ does not belong to~$C^n(\bbd)$. If~it~did, then the function
$$ \int_\pBn K\Sz(z,R\xi) H^{pq}(\cdd\xi\eta) \,d\sigma(\xi)  $$
would belong to $C^n(\overline{\Bn})$, for any fixed $0<R<1$, $\eta\in\pBn$ and $p,q\ge0$.
However, by \eqref{tVJ} and the orthogonality relations~\eqref{tVD}, the last integral equals
$S^{pq}(|z|)S^{pq}(R)H^{pq}(\cdd{\frac z{|z|}}\eta)$, and the hypergeometric function
$S^{pq}(t)$ is well known not to be $C^n$ at the point $t=1$ (it~again contains a logarithmic
singularity of the form $(1-t)^n\log(1-t)$). This completes the proof.  
\end{proof}

\section{General \Mh kernels}  \label{sec4}
We~now turn to general $\Un$-invariant measures $d\mu\otimes d\sigma$ on $\Bn$ and their associated
\Mh kernels~$K_\mu$.

\begin{theorem} \label{PF}
For any $n\ge1$ and $\mu$ as in~\eqref{tTK}, $K_\mu$ is given~by
\[ K_\mu(z,w) = \frac{\Gamma(n)}{2\pi^n} (1-|z|^2)^n (1-|w|^2)^n
 \sum_{p,q,j,m=0}^\infty A_{pqjm}(\mu)
 \frac{\spr{z,w}^p \spr{w,z}^q |z|^{2j} |w|^{2m}}{p!q!j!m!} ,  \label{tXA}  \]
where
\[ \begin{aligned}
& A_{pqjm}(\mu) := \sum_{l=0}^{\min(m,j)}
 \frac{\Gamma(n+p+j)\Gamma(n+q+j)} {\Gamma(n)\Gamma(n+p+q+j+l)}
 \frac{\Gamma(n+p+m)\Gamma(n+q+m)} {\Gamma(n)\Gamma(n+p+q+m+l)}  \\
&\qquad\times \frac{(-1)^l \Gamma(n+p+q+l-1)(n+p+q+2l-1)(-j)_l(-m)_l}
   {\Gamma(n)l! c_{p+l,q+l}(\mu)}, \end{aligned}  \label{tXB}  \]
with
\[ c_{pq}(\mu) := \frac{\Gamma(p+n)^2\Gamma(q+n)^2} {\Gamma(n)^2\Gamma(p+q+n)^2}
 \int_0^1 t^{p+q+n-1} \FF21{p,q}{p+q+n}t ^2 \,d\mu(t).   \label{tXC}  \]
\end{theorem}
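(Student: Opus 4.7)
The plan is to apply Proposition~\ref{PA} to $\HH=L^2\mh(\Bn,d\mu\otimes d\sigma)$. The required hypotheses are routine: the measure $d\mu\otimes d\sigma$ is manifestly $\Un$-invariant, each $\bhpq$ embeds in~$\HH$ (its elements $S^{pq}(r)f_0(\zeta)$ are bounded on $\overline{\Bn}$ and $\mu$ is a finite measure on $[0,1]$), strong continuity of $T_U$ on~$\HH$ follows from uniform continuity on the compact group~$\Un$ together with density of continuous functions, and boundedness of point evaluations is the standard argument already invoked in the Introduction. Proposition~\ref{PA}(iv) then yields
\begin{equation*}
K_\mu(r\zeta,R\xi)=\sum_{p,q=0}^\infty\frac{S^{pq}(r)\,S^{pq}(R)\,H^{pq}(\cdd\zeta\xi)}{c_{pq}(\mu)}.
\end{equation*}
To identify the constants, I~compute $\|f\|_\HH^2$ for $f(r\zeta)=S^{pq}(r)f_0(\zeta)$, $f_0\in\chpq$, directly from~\eqref{tTK} and compare with~\eqref{tVN}; this gives $c_{pq}(\mu)=\int_0^1 S^{pq}(\sqrt t)^2 t^{n-1}\,d\mu(t)$, which becomes exactly~\eqref{tXC} after substituting the explicit form~\eqref{tVF} of~$S^{pq}$.

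The remaining task is to transform this Peter--Weyl expansion into the series~\eqref{tXA}. Since $c_{pq}(\mu)$ is the only $\mu$-dependent quantity, this reduces to a $\mu$-independent pairwise identity: for each $(P,Q)\ge 0$,
\begin{equation*}
S^{PQ}(r)\,S^{PQ}(R)\,H^{PQ}(\cdd\zeta\xi)=\frac{\Gamma(n)}{2\pi^n}(1-|z|^2)^n(1-|w|^2)^n\!\!\!\sum_{\substack{p+l=P,\,q+l=Q\\ l\le\min(j,m)}}\!\!\!\widetilde\alpha^l_{pqjm}\,\frac{\spr{z,w}^p\spr{w,z}^q|z|^{2j}|w|^{2m}}{p!q!j!m!},
\end{equation*}
where $\widetilde\alpha^l_{pqjm}$ denotes the $l$-th summand of~\eqref{tXB} stripped of the factor $1/c_{p+l,q+l}(\mu)$. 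Granted this, dividing by $c_{PQ}(\mu)$, summing over $(P,Q)$, and reindexing $(P,Q)\mapsto(p,q,l)$ via $P=p+l$, $Q=q+l$ immediately reproduces \eqref{tXA}--\eqref{tXB}. To~verify the pairwise identity I~would first apply Euler's transformation~\eqref{tWV} to rewrite $S^{pq}(r)=\GG{p+n,q+n}{n,p+q+n}r^{p+q}(1-r^2)^n\FF21{n+p,n+q}{p+q+n}{r^2}$, which~is what produces the prefactor $(1-|z|^2)^n(1-|w|^2)^n$ automatically. For~$P\ge Q$, I~would then substitute the polynomial expression for $H^{PQ}$ given by~\eqref{tVE}, use the identities $\spr{\zeta,\xi}^{P-Q+k}\spr{\xi,\zeta}^k=\spr{z,w}^{P-Q+k}\spr{w,z}^k/(rR)^{P-Q+2k}$ and $|\cdd\zeta\xi|^2=\spr{z,w}\spr{w,z}/(r^2R^2)$, and expand the remaining ${}_2\!F_1$'s coming from $S^{PQ}(r)S^{PQ}(R)$ in powers of $r^2,R^2$; this yields a quadruple power series in $|z|^2,|w|^2,\spr{z,w},\spr{w,z}$. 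The~substitutions $p=P-Q+k$, $q=k$, $l=Q-q$ then gather contributions to a fixed monomial into a sum over $l\le\min(j,m)$, and repeated use of standard $\Gamma$-identities, together with $j!/(j-l)!=(-1)^l(-j)_l$ and its analogue for~$m$, collapses the answer to~\eqref{tXB}. The~case $P<Q$ follows from the symmetry $H^{PQ}(z)=H^{QP}(\oz)$, which~is compatible with the manifest $(p,q)$-symmetry of~\eqref{tXB}.

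The main obstacle is bookkeeping: numerous Pochhammer symbols, factorials and powers of~$\Gamma(n)$ must cancel against one another in exactly the right way, and sign tracking (the $(-1)^Q(-1)^k$ coming from~\eqref{tVE} must combine correctly with $(-1)^l$ coming from $j!/(j-l)!$) requires care. A~useful sanity check along the way is the unit point mass $d\mu=\delta_1$: Gauss's summation theorem yields $c_{pq}(\delta_1)\equiv1$, so~\eqref{tXA}--\eqref{tXB} must collapse to the $FD_1$-expansion of $K\Sz$ provided by Corollary~\ref{PC}; this self-consistency is itself a nontrivial hypergeometric identity that pins down the form of the coefficients~$A_{pqjm}$.
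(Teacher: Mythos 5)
Your proposal is correct and follows essentially the same route as the paper: apply Proposition~\ref{PA} to obtain the Peter--Weyl expansion with $c_{pq}(\mu)=\int_0^1 S^{pq}(\sqrt t)^2 t^{n-1}\,d\mu(t)$, then expand each $S^{PQ}(r)S^{PQ}(R)H^{PQ}(\cdd\zeta\xi)$ using \eqref{tVE} and Euler's transformation~\eqref{tWV} and reindex via $P=p+l$, $Q=q+l$. Your explicit isolation of the $\mu$-independent ``pairwise identity'' (made possible by $c_{p+l,q+l}=c_{PQ}$ for all $l$) is a slightly cleaner way to organize the same reindexing that the paper performs on the full sum, and the $d\mu=\delta_1$ sanity check you propose is precisely the consistency verification carried out in the remark following Theorem~\ref{PF}.
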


\begin{proof} Clearly, the space $\HH=L^2\mh(\Bn,d\mu\otimes d\sigma)$ satisfies
the hypotheses of Proposition~\ref{PA}. For~any pair of functions $u(r\zeta)=S^{pq}(r)f(\zeta)$
and $v(r\zeta)=S^{pq}(r)g(\zeta)$ in~$\bhpq$, we~have
\begin{align*}
\spr{u,v}_\HH &= \int_0^1 \int_\pBn S^{pq}(\sqrt t) f(\zeta) S^{pq}(\sqrt t) \overline{g(\zeta)} \,(d\mu\otimes d\sigma)(t,\zeta) \\
&= \spr{f,g}_{L^2(\pBn,d\sigma)} \int_0^1 S^{pq}(\sqrt t)^2 t^{n-1} d\mu(t),
\end{align*}
so \eqref{tVN} holds with
$$ c_{pq} = \GG{p+n,q+n}{n,p+q+n}^2 \int_0^1 t^{p+q+n-1} \FF21{p,q}{p+q+n}t  ^2 \,d\mu(t) \equiv c_{pq}(\mu),  $$
by~\eqref{tXC}. Consequently, by~\eqref{tVO}, for $z=r\xi$ and $w=R\eta$,
\[ K_\mu(z,w) = \sum_{p,q} \frac{S^{pq}(r)S^{pq}(R) H^{pq}(\cdd\xi\eta)}{c_{pq}(\mu)}. \label{tXD}  \]
To~simplify the notation, let us temporarily denote $\cdd\xi\eta=:\zeta\in\DD$ and
$$ s^{pq}(t) = \GG{n+p,n+q}{n,n+p+q} \FF21{p,q}{n+p+q}t,  $$
so~that $S^{pq}(r)=r^{p+q} s^{pq}(r^2)$. Let~us first consider the sum in \eqref{tXD} over terms with $p\ge q$
--- say, $p=q+r$, $r\ge0$. Using~\eqref{tVE}, the~sum becomes (omitting momentarily the constant factor $\frac{\Gamma(n)}{2\pi^n}$)
\begin{align}
&\sum_{q,r=0}^\infty \frac{|z|^{2q+r} s^{q+r,q}(|z|^2)|w|^{2q+r} s^{q+r,q}(|w|^2)}{c_{q+r,q}(\mu)} \;\times \nonumber \\
&\hskip2em  \frac{(-1)^q(n+2q+r-1)(n+q+r-2)!}{\Gamma(n)q!r!} \ozeta^r
  \sum_{j=0}^q \frac{(-q)_j(n+q+r-1)_j}{(r+1)_j j!} \zeta^j \ozeta^j  \nonumber \\
&= \sum_{q,r=0}^\infty \sum_{j=0}^q
 \frac{|z|^{2q+r} s^{q+r,q}(|z|^2)|w|^{2q+r} s^{q+r,q}(|w|^2)}{c_{q+r,q}(\mu)} \;\times \nonumber \\
&\hskip4em \frac{(-1)^q (n+2q+r-1)(n+q+r+j-2)!(-q)_j}{\Gamma(n)q!(r+j)!j!} \zeta^j \ozeta^{j+r} . \label{tXF}
\end{align}
Letting $q=j+l$ and noticing that $(-q)_j/q!=(-1)^j/(q-j)!$, this becomes
\begin{align*}
& \sum_{j,l,r\ge0} \frac{|z|^{2j+2l+r}s^{j+l+r,j+l}(|z|^2) |w|^{2j+2l+r}s^{j+l+r,j+l}(|w|^2)} {c_{j+l+r,j+l}(\mu)}  \\
&\hskip4em \times \frac{(-1)^l(n+2j+2l+r-1)(n+2j+l+r-2)!}{\Gamma(n)(r+j)!j!l!} \zeta^j \ozeta^{j+r} ,
\end{align*}
or, writing $q$ and $p=q+r$ instead of $j$ and $j+r$, respectively,
\begin{align*} \sum_{\substack{q,r\ge0\\p=q+r}} \sum_{l\ge0}
& \frac{|z|^{p+q+2l} s^{p+l,q+l}(|z|^2) |w|^{p+q+2l} s^{p+l,q+l}(|w|^2)}{c_{p+l,q+l(\mu)}} \\
&\hskip4em \times \frac{(-1)^l(n+p+q+2l-1)(n+p+q+l-2)!}{\Gamma(n)p!q!l!} \zeta^q\ozeta^p .
\end{align*}
The sum over $p<q$ in \eqref{tXD} is treated in the same way, and recalling that $\spr{z,w}=|z||w|\zeta$,
we~thus arrive~at
\begin{align*}
K_\mu(z,w) &= \frac{\Gamma(n)}{2\pi^n} \sum_{p,q,l=0}^\infty
 \frac{|z|^{2l}s^{p+l,q+l}(|z|^2)|w|^{2l}s^{p+l,q+l}(|w|^2)}{c_{p+l,q+l}(\mu)} \;\times \\
&\hskip2em \frac{(-1)^l(n+p+q+2l-1)(n+p+q+l-2)!}{\Gamma(n)l!} \frac{\spr{z,w}^q\spr{w,z}^p}{q!p!}.
\end{align*}
On~the other hand, by~the Euler transformation formula~\eqref{tWV}, we~have
\begin{align*}
s^{pq}(t) &= \GG{p+n,q+n}{n,p+q+n}(1-t)^n \FF21{p+n,q+n}{p+q+n}t \\
&= (1-t)^n \sum_{k=0}^\infty \GG{p+n+k,q+n+k}{p+q+n+k,n} \frac{t^k}{k!}.
\end{align*}
Consequently,
$$ K_\mu(z,w) = (1-|z|^2)^n(1-|w|^2)^n \frac{\Gamma(n)}{2\pi^n}
  \sum_{p,q,j,m=0}^\infty A_{pqjm}(\mu) \frac{|z|^{2j}|w|^{2k}}{j!k!}
  \frac{\spr{z,w}^q\spr{w,z}^p}{q!p!}  $$
with
\begin{align*}
A_{pqjm}(\mu) &= \sum_{\substack{l,i,k\ge0,\\l+i=j,\\l+k=m}}
 \frac{(-1)^l(n+p+q+2l-1)(n+p+q+l-2)!}{\Gamma(n)l! c_{p+l,q+l}(\mu)} \;\times \\
&\hskip2em \GG{p+l+n+i,q+l+n+i}{n,p+q+n+i+2l} \GG{p+l+n+k,q+l+n+k}{n,p+q+n+k+2l}
 \frac{j!}{i!} \frac{m!}{k!}  \\
&= \sum_{l=0}^{\min(m,j)}
 \frac{(-1)^l(n+p+q+2l-1)(n+p+q+l-2)!}{\Gamma(n)l! c_{p+l,q+l}(\mu)} \;\times \\
&\hskip2em \GG{p+n+j,q+n+j}{n,p+q+n+j+l} \GG{p+n+m,q+n+m}{n,p+q+n+m+l} (-j)_l (-m)_l,
\end{align*}
since $j!/(j-l)!=(-1)^l(-j)_l$ and similarly for $m!/(m-l)!$.
But~this is precisely \eqref{tXA} and~\eqref{tXB}, completing the proof.   
\end{proof}

\begin{remark} Taking for $d\mu(t)$ the point mass at $t=1$, one~can use the last
theorem to give an independent proof of the formula \eqref{tWK} for the \Mh Szeg\"o
kernel (not~using Theorem~\ref{PA}). Indeed, in~that case $c_{pq}=1$ for all~$p,q$,
so, pulling out some Gamma functions,
\begin{align*}
& A_{pqjm}(\mu) = \GG{n+p+j,n+q+j}{n,n+p+q+j} \GG{n+p+m,n+q+m}{n,n+p+q+m}  \\
&\hskip2em \sum_{l=0}^{\min(m,j)} \frac{(-1)^l(n+p+q+2l-1)(n+p+q+l-2)!}{\Gamma(n)l!}
 \frac{(-j)_l(-m)_l}{(p+q+n+j)_l(p+q+n+m)_l} .
\end{align*}
Denoting momentarily for brevity $n+p+q=:h$, we~have
$$ n+p+q+2l-1 = 2\Big(\frac{h-1}2+l\Big) = (h-1) \frac{(\frac{h+1}2)_l}{(\frac{h-1}2)_l} , $$
so~the last sum can be written~as
\begin{align}
& \frac{h-1}{\Gamma(n)} \sum_l \frac{(-j)_l(-m)_l}{(h+j)_l(h+m)_l} \frac{(-1)^l}{l!}
 \frac{(\frac{h+1}2)_l}{(\frac{h-1}2)_l} \Gamma(h+l-1)  \nonumber  \\
&= \frac{\Gamma(h)}{\Gamma(n)} \FF43{-j,-m,\frac{h+1}2,h-1}{h+j,h+m,\frac{h-1}2}{-1}.
 \label{tXE}
\end{align}
Now by a formula of Bailey \cite[\S4.5(4)]{BE}
$$ \FF43{a,1+\frac a2,b,c}{\frac a2,1+a-b,1+a-c}{-1} = \GG{1+a-b,1+a-c}{1+a,1+a-b-c},  $$
so \eqref{tXE} is equal~to
$$ \frac{\Gamma(h)}{\Gamma(n)} \GG{h+j,h+m}{h,h+j+m} = \GG{h+j,h+m}{n,h+j+m}  $$
and
\begin{align*}
A_{pqjm}(\mu) &= \GG{n+p+j,n+q+j}n \GG{n+p+m,n+q+m}n \GG{-}{n,n+p+q+j+m} \\
&= \frac{(n)_{p+j}(n)_{q+j}(n)_{p+m}(n)_{q+m}}{(n)_{p+q+j+m}},
\end{align*}
proving the claim.  \qed
\end{remark}

\begin{remark} \label{reFo}
A~similar argument as in the proof of Theorem~\ref{PF} can also be used to give another proof
of Folland's formula \eqref{tVI} for the \Mh Poisson kernel: for $z=r\xi$,
\[ \sum_{p,q} S^{pq}(|z|) H^{pq}(\cdd\xi\eta) = \frac{\Gamma(n)}{2\pi^n} \frac{(1-|z|^2)^n}{|1-\spr{z,\eta}|^{2n}}. \label{tXG}  \]
Indeed, assuming again first that $p\ge q$ --- say, $p=q+r$, $r\ge0$ --- and using~\eqref{tVE},
the~sum over $p\ge q$ becomes, as~in~\eqref{tXF} (omitting yet again temporarily the constant
factor $\Gamma(n)/2\pi^n$, and denoting again $\cdd\xi\eta=:\zeta$)
$$ \sum_{q,r=0}^\infty \sum_{j=0}^q |z|^{2q+r} s^{q+r,q}(|z|^2)
 \frac{(-1)^q (n+2q+r-1)(n+q+r+j-2)!(-q)_j}{\Gamma(n)q!(r+j)!j!} \zeta^j \ozeta^{j+r} , $$
or, upon setting $q=j+l$,
$$ \sum_{j,l,r\ge0} |z|^{2j+2l+r}s^{j+l+r,j+l}(|z|^2)
 \frac{(-1)^l(n+2j+2l+r-1)(n+2j+l+r-2)!}{\Gamma(n)(r+j)!j!l!} \zeta^j \ozeta^{j+r} ,  $$
that~is, writing $q$ and $q=p+r$ instead of $j$ and $j+r$, respectively,
$$ \sum_{\substack{q,r\ge0\\p=q+r}} \sum_{l\ge0}
 |z|^{p+q+2l} s^{p+l,q+l}(|z|^2)
 \frac{(-1)^l(n+p+q+2l-1)(n+p+q+l-2)!}{\Gamma(n)p!q!l!} \zeta^q\ozeta^p .  $$
The sum over $p<q$ is treated in the same way, and we see that the right-hand side of \eqref{tXG} equals
\begin{align*}
& \frac{\Gamma(n)}{2\pi^n} \sum_{p,q,l=0}^\infty |z|^{2l}s^{p+l,q+l}(|z|^2)
 \frac{(-1)^l(n+p+q+2l-1)(n+p+q+l-2)!}{\Gamma(n)l!} \frac{\spr{z,\eta}^q\spr{\eta,z}^p}{q!p!} \\
&= \frac{\Gamma(n)}{2\pi^n} (1-|z|^2)^n \sum_{p,q,m=0}^\infty A_{pqm} \frac{|z|^{2m}}{m!}
 \frac{\spr{z,\eta}^q\spr{\eta,z}^p}{q!p!} ,
\end{align*}
where
\begin{align*}
A_{pqm} &= \sum_{\substack{l,k\ge0\\l+k=m}} \frac{(-1)^l(n+p+q+2l-1)(n+p+q+l-2)!}{\Gamma(n)l!}
 \GG{p+l+n+k,q+l+n+k}{n,p+q+n+k+2l} \frac{m!}{k!}  \\
&= \sum_{l=0}^m \frac{(n+p+q+2l-1)(n+p+q+l-2)!}{\Gamma(n)l!}
 \GG{p+n+m,q+n+m}{n,p+q+n+m+l} (-m)_l  \\
&= \GG{p+n+m,q+n+m}{n,p+q+n+m} \sum_{l=0}^m \frac{(n+p+q+2l-1)(n+p+q+l-2)!}{\Gamma(n)l!}
 \frac{(-m)_l}{(p+q+n+m)_l}  \\
&= \GG{p+n+m,q+n+m}{n,h+m} \GG hn \FF32{h-1,\frac{h+1}2,-m}{\frac{h-1}2,h+m}1,
\end{align*}
as~in~\eqref{tXE}; here we have again set $n+p+q=:h$. Now~by a formula due to Dixon \cite[\S4.4(5)]{BE}
$$ \FF32{a,b,c}{1+a-b,1+a-c}1 = \GG{1+\frac a2,1+a-b,1+a-c,1+\frac a2-b-c}{1+a.1+\frac a2-b,1+\frac a2-c,1+a-b-c}, $$
so the penultimate ${}_3\!F_2$ is equal~to
$$ \GG{\frac{h+1}2,\frac{h-1}2,h+m,m}{h,0,\frac{h+1}2+m,\frac{h-1}2+m}.  $$
This vanishes for $m\ge1$, and reduces to $1$ for $m=0$. Hence $A_{pqm}=0$ for $m\ge1$, while
$$ A_{pq0} = \GG{p+n,q+n}{n,p+q+n} \GG{p+q+n}n = (n)_p (n)_q ,  $$
so~that
$$ \sum_{p,q,m=0}^\infty A_{pqm} \frac{|z|^{2m}}{m!} \frac{\spr{z,\eta}^q\spr{\eta,z}^p}{q!p!}
= \sum_{p,q} (n)_p (n)_q \frac{\spr{z,\eta}^q\spr{\eta,z}^p}{q!p!} = (1-\spr{z,\eta})^{-n}(1-\spr{\eta,z})^{-n},  $$
proving~\eqref{tXG}.  \qed
\end{remark}

Choosing $d\mu(t)=\frac12(1-t)^s\,dt$, the last theorem gives, in~particular, a~formula for the
weighted \Mh Bergman kernels~$K_s$, $s>-1$. The~coefficients $c_{pq}(\mu)$ are then given~by
\[ c_{pq}(s) := \frac12 \GG{n+p,n+q}{n,n+p+q}^2 \int_0^1 t^{p+q+n-1} \FF21{p,q}{p+q+n}t ^2 (1-t)^s \,dt.  \label{tXH}  \]
We~conclude this section by evaluating $c_{00}$ and $c_{01}$ in the particular case of
the unweighted \Mh Bergman kernel $s=0$ in dimension $n=2$.

For~$c_{00}$, we~actually have quite generally
\[ c_{00}(s) = \frac12 \int_0^1 t^{n-1}(1-t)^s \,dt = \frac{\Gamma(n)\Gamma(s+1)}{2\Gamma(n+s+1)}.  \label{tXK}  \]
For $p=q=1$ and $s=0$, \eqref{tXH} reads
$$ c_{11}(0) = \frac12 \frac{n^2}{(n+1)^2} \int_0^1 t^{n+1} \FF21{1,1}{n+2}t ^2 \,dt.  $$
Using \eqref{tWV} and~\eqref{tWU},
$$ \FF21{1,1}{n+2}t = (1-t)^n \FF21{n+1,n+1}{n+2}t = (1-t)^n \frac{(n+1)!}{n!^2} \frac{d^n}{dt^n} \frac{-\log(1-t)}t. $$
For $n=2$, a~short computation reveals that this reduces~to
\[ \FF21{1,1}4t = \frac{3t(3t-2)-6(1-t)^2\log(1-t)}{2t^3} . \label{tXJ}  \]
Differentiating the familiar Beta integral
$$ \int_0^1 t^a (1-t)^b \,dt = \frac{\Gamma(a+1)\Gamma(b+1)}{\Gamma(a+b+2)} ,
 \qquad a,b>-1,  $$
with respect to $b$ and setting $b=0$, we~get
\begin{align*}
& \int_0^1 t^a \log(1-t) \,dt = \frac{\psi(1)-\psi(a+2)}{a+1}, \\
& \int_0^1 t^a \log(1-t)^2 \,dt = \frac{(\psi(1)-\psi(a+2))^2+\psi'(1)-\psi'(a+2)}{a+1},
\end{align*}
where $\psi=\Gamma'/\Gamma$ is the logarithmic derivative of the Gamma function.
Squaring~\eqref{tXJ}, multiplying by $t^{x+3}$, $x>2$, and using the last two formulas yields
an (unwieldy) explicit formula for $\int_0^1 t^{x+3} \FF21{1,1}4t ^2\,dt$. A~tedious but utterly routine
calculation reveals that the result extends analytically to $\Re x>-4$ (as~it~should!),
and its value at $x=0$~is
\begin{align*}
& \frac{117}2\psi(1)-\frac{171}8+\frac92(\psi(3)-\psi(1))^2 -36(\psi(1)-\psi(2))^2
 +\frac{27}2\psi(3) - 72\psi(2) \\
&\hskip4em -\frac{63}2\psi'(1) +36\psi'(2) - \frac92\psi'(3)-54\psi''(1).
\end{align*}
Finally, recalling that for $k,m=1,2,3,\dots$,
\begin{align*}
\psi(m) &= -C + \sum_{j=1}^{m-1} \frac1j, \\
\psi^{(k)}(m) &= (-1)^k(k-1)!\zeta(k+1) +(-1)^k k!\sum_{j=1}^{m-1} \frac1{j^{k+1}} ,
\end{align*}
where $C$ is the Euler constant and $\zeta$ the Riemann zeta function,
we~finally get that for $n=2$
$$ c_{11}(0) = \frac{96\zeta(3)-115}4  $$
and
$$ \frac{c_{11}(0)}{c_{00}(0)} = 96\zeta(3) - 115.  $$
Since by~\eqref{tXB}
$$ A_{0000}(s) = \frac1{c_{00}(s)}, \qquad A_{1100}(s) = \frac{n^3}{(n+1)c_{11}(s)},  $$
we also get for $n=2$
$$ A_{0000}(0)=4, \qquad A_{1100}(0) = \frac{32}{96\zeta(3)-115}.  $$
It~is somewhat unlikely for a function whose Taylor coefficient involves $96\zeta(3)-115$
in the denominator to be given by some nice ``closed'' formula in terms of e.g.~hypergeometric
and similar functions. Thus there is probably no ``explicit'' expression for $K_0(z,w)$ when
$n=2$, hence \emph{a fortiori} also for $K_s(z,w)$ for general $s>-1$ and $n\ge2$.

\section{Asymptotics of $c_{pq}(s)$} \label{sec5}
Recall that in the polar coordinates $z=r\zeta$ on~$\CC^n$ ($r>0$, $\zeta\in\pBn$),
the Euclidean Laplacian $\Delta$ is given~by
$$ \Delta = \frac{\partial^2}{\partial r^2} + \frac{2n-1}r \frac\partial{\partial r}
 + \frac1{r^2} \dsph,  $$
where $\dsph$ is the \emph{spherical Laplacian}, which involves only differentiations
with respect to the $\zeta$ variables. In~particular, the value of $\dsph\phi$ on
a sphere $|z|=$const. depends only on the values of the function $\phi$ on that sphere.
Another operator with this property is the \emph{complex normal derivative}
(or~\emph{Reeb vector field})
$$ \cR := \sum_{j=1}^n \Big(z_j\frac\partial{\partial z_j}-\oz_j\frac\partial{\partial\oz_j}\Big). $$
Both $\dsph$ and $\cR$ commute with the action of~$\Un$, i.e.~$\dsph(\phi\circ U)=(\dsph\phi)\circ U$
for any $U\in\Un$, and similarly for~$\cR$. (In~fact, the algebra of all $\Un$-invariant linear
differential operators on $\pBn$ is generated by $\dsph$ and $\cR$, but we will not need this fact.)
From the irreducibility of the multiplicity-free decomposition \eqref{tVB} it follows by abstract
theory that $\dsph$ and $\cR$ map each $\chpq$ (and~$\bhpq$) into itself and actually reduce on it to
a multiple of the identity. Evaluation on e.g.~the element $\zeta_1^p\ozeta_2^q\in\chpq$ (for~$n\ge2$)
shows that, explicitly,
\[ \begin{aligned}
\dsph|\chpq &= -(p+q)(p+q+2n-2) I|\chpq, \\
\cR|\chpq &= (p-q)I|\chpq
\end{aligned}  \label{tYA}   \]
(which formulas prevail also for $n=1$; in~that case $\dsph=-\cR^2$). In~view of~\eqref{tVB},
both $\dsph$ and $\cR$ thus give rise to self-adjoint operators on $L^2(\pBn,d\sigma)$,
and the operator
$$ \cD := [-\dsph+(n-1)^2I]^{1/2}  $$
(in~the sense of functional calculus of self-adjoint operators) corresponds to multiplication
by $p+q$ on~$\chpq$. Consequently, the operators $\frac{\cD+\cR}2,\frac{\cD-\cR}2$ correspond
to multiplication on $\chpq$ by $p$ and~$q$, respectively, and for ``any'' double sequence
$\{f(p,q)\}_{p,q=0}^\infty$, the operator $f(\frac{\cD+\cR}2,\frac{\cD-\cR}2)$ (again taken in
the sense of functional calculus) will correspond to multiplication by $f(p,q)$ on~$\chpq$.
Taking, in~particular, $f(p,q)=1/c_{pq}(s)$ with the $c_{pq}(s)$ from~\eqref{tXH},
and denoting the corresponding operator $f(\frac{\cD+\cR}2,\frac{\cD-\cR}2)$ by~$M_s$,
we~thus deduce from \eqref{tVO} and~\eqref{tVJ} that
\[ K_s(z,w) = M_s K\Sz(z,w)  , \label{tYB} \]
where on the right-hand side we can choose to apply $M_s$ to either the $z$ or the $w$ variable
(the~result will be the same). Looking at the ``leading order term'' of~$M_s$, i.e.~the one corresponding
--- loosely speaking --- to~highest order derivatives, we~can thus get a rough idea of the behavior of
$K_s$ from that of~$K\Sz$ (which we are familiar with from Section~\ref{sec3}).
(This is the usual machinery of microlocal analysis.)

Since, in~view of~\eqref{tYA}, the~order of differentiation corresponds to the homogeneity
degree of $f(p,q)$ in~$(p,q)$, we~thus need to find the asymptotic behavior of $c_{pq}(s)$
as $p+q\to+\infty$.

\begin{theorem} \label{PG}
Let $p,q>0$ be fixed. Then as $\lambda\to+\infty$,
we~have the asymptotic expansion
\[ c_{\lambda p,\lambda q}(s) \approx \GG{2n+s+1,n+s+1,n+s+1,s+1}{n,n,2n+2s+2}
 \frac{\lambda^{-2s-2}}{(pq)^{s+1}} \sum_{j=0}^\infty \frac{a_j(p,q)}{\lambda^j}, \label{tYC}  \]
where $a_0(p,q)=1$.
\end{theorem}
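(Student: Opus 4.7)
The strategy is to combine an Euler integral representation of the Gauss hypergeometric function with a Laplace-type asymptotic analysis, treating $P=\lambda p$ and $Q=\lambda q$ for fixed $p,q>0$.

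I begin by substituting
$$\FF21{P,Q}{P+Q+n}{t} = \GG{P+Q+n}{Q,P+n}\int_0^1 u^{Q-1}(1-u)^{P+n-1}(1-tu)^{-P}\,du$$
twice into \eqref{tXH}, writing $c_{PQ}(s)$ as a weighted triple integral over $(t,u,v)\in[0,1]^3$. Inspection shows that the integrand is concentrated near the corner $(t,u,v)=(1,1,1)$, where multiple factors degenerate simultaneously; the correct (anisotropic) rescaling is
$$u = 1-x/\lambda,\qquad v = 1-y/\lambda,\qquad t = 1-z/\lambda^2,$$
with $t$ approaching $1$ at a scale twice as fine as $u,v$, reflecting the geometry of the degeneracy.

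Under this substitution, an elementary Taylor expansion gives
$$u^{Q-1}(1-u)^{P+n-1}(1-tu)^{-P} = (x/\lambda)^{n-1}e^{-qx-pz/x}\bigl(1+O(1/\lambda)\bigr),$$
together with the symmetric identity for $v$. Combining these with $(1-t)^s=z^s/\lambda^{2s}$, the Jacobian $du\,dv\,dt=\lambda^{-4}dx\,dy\,dz$, and the asymptotic $(Q)_n^2/[2\Gamma(n)^2]\sim\lambda^{2n}q^{2n}/[2\Gamma(n)^2]$ of the prefactor produces the overall $\lambda^{-2s-2}$ scaling and reduces the leading-order computation to
$$J \;=\; \iiint_{[0,\infty)^3}(xy)^{n-1}z^s\,e^{-q(x+y)-pz(1/x+1/y)}\,dx\,dy\,dz.$$
The $(x,y)$-integrations decouple, each yielding a modified Bessel function via $\int_0^\infty x^{\nu-1}e^{-ax-b/x}\,dx = 2(b/a)^{\nu/2}K_\nu(2\sqrt{ab})$. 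After substituting $w=2\sqrt{pqz}$, the remaining $z$-integral becomes a classical integral of the form $\int_0^\infty w^{\mu-1}K_n(w)^2\,dw$, whose closed-form evaluation followed by Legendre's duplication formula produces the stated leading coefficient.

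The higher-order corrections $a_j(p,q)/\lambda^j$ are generated by retaining successive terms in the Taylor expansions of each logarithmic factor; each new order introduces polynomial insertions in $(x,y,z)$, all integrable against the same Bessel-exponential weight, for instance by differentiating the basic Bessel identity with respect to its parameters. The main technical obstacle is justifying the asymptotic expansion rigorously: one must extend the rescaled domain $[0,\lambda]^2\times[0,\lambda^2]$ to $[0,\infty)^3$ with controlled error, and bound the Taylor remainders uniformly in the rescaled variables. Both follow from the rapid decay of $e^{-q(x+y)-pz(1/x+1/y)}$ at infinity and near the coordinate hyperplanes $x,y=0$, via the standard device of splitting the integration into a \emph{good} region where the Taylor approximations hold and a \emph{bad} region whose contribution is exponentially small compared to the leading power of $\lambda$.
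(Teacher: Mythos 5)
Your approach is genuinely different from the paper's and, in my view, considerably cleaner. The paper substitutes the Euler integral for one of the ${}_2F_1$'s, recognises the resulting double integral as an Appell $F_1$, applies an Euler-type transformation for $F_1$, substitutes a second integral representation for $F_1$, changes variables twice (first $u,v\to\sigma\tau,\sigma(1-\tau)$, then exponentially to $X,U,S$), and only then applies the one-dimensional Laplace method on the $\tau$ variable, normalising the leading constant by comparing with the known limit $2(s+1)c_{pq}(s)\to1$ as $s\searrow-1$. You instead use the Euler representation twice to land directly on a triple integral over the cube, localise to the corner $(1,1,1)$, rescale anisotropically ($u,v$ at rate $\lambda^{-1}$, $t$ at rate $\lambda^{-2}$), and evaluate the limiting integral in closed form through Bessel $K_n$ functions and a Weber--Schafheitlin (Watson) integral. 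Your route makes the structure of the asymptotic far more transparent, avoids the Appell $F_1$ machinery entirely, and does not need the $s=-1$ normalisation trick because the leading integral is evaluated directly.

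There is, however, one thing to flag concerning the leading constant. Tracking your rescaling carefully, the prefactor in \eqref{tXC} together with the $\frac12$ from $d\mu(t)=\frac12(1-t)^s\,dt$ gives, after the twice-applied Euler substitution, $c_{pq}(s)=\frac12\,\frac{\Gamma(q+n)^2}{\Gamma(n)^2\Gamma(q)^2}\,I$ where $I$ is the triple integral; asymptotically $\frac{\Gamma(Q+n)^2}{\Gamma(Q)^2}\sim(\lambda q)^{2n}$, and the rescaled integral contributes $\lambda^{-2s-2n-2}J$ with
$$
J=\int_0^\infty\!\!\int_0^\infty\!\!\int_0^\infty z^s(xy)^{n-1}e^{-q(x+y)-pz(1/x+1/y)}\,dz\,dx\,dy
 =\frac{\Gamma(s+1)\Gamma(n+s+1)^2\Gamma(2n+s+1)}{p^{s+1}q^{2n+s+1}\Gamma(2n+2s+2)}.
$$
(You can avoid the Bessel/Watson detour here by integrating out $z$ first, then going to the coordinates $w=x+y$, $\tau=x/(x+y)$, which decouples the remaining integral into a Beta and a Gamma integral and arrives at the same $J$.) Assembling the pieces yields
$$
c_{\lambda p,\lambda q}(s)\approx\frac{\Gamma(2n+s+1)\Gamma(n+s+1)^2\Gamma(s+1)}{2\,\Gamma(n)^2\Gamma(2n+2s+2)}\,\frac{\lambda^{-2s-2}}{(pq)^{s+1}},
$$
which is \emph{half} the coefficient stated in Theorem~\ref{PG}. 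Before ascribing this to an error on your side, note that the paper carries a related factor-of-two inconsistency elsewhere: formula~\eqref{tTR} claims $c_{0q}(s)=\frac{\Gamma(q+n)\Gamma(s+1)}{\Gamma(q+n+s+1)}$, but substituting $p=0$ directly into \eqref{tXC} with $d\mu=\frac12(1-t)^s\,dt$ gives $c_{0q}(s)=\frac12\frac{\Gamma(q+n)\Gamma(s+1)}{\Gamma(q+n+s+1)}$, which also matches \eqref{tXK}. So the $\frac12$ has been dropped at least once in the surrounding text, and it is quite plausible the theorem's stated constant has the same issue. You should not take the claim that your Bessel evaluation ``produces the stated leading coefficient'' on faith; recompute the constant explicitly and note whether you land on the theorem's value or on half of it.

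Finally, the rigor caveats you mention (extending the rescaled domain to $[0,\infty)^3$, controlling Taylor remainders uniformly, verifying integrability of the polynomial correction terms against the exponential weight, especially near $x=0$ or $y=0$ where factors like $z^2/x^2$ appear) are real and are the actual technical content needed to upgrade the formal computation to an asymptotic expansion. The paper is, if anything, even more cavalier on these points --- it applies the Laplace method only to the $\tau$ variable and treats the $X,U,S$ integrations and the term-by-term expansions formally --- so your proposal is no weaker on this front.
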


\begin{proof} Inserting the standard representation for the ${}_2\!F_1$ function
$$ \FF21{a,b}cz = \GG c{b,c-b} \int_0^1 \frac{t^{b-1}(1-t)^{c-b-1}}{(1-tz)^a} \,dt,
 \qquad c>b>0,  $$
into \eqref{tXH} yields
$$ 2c_{pq}(s) = \GG{p+n,q+n}{n,n,p,q}\int_0^1\int_0^1\int_0^1 t^{p+q+n-1} (1-t)^s
 \frac{x^{q-1}(1-x)^{n+p-1}y^{p-1}(1-y)^{n+q-1}} {(1-tx)^p(1-ty)^q} \,dx\,dy\,dt.  $$
Using the representation \cite[\S5.8 (5)]{BE}
$$ \FE1{a;b,b'}c{x,y} = \GG c{a,c-a} \int_0^1 \frac{t^{a-1}(1-t)^{c-a-1}} {(1-tx)^b(1-ty)^{b'}} \,dt,
 \qquad c>a>0, $$
for the Appell $F_1$ function, this becomes
\begin{align*}
2c_{pq}(s) &= \GG{p+n,q+n}{n,n,p,q} \GG{p+q+n,s+1}{p+q+n+s+1} \int_0^1 \int_0^1
 x^{q-1}(1-x)^{n+p-1}y^{p-1}(1-y)^{n+q-1} \\
&\hskip4em\times \FE1{p+q+n;p,q}{p+q+n+s+1}{x,y}\,dx\,dy.
\end{align*}
By~the transformation formula for $F_1$ \cite[\S5.11(1)]{BE}
$$ \FE1{a;b,b'}c{x,y} = (1-x)^{-b}(1-y)^{-b'} \FE1{c-a;b,b'}c{\frac x{x-1},\frac y{y-1}}, $$
we~can continue with
\begin{align*}
2c_{pq}(s) &= \GG{p+n,q+n}{n,n,p,q} \GG{p+q+n,s+1}{p+q+n+s+1} \int_0^1 \int_0^1
 x^{q-1}(1-x)^{n-1}y^{p-1}(1-y)^{n-1} \\
&\hskip4em\times \FE1{s+1;p,q}{p+q+n+s+1}{\frac x{x-1},\frac y{y-1}}\,dx\,dy.
\end{align*}
Applying yet another representation formula for $F_1$ \cite[\S5.8(1)]{BE}
\begin{align*}
& \FE1{a;b,b'}c{x,y} = \GG c{b,b',c-b-b'} \iint_{\substack{u,v>0\\u+v<1}}
 \frac{u^{b-1} v^{b'-1} (1-u-v)^{c-b-b'-1}} {(1-ux-vy)^a} \,du\,dv,  \\
&\hskip12em \vphantom{\int} \qquad b,b'>0, \; c-b-b'>0,
\end{align*}
we~arrive~at
\begin{align*}
2c_{pq}(s) &= \GG{p+q+n,s+1,p+n,q+n}{n,n,p,p,q,q,n+s+1} \int_0^1 \int_0^1 \iint_{\substack{u,v>0\\u+v<1}}
 x^{q-1}(1-x)^{n-1}y^{p-1}(1-y)^{n-1} \\
& \hskip2em\times \frac{u^{p-1}v^{q-1}(1-u-v)^{n+s}}
  {(1+\frac{ux}{1-x}+\frac{vy}{1-y})^{s+1}} \,du\,dv\,dx\,dy .
\end{align*}
Performing the change of variable $u=\tau\sigma$, $v=(1-\tau)\sigma$, $du\,dv=\sigma\,d\sigma\,d\tau$,
this transforms into
\begin{align*}
2c_{pq}(s) &= \GG{p+q+n,s+1,p+n,q+n}{n,n,p,p,q,q,n+s+1} \int_0^1 \int_0^1 \int_0^1 \int_0^1
 x^{q-1}(1-x)^{n-1}y^{p-1}(1-y)^{n-1} \\
&\hskip4em\times \frac{\sigma^{p+q-1}(1-\sigma)^{n+s}\tau^{p-1}(1-\tau)^{q-1}}
  {(1+\frac{\sigma\tau x}{1-x}+\frac{\sigma(1-\tau)y}{1-y})^{s+1}} \,d\sigma\,d\tau\,dx\,dy  \\
&= \GG{p+q+n,s+1,p+n,q+n}{n,n,p,p,q,q,n+s+1} \int_0^1 \int_0^1 \int_0^1 \int_0^1
 x^{q-1}(1-x)^{n+s}y^{p-1}(1-y)^{n+s} \\
&\hskip2em\times\frac{\sigma^{p+q-1}(1-\sigma)^{n+s}\tau^{p-1}(1-\tau)^{q-1}}
  {((1-x)(1-y)+\sigma\tau x(1-y)+\sigma(1-\tau)y(1-x))^{s+1}} \,d\sigma\,d\tau\,dx\,dy .
\end{align*}
To~get hands on large $p,q$ asymptotics, we~make one more change of variable from $x,y,\sigma$
to $X,U,S$ via
\begin{align*}
& x=e^{-XU/q}, \quad y=e^{-(1-X)U/p}, \quad \sigma=e^{-S/(p+q)}, \\
&\hskip4em dx\,dy\,d\sigma = \frac{xy\sigma U}{pq(p+q)} \,dX\,dU\,dS,
\end{align*}
and also find it convenient to multiply both sides by $s+1$ and to introduce the function
$G(w):=\frac{1-e^{-w}}w$. This leads~to
\begin{align*}
& 2(s+1)c_{pq}(s) = \GG{p+q+n,s+2,p+n,q+n}{n,n,p,p,q,q,n+s+1} \int_0^\infty \int_0^\infty \int_0^1 \int_0^1 \\
&\;\times
 e^{-U-S} \Big[\frac{XU}q \frac{(1-X)U}p \frac S{p+q} G\Big(\frac{XU}q\Big) G\Big(\frac{(1-X)U}p\Big)
  G\Big(\frac S{p+q}\Big)\Big]^{n+s} \tau^{p-1}(1-\tau)^{q-1}  \\
&\;\times
 \Big[ \frac{XU}q \frac{(1-X)U}p G\Big(\frac{XU}q\Big) G\Big(\frac{(1-X)U}p\Big)
   +\tau e^{-\frac S{p+q}-\frac{XU}q} \frac{(1-X)U}p G\Big(\frac{(1-X)U}p\Big) \\
&\hskip4em
   +(1-\tau) e^{-\frac S{p+q}-\frac{(1-X)U}p} \frac{XU}q G\Big(\frac{XU}q\Big) \Big]^{-s-1}
   \, \frac{U\,d\tau\,dX\,dU\,dS}{pq(p+q)} .
\end{align*}

Now we specialize to the situation of the theorem, i.e.~take $p=P\lambda$, $q=(1-P)\lambda$,
with $0<P<1$ fixed and $\lambda\to+\infty$. This yields the huge formula
\[ \begin{aligned}
& 2(s+1)c_{pq}(s) = \GG{p+q+n,s+2,p+n,q+n}{n,n,p,p,q,q,n+s+1} \int_0^\infty \int_0^\infty \int_0^1 \int_0^1 \\
&\hskip2em
 e^{-U-S} \frac{X^{n+s}(1-X)^{n+s}S^{n+s}}{[P(1-P)\lambda^3]^{n+s+1}}
 \Big[G\Big(\frac{U_1}{\lambda}\Big) G\Big(\frac{U_2}{\lambda}\Big) G\Big(\frac S\lambda\Big)\Big]^{n+s} \\
&\hskip-2em
 \frac{(\tau^P(1-\tau)^{1-P})^\lambda / (\tau (1-\tau)) U^{2n+2s+1} \lambda^{s+1} \,d\tau\,dX\,dU\,dS }
  {\Big[ \frac{U_1U_2}{\lambda} G\Big(\frac{U_1}{\lambda}\Big) G\Big(\frac{U_2}{\lambda}\Big)
   +\tau e^{-\frac S\lambda-\frac{U_1}{\lambda}} U_2 G\Big(\frac{U_2}{\lambda}\Big)
   +(1-\tau) e^{-\frac S\lambda-\frac{U_2}{\lambda}} U_1 G\Big(\frac{U_1}{\lambda}\Big) \Big]^{s+1}} ,
 \end{aligned}
\label{tYD} \]
where for typographical reasons we have momentarily introduced the notations $U_1:=\frac{XU}{1-P}$, $U_2=\frac{(1-X)U}P$.
Let $g_k$ momentarily stand for the Taylor coefficients of the (entire) function $G(w)^{n+s}$:
$$ G(w)^{n+s} =: \sum_{k=0}^\infty g_k w^k, \qquad g_0=1.  $$
Then
\[ \Big[G\Big(\frac{U_1}{\lambda}\Big) G\Big(\frac{U_2}{\lambda}\Big) G\Big(\frac S\lambda\Big)\Big]^{n+s}
 = \sum_{j,k,l} g_j g_k g_l \frac{U_1^j U_2^k S^l} {\lambda^{j+k+l}} .  \label{tYE}  \]
Also,
\[ \begin{aligned}
& 
\frac{U_1U_2}{\lambda} G\Big(\frac{U_1}{\lambda}\Big) G\Big(\frac{U_2}{\lambda}\Big) 
  +\tau e^{-\frac S\lambda-\frac{U_1}{\lambda}} U_2 G\Big(\frac{U_2}{\lambda}\Big) 
 +(1-\tau) e^{-\frac S\lambda-\frac{U_2}{\lambda}} U_1 G\Big(\frac{U_1}{\lambda}\Big) \\
& = \Big[\tau U_2+(1-\tau)U_1\Big]
 \Big[1+\sum_{m=1}^\infty \lambda^{-m} \frac{p_{m+1}(U_1,U_2,S,\tau)}
   {\tau U_2+(1-\tau)U_1} \Big] , \end{aligned} \label{tYF}  \]
where $p_{m+1}$ is a polynomial homogeneous of degree $m+1$ in $U_1,U_2,S$ and of degree 1 in~$\tau$.
Feeding \eqref{tYE} and~\eqref{tYF} into~\eqref{tYD}, we~arrive~at
\[ \begin{aligned}
& 2(s+1)c_{pq}(s) = \GG{p+q+n,s+2,p+n,q+n}{n,n,p,p,q,q,n+s+1} \int_0^\infty \int_0^\infty \int_0^1 \int_0^1 \\
&\hskip2em \frac{e^{-U-S} [X(1-X)S]^{n+s} U^{2n+2s+1} } {[P(1-P)]^{n+s+1}\lambda^{3n+2s+2}\tau(1-\tau) [\tau U_2+(1-\tau)U_1]^{s+1}} \\
&\hskip2em \sum_{j=0}^\infty \frac{a_{2j}(U_1,U_2,S,\tau)} {\lambda^j((1-\tau)U_1+\tau U_2)^j} (\tau^P(1-\tau)^{1-P})^\lambda
 \,d\tau\,dX\,dU\,dS, \end{aligned} \label{tYG}  \]
where $a_{2j}$ is a polynomial homogeneous of degree $2j$ in $U_1,U_2,S$ and of degree at most $j$ in~$\tau$, $a_0\equiv1$.
Carrying out the $X,S$ and $U$ integrations in \eqref{tYG} already produces the desired asymptotic expansion
in decreasing powers of~$\lambda$; it~only remains to treat the $\tau$ integral. To~do that, we~recall the
standard asymptotics of a Laplace integral, see e.g.~\cite[\S2.1, Theorem~1.3]{Fed}: if~$f,\cS$ are smooth
real-valued functions on a finite interval $[a,b]$, and $\cS$ attains its maximum at a unique point
$\tau_0\in(a,b)$, with $\cS''(\tau_0)\neq0$, then the Laplace integral
$$ \mathcal I(\lambda) = \int_a^b f(\tau) e^{\lambda\cS(\tau)} \,d\tau  $$
possesses the following asymptotics as $\lambda\to+\infty$:
$$ \mathcal I(\lambda) \approx e^{\lambda\cS(\tau_0)} \sum_{k=0}^\infty c_k \lambda^{-k-\frac12},  $$
with
$$ c_k = \frac{\Gamma(k+\frac12)}{(2k)!} \frac{d^{2k}}{d\tau^{2k}} [f(\tau)\cS(\tau,\tau_0)^{-k-\frac12}]_{\tau=\tau_0},  $$
where $\cS(\tau,\tau_0):=\frac{\cS(\tau_0)-\cS(\tau)}{(\tau-\tau_0)^2/2}$; in~particular,
$$ c_0 = f(\tau_0) \sqrt{\frac{2\pi}{-\cS''(\tau_0)}}.  $$
Applying this to~\eqref{tYG}, taking for $\cS$ the function $\cS(\tau)=P\log\tau+(1-P)\log(1-\tau)$,
with $\tau_0=P$ (so,~in~particular, $\cS(\tau,\tau_0)=\sum_{j=0}^\infty \frac{2(\tau-P)^j}{j+2}
((1-P)^{-j-1}-(-P)^{-j-1})$), and for $f$ all the remaining terms of the integrand, we~get
\begin{align*}
& 2(s+1)c_{pq}(s) = \GG{p+q+n,s+2,p+n,q+n}{n,n,p,p,q,q,n+s+1} (P^P(1-P)^{(1-P)})^\lambda  \\
&\hskip2em \int_0^\infty \int_0^\infty \int_0^1 \frac{e^{-U-S} [X(1-X)S]^{n+s} U^{2n+2s+1} }
  {[P(1-P)]^{n+s+1}\lambda^{3n+2s+2}}  \\
&\hskip2em \sum_{j,k=0}^\infty \GG{k+\frac12}{2k+1} \frac{d^{2k}}{d\tau^{2k}}
   \Big[\frac{a_{2j}(U_1,U_2,S,\tau)}
    {\tau(1-\tau) [(1-\tau)U_1+\tau U_2]^{j+s+1}} \cS(\tau,P)^{-k-\frac12} \Big]_{\tau=P}
   \lambda^{-k-\frac12-j} \,dX\,dU\,dS  \\
&= \GG{p+q+n,s+2,p+n,q+n}{n,n,p,p,q,q,n+s+1} \sqrt\pi (P^P(1-P)^{(1-P)})^\lambda \\
&\hskip2em \sum_{j,k=0}^\infty \lambda^{-3n-2s-\frac52-j-k}
 \int_0^\infty \int_0^\infty \int_0^1 \frac{e^{-U-S} [X(1-X)S]^{n+s} U^{2n+s-j}} {[P(1-P)]^{n+s+\frac32}}
  \Phi_{jk}(X,\tfrac1P,\tfrac1{1-P},U,S) \,dX\,dU\,dS
\end{align*}
with some polynomials $\Phi_{jk}$ in the indicated variables, $\Phi_{00}\equiv1$.
Carrying out the $S,U$ and $X$ integrations, we~obtain
\[ \begin{aligned}
2(s+1)c_{pq}(s) &= \GG{p+q+n,s+2,p+n,q+n}{n,n,p,p,q,q,n+s+1} \frac{\sqrt\pi(P^P(1-P)^{1-P})^\lambda}
  {[P(1-P)]^{n+s+\frac32}} \\
&\hskip4em  \lambda^{-3n-2s-\frac52} \sum_{m=0}^\infty b_m(\tfrac1P,\tfrac1{1-P}) \lambda^{-m},
\end{aligned} \label{tYH}  \]
with some polynomials $b_m$, $b_0\equiv\frac{\Gamma(n+s+1)^3\Gamma(2n+s+1)}{\Gamma(2n+2s+2)}$. 
(All~of $p_{m+1}$, $a_{2j}$, $\Phi_{jk}$ and $b_m$ depends also on~$s$, although this is not 
explicitly reflected by the notation.) Now~we employ the fact that
$2(s+1)c_{pq}(s)\to1$ as $s\searrow1$ (since the measures $(s+1)(1-t)^s\,dt$ converge weakly to
the Dirac mass at $t=1$); dividing both sides by~\eqref{tYH} by the same expressions with $s=-1$,
and restoring the variables $p=P\lambda$, $q=(1-P)\lambda$, we~finally arrive~at
$$ (s+1)c_{pq}(s) \approx \GG{2n+s+1,n+s+1,n+s+1,s+2}{n,n,2n+2s+2} (pq)^{-s-1} \Big[ 1+\sum_{j=1}^\infty A_j(p,q) \Big], $$
with some functions $A_j$ homogeneous of degree~$-j$. But~this is precisely~\eqref{tYC},
completing the proof.   
\end{proof}

Combining the last theorem with \eqref{tTR} from the Introduction, we~see that
$c_{pq}(s)\approx(p+1)^{-s-1}(q+1)^{-s-1}$ as $p+q\to+\infty$ (for fixed~$s$).
As~argued in the beginning of this section, the ``leading order'' term of $K_s(z,w)$
can thus be expected to be the same as for the function
\[ F_s(r\zeta,R\eta) := \sum_{p,q} S^{pq}(r) S^{pq}(R) H^{pq}(\cdd\zeta\eta)
 (p+\tfrac{n-1}2)^{s+1} (q+\tfrac{n-1}2)^{s+1}.  \label{tYI}  \]
We~conclude this section by discussing the boundary singularity of~$F_s$.

\begin{theorem} \label{PH}
For $n>1$ and $s=0,1,2,\dots$
\[ \begin{aligned}
&F_s(z,w) = \LL^{s+1} \Big[ \frac{\Gamma(n)}{2\pi^n} \frac{(1-y_2)^n}{(1-x_2)^n(1-y_1)^n}
 \sum_{i_1=0}^n \sum_{i_2,j_1=0}^{n-i_1}
 \frac{(-n)_{i_1+i_2} (-n)_{i_1+j_1} (n)_{i_2} (n)_{j_1}} {i_1!i_2!j_1!(n)_{i_1+i_2+j_1}} \\
&\hskip0em x_1^{i_1} \Big(\frac{x_1-y_1}{1-y_1}\Big)^{i_2} \Big(\frac{x_1-x_2}{1-x_2}\Big)^{j_1}
 \FF21{i_2+n,j_1+n}{i_1+i_2+j_1+n}{1-\frac{(1-x_1)(1-y_2)}{(1-x_2)(1-y_1)}} \Big]
 \Big|_{\substack{x_1=|z|^2, x_2=\spr{z,w},\\ y_1=\spr{w,z}, y_2=|w|^2}}, \end{aligned}  \label{tYJ}  \]
where $\LL$ is the linear differential operator
\[ \LL:=(x_2y_1-x_1y_2)\frac{\partial^2}{\partial x_2\partial y_1}
 +\frac{n-1}2 \Big(x_2\frac\partial{\partial x_2}+y_1\frac\partial{\partial y_1}\Big)
 +\frac{(n-1)^2}4 I.  \label{tYK}  \]
\end{theorem}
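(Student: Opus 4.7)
The plan is to prove that, when the Szeg\"o kernel is written through its spectral expansion~\eqref{tVJ}, the operator $\LL$ acts on each summand $T_{pq}(z,w):=S^{pq}(r)S^{pq}(R)H^{pq}(\cdd\zeta\xi)$ as multiplication by $(p+\tfrac{n-1}{2})(q+\tfrac{n-1}{2})$. Iterating $s+1$ times then yields $\LL^{s+1}K\Sz=F_s$, and substituting the closed-form expression for $K\Sz$ from Theorem~\ref{PD} (regarded as a function of the four formal variables $x_1,x_2,y_1,y_2$ before $\LL^{s+1}$ is applied and evaluated at $(|z|^2,\spr{z,w},\spr{w,z},|w|^2)$ afterwards) produces exactly the bracketed expression in~\eqref{tYJ}.

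For the eigenvalue identity, observe that since $T_{pq}$ is $\Un$-invariant it is a function of the four variables $x_1=|z|^2$, $x_2=\spr{z,w}$, $y_1=\spr{w,z}$, $y_2=|w|^2$, on which $\LL$ acts. Assuming $p\ge q$ by symmetry, combining~\eqref{tVE} and~\eqref{tVF} expresses
\[
T_{pq}=C\,s^{pq}(x_1)\,s^{pq}(y_2)\sum_{k=0}^{q}c_k\,(x_1y_2)^{q-k}\,x_2^{p-q+k}\,y_1^{k},
\]
where $c_k=(-q)_k(n+p-1)_k/[(p-q+1)_k\,k!]$ are the Taylor coefficients of the ${}_2F_1(-q,n+p-1;p-q+1;\cdot)$ factor in $H^{pq}$, and $s^{pq}$ and $C$ absorb the remaining radial hypergeometric factors. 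Because $\LL$ involves no derivatives in $x_1$ or $y_2$, any factor depending only on $(x_1,y_2)$ commutes past $\LL$, and a direct one-line computation gives
\[
\LL\bigl(x_2^a y_1^b\bigr)=\bigl(a+\tfrac{n-1}{2}\bigr)\bigl(b+\tfrac{n-1}{2}\bigr)x_2^a y_1^b - ab\,x_1 y_2\,x_2^{a-1}y_1^{b-1}.
\]
Applying this termwise and matching the coefficients of each monomial $(x_1y_2)^{q-k}x_2^{p-q+k}y_1^{k}$ on the two sides of the desired eigenvalue equation reduces everything to the two-term recursion
\[
c_{k+1}(p-q+k+1)(k+1)=-c_k\,(q-k)(p+k+n-1),
\]
which is precisely the recursion satisfied by the $c_k$. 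The algebraic identity that makes this work, namely
\[
\bigl(p-q+k+\tfrac{n-1}{2}\bigr)\bigl(k+\tfrac{n-1}{2}\bigr)-\bigl(p+\tfrac{n-1}{2}\bigr)\bigl(q+\tfrac{n-1}{2}\bigr)=(k-q)(p+k+n-1),
\]
in fact dictates the precise form of the zeroth-order term $\tfrac{(n-1)^2}{4}I$ in~\eqref{tYK}.

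With the eigenvalue identity established, iteration gives $\LL^{s+1}T_{pq}=(p+\tfrac{n-1}{2})^{s+1}(q+\tfrac{n-1}{2})^{s+1}T_{pq}$, and summing reproduces $F_s$ by~\eqref{tYI}. Since each $T_{pq}$ is $M$-harmonic, hence real-analytic, on $\Bn\times\Bn$, and since \eqref{tVJ} converges locally uniformly there (being the reproducing-kernel expansion of $H^2\mh(\Bn)$), termwise application of the finite-order differential operator $\LL^{s+1}$ is legitimate by standard elliptic regularity, giving $\LL^{s+1}K\Sz=F_s$ pointwise on $\Bn\times\Bn$. The main obstacle is the combinatorial bookkeeping in the eigenvalue identity: one has to reindex the sum carefully to recover the ${}_2F_1$ recursion, and the zeroth-order coefficient $(n-1)^2/4$ together with the first-order terms of $\LL$ must conspire exactly to produce the factor $(k-q)(p+k+n-1)$, with no room to spare.
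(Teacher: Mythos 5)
Your argument is correct, but it reaches the eigenvalue identity for $\LL$ by a genuinely different computation than the paper. The paper introduces the \emph{intrinsic} tangential operator $D:=-\tfrac14\dsph-\tfrac14\cR^2+\tfrac{(n-1)^2}4 I$ acting on the $z$-variable, for which $D|\bhpq=(p+\tfrac{n-1}2)(q+\tfrac{n-1}2)I$ is immediate from the known eigenvalues \eqref{tYA} of $\dsph$ and $\cR$; this gives $F_s=D^{s+1}K\Sz$ for free, and the only work is a one-line computation of $D$ on the monomials $\spr{z,w}^p\spr{w,z}^q|z|^{2j}|w|^{2m}$, which translates $D$ into $\LL$ on the formal lift $x_1^jx_2^py_1^qy_2^m$ (after which one substitutes the power-series expansion \eqref{tTS} of $K\Sz$). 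You instead bypass $D$ altogether: you take the formal lift of each spectral summand $T_{pq}=S^{pq}(r)S^{pq}(R)H^{pq}(\cdd\zeta\xi)$, observe $\LL$ is first order in $x_2$ and $y_1$ only, compute $\LL(x_2^ay_1^b)$ directly, and reduce the eigenvalue claim $\LL\wt T_{pq}=(p+\tfrac{n-1}2)(q+\tfrac{n-1}2)\wt T_{pq}$ to the two-term contiguity recursion for the Taylor coefficients $c_k=(-q)_k(n+p-1)_k/[(p-q+1)_kk!]$ of the ${}_2F_1$ factor in $H^{pq}$, using the (correct) algebraic identity $(p-q+k+\tfrac{n-1}2)(k+\tfrac{n-1}2)-(p+\tfrac{n-1}2)(q+\tfrac{n-1}2)=(k-q)(p+k+n-1)$. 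The boundary cases $k=0$ and $k=q$ fall out because $ab=0$ when $k=0$ and $c_{q+1}=0$, and the symmetry $x_2\leftrightarrow y_1$ of $\LL$ handles $p<q$. What you buy is a self-contained verification that $\LL$ has the intended spectrum without ever appealing to $\dsph$ or $\cR$; what the paper's route buys is that the spectral statement is automatic and only the (much shorter) monomial translation $D\leftrightarrow\LL$ needs checking. Two small remarks: your convergence justification invokes ``elliptic regularity'' loosely --- the cleanest argument is simply that the bracketed expression and its spectral series are the same convergent power series in $(x_1,x_2,y_1,y_2)$ on a polydisc, and $\LL^{s+1}$, having polynomial coefficients, preserves that; and it is worth stating explicitly that the formal power series of \eqref{tTS}, the bracketed Theorem~\ref{PD} expression, and $\sum_{p,q}\wt T_{pq}$ all coincide, since $\LL^{s+1}$ applied to two different lifts that merely agree on the constraint surface $y_1=\overline{x_2}$ could a~priori differ.
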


\begin{proof} Consider the differential operator
$$ D := -\frac{\dsph}4-\frac{\cR^2}4 + \frac{(n-1)^2}4 I.  $$
By~\eqref{tYA},
$$ D|\bhpq = (p+\tfrac{n-1}2) (q+\tfrac{n-1}2) I | \bhpq .  $$
Consequently, using~\eqref{tVJ},
\[ F_s(z,w) = D^{s+1} K\Sz(z,w),  \label{tYL}   \]
with the understanding that, to~fix ideas, $D$~is always applied to the $z$ variable.
Note that since $\dsph$ and $\cR$ are ``tangential'' operators --- that~is, they act only
on the $\zeta$ variable in the polar coordinates $z=r\zeta$ --- we~have $D(fg)=fDg$ for
any function $f$ depending on~$|z|$ only. Substituting \eqref{tTS} for the $K\Sz$ in~\eqref{tYL},
we~thus obtain
\begin{align*}
F_s(z,w) &= \frac{\Gamma(n)}{2\pi^n} (1-|z|^2)^n (1-|w|^2)^n \sum_{p,q,j,m=0}^\infty
 \frac{(n)_{p+j}(n)_{q+j}(n)_{p+m}(n)_{q+m}}{(n)_{p+q+j+m}}  \\
&\hskip2em\times D^{s+1} \frac{\spr{z,w}^p\spr{w,z}^q|z|^{2j}|w|^{2m}}{p!q!j!m!} .
\end{align*}
Now~by direct computation
\begin{align*}
& D[\spr{z,w}^p\spr{w,z}^q|z|^{2j}|w|^{2m}] \\
&\hskip4em = \Big[ \Big(1-\frac{|z|^2|w|^2}{|\spr{z,w}|^2}\Big) pq + \frac{n-1}2(p+q) + \Big(\frac{n-1}2\Big)^2 \Big]
 \spr{z,w}^p\spr{w,z}^q|z|^{2j}|w|^{2m} \\
&\hskip4em = \LL[x_1^j x_2^p y_1^q y_2^m]_{x_1=|z|^2,x_2=\spr{z,w},y_1=\spr{w,z},y_2=|w|^2} .
\end{align*}
Hence
$$ F_s(z,w) = \LL^{s+1} \Big[\frac{\Gamma(n)}{2\pi^n}(1-x_1)^n(1-y_2)^n \FD{n,n,n,n}n{x_1,x_2,y_1,y_2}\Big]
 _{\substack{x_1=|z|^2,x_2=\spr{z,w},\\y_1=\spr{w,z},y_2=|w|^2}} .  $$
Replacing the expression in the square brackets by the one from Corollary~\ref{PC} yields the desired claim. 
\end{proof}

\begin{corollary} \label{PI}
For $n>1$ and $s=0,1,2,\dots$, $F_s\in C^{n-1}(\bbd)$.
\end{corollary}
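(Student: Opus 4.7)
The plan is to use the explicit formula of Theorem~\ref{PH}, $F_s(z,w)=\LL^{s+1}\Phi\bigr|_{\text{subs}}$, where $\Phi(x_1,x_2,y_1,y_2)$ is the four-variable bracket in \eqref{tYJ}; upon the substitution $x_1=|z|^2$, $x_2=\spr{z,w}$, $y_1=\spr{w,z}$, $y_2=|w|^2$, $\Phi$ coincides with the expression for $K\Sz(z,w)$ given by Theorem~\ref{PD}. The crucial structural feature is that $\LL$ involves only $\partial_{x_2}$ and $\partial_{y_1}$, so after substitution these are derivatives in the ``good'' variables $\spr{z,w}$, $\spr{w,z}$, whose denominators $1-\spr{z,w}$, $1-\spr{w,z}$ are bounded away from~$0$ on $\bbd$.

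First I would repeat the local analysis near the boundary carried out in the proof of Proposition~\ref{PE}. Writing $Q:=(1-x_2)^{-1}$, $\overline Q:=(1-y_1)^{-1}$ and applying the analytic-continuation formula from \cite[\S2.10(14)]{BE} to each ${}_2F_1$ in \eqref{tYJ} (for which $c-a-b=i_1-n$), one obtains a decomposition
\[
\Phi = \mathcal A(x_1,y_2,x_2,y_1) + \mathcal B(x_1,y_2,x_2,y_1)\,\log\bigl[(1-x_1)(1-y_2)|Q|^2\bigr],
\]
in which $\mathcal A,\mathcal B$ are polynomials in $Q,\overline Q$ with coefficients analytic in $(1-x_1,1-y_2)$ near $(0,0)$, and $\mathcal B$ carries the $(1-x_1)^n(1-y_2)^n$ prefactor forced by the symmetry argument in the proof of Proposition~\ref{PE}. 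Next I would apply $\LL^{s+1}$: since $\LL$ does not differentiate $x_1$ or $y_2$, the factors $\log(1-x_1)$, $\log(1-y_2)$, $(1-x_1)^n$, $(1-y_2)^n$ act as multiplicative coefficients; at the same time $\LL$ maps polynomials in $Q,\overline Q$ with smooth coefficients back into the same class (differentiating $Q^l$ yields $lQ^{l+1}$, and $\partial_{x_2}\log(1-x_2)=Q$ remains rational). Hence $\LL^{s+1}\Phi$ admits a decomposition of exactly the same shape as $\Phi$, still with the $(1-x_1)^n(1-y_2)^n$ divisibility of the log-coefficient intact.

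After the substitution, on $\bbd$ every expression involving $Q,\overline Q,\log(1-x_2),\log(1-y_1)$ is $C^\infty$ (because $|1-\spr{z,w}|$ and $|1-\spr{w,z}|$ stay bounded away from $0$ there), while the only non-smoothness comes from $(1-|z|^2)^n\log(1-|z|^2)$ and $(1-|w|^2)^n\log(1-|w|^2)$, which are $C^{n-1}$ but not $C^n$; this yields $F_s\in C^{n-1}(\bbd)$. The main obstacle is the bookkeeping in the middle step: one must verify that each of the $s+1$ applications of $\LL$ does not introduce new logarithms with coefficients escaping the $(1-x_1)^n(1-y_2)^n$ vanishing, nor lower powers of $(1-x_1)$ or $(1-y_2)$ multiplying the logarithm. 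Because $\LL$ never touches $x_1$ or $y_2$ and preserves the algebra generated by $Q,\overline Q$ and functions smooth in $(x_1,y_2)$, this verification reduces to mechanical inspection and presents no genuine difficulty.
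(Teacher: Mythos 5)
Your proof is essentially the same as the paper's: both start from Theorem~\ref{PH}, reuse the decomposition \eqref{tWT} from Proposition~\ref{PE} (a smooth piece plus a $\log[(1-x_1)(1-y_2)]$-piece whose coefficient vanishes to order $n$ at $x_1=1$ and $y_2=1$), and observe that $\LL$ differentiates only in $x_2,y_1$ and hence preserves this form. The only small imprecision is calling $\mathcal A,\mathcal B$ "polynomials" in $Q,\overline Q$ — the coefficients $a_{jk}(Q,\overline Q),b_{jk}(Q,\overline Q)$ are in general power series holomorphic on $\Re Q>0$, not polynomials — but this does not affect the argument, since such functions are still $C^\infty$ there and stable under the application of $\LL$.
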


\begin{proof} We~have seen in course of the proof of Proposition~\ref{PE}, cf.~\eqref{tWT}, that
\[ K\Sz(z,w) = \sum_{j,k=0}^\infty a_{jk}(Q,\overline Q)(1-x_1)^j(1-y_2)^k
 + \sum_{j,k=0}^\infty b_{jk}(Q,\overline Q)(1-x_1)^j(1-y_2)^k \log[(1-x_1)(1-y_2)],  \label{tYM} \]
where we have again set $x_1=|z|^2$, $x_2=\spr{z,w}$, $y_1=\spr{w,z}$, $y_2=|w|^2$ and
$Q:=1/(1-x_2)$, $\overline Q=1/(1-y_1)$, and $a_{jk},b_{jk}$ are holomorphic functions of
$Q,\overline Q$ in the right half-plane $\Re Q>0$. Now~since $\LL$ involves only differentiations
with respect to the $x_2$ and $y_1$ variables, the application of $\LL^{s+1}$ to \eqref{tYM}
preserves this form of the right-hand side (only the functions $a_{jk},b_{jk}$ will get changed).
Since, again as in the proof of Proposition~\ref{PE}, the first summand is $C^\infty$ on
$|1-x_1|<1$, $|1-y_2|<1$ and $\Re Q>0$, while the second summand is $C^{n-1}$ there,
the~claim follows.  
\end{proof}

For $z,w\in\Bn$, the three quantities
$$ x_1=|z|^2, \quad x_2=\overline y_1=\spr{z,w}, \quad y_2=|w|^2,  $$
are preserved upon replacing $z,w$ by $Uz,Uw$ with any $U\in\Un$, and satisfy
\[ 0\le x_1,y_2<1, \qquad x_2\in\CC, \; |x_2|^2\le x_1 y_2 .  \label{tYN}  \]
Conversely, all triples $x_1,x_2,y_2$ satisfying \eqref{tYN} arise in this way,
and if $z',w'\in\Bn$ give rise to the same triple $x_1,x_2,y_2$ as $z,w$,
then there is some $U\in\Un$ for which $z'=Uz$ and $w'=Uw$. In~other words,
the~map $(z,w)\mapsto(|z|^2,\spr{z,w},|w|^2)$ is a bijection of the equivalence
classes of $\Bn\times\Bn$ modulo the diagonal action of $\Un$ onto the set of
all triples $x_1,x_2,y_2$ satisfying~\eqref{tYN}.

Now instead of $z,w$, we~can apply the observation in the preceding paragraph to
the pair $z,\phi_zw$. Since
$$ \phi_{Uz}(Uw) = U(\phi_z w), \qquad\forall z,w\in\Bn, \; U\in\Un,  $$
it~again transpires that the map
\[ (z,w)\mapsto(U,V,Z), \qquad U:=|z|^2, \; V:=|\phi_zw|^2, \; Z:=\spr{z,\phi_zw} , \label{tYO}  \]
is~a~bijection of the equivalence classes of $\Bn\times\Bn$ modulo the diagonal
action of $\Un$ onto the set
\[ \Omega := \{(U,V,Z): 0\le U,V<1, \; Z\in\CC,\; |Z|^2\le UV \} .  \label{tYP}  \]
We~conclude by expressing the singularity of $F_s(z,w)$ at the boundary diagonal
in terms of the ``coordinates'' $(U,V,Z)$.

\begin{corollary} \label{PJ}
For $n>1$ and $s=0,1,2,\dots$,
$$ F_s(z,w) = \frac{(1-V)^n}{(1-U)^{n+s+1}}
\sum_{i_1=0}^n \sum_{i_2,j_1=0}^{n-i_1} \sum_{k=0}^{s+1}
 P_{i_1 i_2 j_1 k}(U,Z,\oZ,V) \FF21{i_2+m+k,j_1+n+k}{i_1+i_2+j_1+n+k}V, $$
where $P_{i_1 i_2 j_1 k}(U,Z,\oZ,V)$ is a polynomial of degree at most
$i_1+s+1$, $j_1+s+1$, $i_2+s+1$ and $k+s+1$, respectively, in~the indicated variables.
\end{corollary}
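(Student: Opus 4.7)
The plan is to start from Theorem~\ref{PH}, namely $F_s=\LL^{s+1}[E]$ with $E$ the bracketed expression in~\eqref{tYJ}, evaluated at $x_1=|z|^2$, $x_2=\spr{z,w}$, $y_1=\spr{w,z}$, $y_2=|w|^2$, and then rewrite everything in the $U(n)$-invariant coordinates $(U,V,Z,\oZ)$ from~\eqref{tTQ}. Under the substitution, direct algebra gives $1-x_2=(1-U)/(1-Z)$, $1-y_1=(1-U)/(1-\oZ)$ and $(1-y_2)/[(1-x_2)(1-y_1)]=(1-V)/(1-U)$, so the prefactor $(1-y_2)^n/[(1-x_2)^n(1-y_1)^n]$ equals $(1-V)^n/(1-U)^n$, while $(x_1-x_2)/(1-x_2)=Z$ and $(x_1-y_1)/(1-y_1)=\oZ$. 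The seed $E$ thus becomes
$$E=\frac{\Gamma(n)}{2\pi^n}\frac{(1-V)^n}{(1-U)^n}\sum_{i_1,i_2,j_1}\frac{(-n)_{i_1+i_2}(-n)_{i_1+j_1}(n)_{i_2}(n)_{j_1}}{i_1!i_2!j_1!(n)_{i_1+i_2+j_1}}U^{i_1}\oZ^{i_2}Z^{j_1}\FF21{i_2+n,j_1+n}{i_1+i_2+j_1+n}V,$$
which is precisely the claimed form for the case ``$s=-1$'' (only $k=0$ present, with $P_{i_1 i_2 j_1 0}$ of degrees $(i_1,j_1,i_2,0)$); this will serve as the base of an induction on~$s$.

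Next, express $\LL$ in the new coordinates. By the chain rule
$$\partial_{x_2}=-\frac{(1-Z)^2}{1-U}\partial_Z-\frac{(1-V)(1-Z)}{1-U}\partial_V,\qquad \partial_{y_1}=-\frac{(1-\oZ)^2}{1-U}\partial_{\oZ}-\frac{(1-V)(1-\oZ)}{1-U}\partial_V,$$
and using $x_2y_1-x_1y_2=(1-U)(Z\oZ-UV)/[(1-Z)(1-\oZ)]$, $x_2=(U-Z)/(1-Z)$, $y_1=(U-\oZ)/(1-\oZ)$, a routine computation yields
$$\LL=\frac{1}{1-U}\mathcal M+\frac{(n-1)^2}{4}I,$$
where $\mathcal M$ is a second-order differential operator involving only $\partial_Z,\partial_{\oZ},\partial_V$ (no $\partial_U$) with coefficients polynomial in $U,Z,\oZ,V$; in particular the coefficient of $\partial_V^2$ is $(1-V)^2(Z\oZ-UV)$.

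I would then proceed by induction on $s$. Write $F_{s-1}=(1-U)^{-(n+s)}G_{s-1}$ with $G_{s-1}=(1-V)^n\sum P^{(s-1)}_{i_1 i_2 j_1 k}\FF21{i_2+n+k,j_1+n+k}{i_1+i_2+j_1+n+k}V$ of the prescribed degrees. Since $\mathcal M$ contains no $\partial_U$,
$$F_s=\LL F_{s-1}=\frac{1}{(1-U)^{n+s+1}}\Bigl[\mathcal M G_{s-1}+\tfrac{(n-1)^2(1-U)}{4}G_{s-1}\Bigr].$$
The additive term multiplies each $P^{(s-1)}$ by $(1-U)$, raising its $U$-degree by one. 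For the action of $\mathcal M$ on a single summand $(1-V)^nP(U,Z,\oZ,V)\FF21{a_k,b_k}{c_k}V$ with $(a_k,b_k,c_k)=(i_2+n+k,j_1+n+k,i_1+i_2+j_1+n+k)$, one expands by the Leibniz rule and invokes $\partial_V\FF21{a,b}{c}V=(ab/c)\FF21{a+1,b+1}{c+1}V$, which shifts the level $k\mapsto k+1$; the multiplications by the polynomial coefficients of $\mathcal M$ raise the polynomial degrees by at most one in each variable, giving the bounds $(i_1+s+1,j_1+s+1,i_2+s+1,k+s+1)$ on $P^{(s)}_{i_1 i_2 j_1 k}$.

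The main obstacle is to limit the $V$-parameter shift to $k\le s+1$: the $\partial_V^2$-term of $\mathcal M$ would, \emph{a priori}, shift $k$ by two through $\partial_V^2\FF21{a,b}{c}V=\frac{(a)_2(b)_2}{(c)_2}\FF21{a+2,b+2}{c+2}V$. This step requires a careful combination of the hypergeometric differential equation
$$V(1-V)\partial_V^2\FF21{a,b}{c}V+[c-(a+b+1)V]\partial_V\FF21{a,b}{c}V-ab\FF21{a,b}{c}V=0$$
with the specific coefficient $(1-V)^2(Z\oZ-UV)$ of $\partial_V^2$ in $\mathcal M$ and with the Leibniz expansion of $\partial_V^2[(1-V)^nP\phi]$. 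Splitting $Z\oZ-UV=U(1-V)-(U-Z\oZ)$ and extracting the $V(1-V)$ factor required by the ODE, each level-$(k+2)$ contribution reduces to a polynomial combination of level-$k$ and level-$(k+1)$ terms whose coefficients in $V$ remain polynomial of degree at most $k+s+1$; degree bookkeeping through this reduction then completes the induction and yields the claimed form.
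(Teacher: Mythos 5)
Your overall strategy coincides with the paper's: rewrite the seed from Theorem~\ref{PH} and the operator $\LL$ in the $U(n)$-invariant coordinates $(U,V,Z,\oZ)$ and then induct on~$s$, using $\partial_V\FF21{a,b}{c}V=\frac{ab}{c}\FF21{a+1,b+1}{c+1}V$. Your coordinate computations (the formulas for $1-x_2$, $1-y_1$, $\partial_{x_2}$, $\partial_{y_1}$, $x_2y_1-x_1y_2$) are correct and match the form of $\LL$ the paper displays. The factorization $\LL=\frac1{1-U}\mathcal M+\frac{(n-1)^2}4 I$ with $\mathcal M$ free of $\partial_U$ is also correct, and this is what lets you pull $(1-U)^{-(n+s)}$ through.

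However, the key step you identify as the ``main obstacle'' is not actually handled by your argument. You write $Z\oZ-UV=U(1-V)-(U-Z\oZ)$ and say this lets you ``extract the $V(1-V)$ factor required by the ODE.'' The identity is algebraically true, but applying it to the $\partial_V^2$-coefficient $(1-V)^2(Z\oZ-UV)$ gives $U(1-V)^3-(U-Z\oZ)(1-V)^2$: neither piece contains a factor of $V$, so no $V(1-V)\partial_V^2$ term appears, and the hypergeometric ODE cannot be invoked. The split that \emph{does} introduce a $V$ factor is $Z\oZ-UV=Z\oZ(1-V)-V(U-Z\oZ)$, but even with that choice the first piece contributes $Z\oZ(1-V)^3\partial_V^2\phi_k$, which still produces a level-$(k+2)$ term and cannot be reduced by the ODE alone; iterating the split only regenerates the same type of obstruction. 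So as written, the induction does not close: you have not shown that the $k$-range stays within $\{0,\dots,s+1\}$ nor that the degree bounds survive, which is precisely the nontrivial content of the corollary.

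What actually makes the count work out is a re-indexing observation that neither your argument nor the paper's terse ``simple induction argument'' spells out. The quadruples $(i_1,i_2,j_1,k)$ and $(i_1',i_2',j_1',k')=(i_1-\delta,\,i_2+\delta,\,j_1+\delta,\,k+\delta)$ produce the \emph{same} triple of ${}_2F_1$-parameters $(a,b,c)$, so a term that naively lands at level $k+2$ via $\partial_V^2$ can (when $i_1\ge1$) be relabelled with $k'=k+1$ and $i_1'=i_1-1$; the monomial factor $(Z\oZ-UV)(1-V)^2$ supplies the extra $\oZ$- and $Z$-powers needed for the re-indexed degree bounds in those variables. The boundary case $i_1=0$ needs a further reduction. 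You should either carry out this re-indexing explicitly with the degree bookkeeping, or replace the incorrect split with a correct ODE-based reduction together with the re-indexing -- as it stands the argument has a genuine gap at exactly the step you flagged as delicate.
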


\begin{proof} We~still keep the previous notation $x_1=|z|^2$, $x_2=\spr{z,w}$, $y_1=\spr{w,z}$, $y_2=|w|^2$.
Taking $0$ and $w$ for $w_1$ and $w_2$, respectively, in~\eqref{tTT}, we~get
$$ Z=\spr{\phi_z0,\phi_zw} = 1-\frac{1-|z|^2}{1-\spr{z,w}} = \frac{x_1-x_2}{1-x_2},  $$
and similarly
$$ \oZ = \frac{x_1-y_1}{1-y_1}, \qquad V=1-\frac{(1-x_1)(1-y_2)}{(1-x_2)(1-y_1)}  $$
(which establishes~\eqref{tTQ}). In~terms of $U,V,Z$, \eqref{tWN}~therefore becomes simply
$$ K\Sz(z,w) = \frac{(1-V)^n}{(1-U)^n} \sum_{\substack{i_1+i_2\le n,\\i_1+j_1\le n}}
 a_{i_1 i_2 j_1} U^{i_1} \oZ^{i_2} Z^{j_1} \FF21{i_2+n,j_1+n}{i_1+i_2+j_1+n}V ,  $$
with $a_{i_1i_2j_1}:=\frac{\Gamma(n)}{2\pi^n}\frac{(-n)_{i_1+i_2}(-n)_{i_1+j_1}(n)_{i_2}(n)_{j_1}}
{i_1!i_2!j_1!(n)_{i_1+i_2+j_1}}$. On~the other hand, by~a~tedious but routine computation,
the operator $\LL$ expressed in the coordinates $U,V,Z$ takes the form
\begin{align*}
\LL &= \frac{|Z|^2-UV}{1-U} [|1-Z|^2\partial_Z\partial_\oZ
 + (1-\oZ)(1-V) \partial_\oZ \partial_V + (1-Z)(1-V) \partial_Z \partial_V
 + (1-V)^2\partial_V^2 - (1-V)\partial_V ]  \\
& - \frac{n-1}{2(1-U)} [(1-Z)(U-Z)\partial_Z+(1-\oZ)(U-\oZ)\partial_\oZ +(1-V)(2U-Z-\oZ)\partial_V]
 + \Big(\frac{n-1}2\Big)^2 I.
\end{align*}
Combining these two facts with the formula
\[ \partial_V \FF21{a,b}cV = \frac{ab}c \FF21{a+1,b+1}{c+1}V  \label{tYQ}  \]
for the derivative of a hypergeometric function, the~claim follows by a simple induction argument. 
\end{proof}

The~advantage of the coordinates $U,V,Z$ is that when $(z,w)$ approaches the boundary diagonal ---
that~is, essentially, when both $z$ and $w$ approach the same point $\zeta\in\pBn$ --- then of course
all of $x_1=|z|^2$, $x_2=\spr{z,w}$, $y_1=\spr{w,z}$ and $y_2=|w|^2$ tend to~$1$, but $|\phi_zw|^2=V$ can
behave in many ways: or instance, for $z=w$ one has $V=0$, while e.g.~for $z=(1-h)e_1$, $w=(1-h^2)e_1$ one
has $V\to1$ as $h\searrow0$. Thus the coordinates $U,V,Z$ capture how $z$ approaches $\zeta$
``relative'' to~$w$. Of~course, the downside of the formula in the last corollary is that it
completely hides the symmetry $z\leftrightarrow w$. At~the moment, we~do not know how to express
the boundary singularity of $F_s$ in a manner that would make this symmetry evident.

\section{Concluding remarks} \label{sec6}
\subsection{Formulas for $c_{pq}$} The~coefficients $c_{pq}(s)$ can be expressed in terms of
various multivariable hypergeometric functions. One~such expression comes from using the
formula for the Taylor coefficients of the square of a ${}_2\!F_1$ function \cite[\S4.3(14)]{BE}
$$ \FF21{a,b}cz^2 = \sum_{m=0}^\infty \FF43{a,b,1-c-m,-m}{c,1-a-m,1-b-m}1
 \frac{(a)_m(b)_m}{(c)_mm!}z^m ,  $$
yielding
\[ \begin{aligned}
 2c_{pq}(s) &= \GG{n+p,n+q}{n,n+p+q}^2 \sum_{m=0}^\infty \FF43{p,q,1-n-p-q-m,-m}{n+p+q,1-p-m,1-q-m}1 \\
&\hskip4em \times \frac{(p)_m(q)_m}{(n+p+q)_m m!} \GG{n+p+q+m,s+1}{n+p+q+m+s+1}, \end{aligned}
 \label{tZG}  \]
which however is not very useful. A~somewhat nicer expression is obtained upon expanding both
${}_2\!F_1$ factors in \eqref{tXH} into Taylor series and integrating term by~term; this gives
\[ \begin{aligned}
2c_{pq}(s) &= \GG{n+p,n+q}{n,n+p+q}^2 \sum_{j,k=0}^\infty \frac{(p)_j(q)_j}{(n+p+q)_j j!}
 \frac{(p)_k(q)_k}{(n+p+q)_k k!} \GG{n+p+q+j+k,s+1}{n+p+q+j+k+s+1}   \\
&= \GG{n+p,n+q}{n,n+p+q}^2 \GG{n+p+q,s+1}{n+p+q+s+1} \\
&\hskip4em\times \sum_{j,k=0}^\infty
 \frac{(p)_j(q)_j}{(n+p+q)_j j!} \frac{(p)_k(q)_k}{(n+p+q)_k k!}
 \frac{(n+p+q)_{j+k}}{(n+p+q+s+1)_{j+k}} . \end{aligned}
 \label{tZH}  \]
In~terms of the higher order hypergeometric function of two variables of Appell and
Kamp\'e de F\'eriet \cite[p.~150]{AKF}
$$ F\vast( \begin{matrix} \mu \\ \nu \\ \rho \\ \sigma \end{matrix} \vast|
 \begin{matrix} a_1,\dots,a_\mu \\ b_1,b'_1,\dots,b_\nu,b'_\nu \\ c_1,\dots,c_\rho \\ d_1,d'_1,\dots,d_\sigma,d'_\sigma \end{matrix} \vast|
 x,y \vast) := \sum_{j,k=0}^\infty \frac{\prod_{i=1}^\mu (a_i)_{j+k}}{\prod_{i=1}^\rho(c_i)_{j+k}}
 \frac{\prod_{i=1}^\nu(b_i)_j(b'_i)_k}{\prod_{i=1}^\sigma(d_i)_j(d'_i)_k} \frac{x^j y^k}{j!k!}  $$
this becomes
\[ 2c_{pq}(s) = \GG{n+p,n+p,n+q,n+q,s+1}{n,n,n+p+q,n+p+q+s+1}
 F\vast( \begin{matrix} 1 \\ 2 \\ 1 \\ 1 \end{matrix} \vast|
 \begin{matrix} n+p+q \\ p,p,q,q \\ n+p+q+s+1 \\ n+p+q,n+p+q \end{matrix} \vast| 1,1 \vast) .  \label{tZI}  \]
In~the notation of Karlsson and Srivastava \cite[p.~27]{KS}, the last function is denoted
$F^{1:2,2}_{1:1,1}\Big(\begin{matrix}p+q+n & : & p,q;p,q;\\p+q+n+s+1&:&p+q+n;p+q+n;\end{matrix}1,1\Big)$.
However, an~explicit expression for the value of these seems again not to be available.
In~fact, our computations at the end of Section~\ref{sec4} amount to an evaluation of~\eqref{tZI}
for the special case of $n=2$, $s=0$ and $p=q=1$.

\subsection{Uniformity of asymptotic expansions} In~Theorem~\ref{PG}, we~have found the asymptotics
of $c_{\lambda P,\lambda(1-P)}(s)$ as $\lambda\to+\infty$, for fixed $s$ and fixed $0<P<1$;
also, the case of $P\in\{0,1\}$ has been handled separately. We~expect that the asymptotic expansion
obtained is actually uniform in $P\in[0,1]$, and hence in fact yields the unrestricted asymptotics
of $c_{pq}(s)$ as $p+q\to+\infty$ (with $s$ fixed); however, at~the moment we can offer no proof that
this is indeed the case. This kind of difficulty arises also in other situations where two-parameter
asymptotics are involved, cf.~e.g.~\S II.7 in~\cite{Fed}.

\subsection{The Wallach set} In~the holomorphic case, the weighted Bergman kernels $K\hol_s(z,w)$,
$s>-1$, on~$\Bn$ actually extend by analytic continuation to a meromorphic function of $s\in\CC$,
and continue to be positive defnite functions on $\Bn\times\Bn$ for all $s\in(-n-1,+\infty)$;
the~last interval is called the \emph{Wallach set} of~$\Bn$, and there is a similar story for $\Bn$
replaced by any irreducible bounded symmetric domain in $\CC^n$ \cite{RV}. More precisely, if~one first
normalizes the measures $(1-|z|^2)^s\,dz$ on $\Bn$ to be of total mass~one --- that~is, in~other words,
multiplies them by~$1/c_{00}(s)$ --- then the normalized reproducing kernels
\[ c_{00}(s) K\hol_s(z,w) = (1-\spr{z,w})^{-n-1-s}  \label{tZJ}  \]
extend to a holomorphic function of $z,\overline w\in\Bn$ and $s\in\CC$, and this analytic continuation
is still a positive definite kernel in $(z,w)$ on $\Bn\times\Bn$ as long as $s>-n-1$
(and only for these~$s$).

Similar phenomenon prevails for the harmonic case \cite{EJMPA}. We~show that the \Mh case~is,
likewise, no~exception.

\begin{proposition} \label{PK}
The normalized \Mh weighted Bergman kernels
$$ c_{00}(s) K_s(z,w)  $$
extend by analytic continuation in $s$ to a meromorphic function on $\Bn\times\Bn\times\CC$,
and remain positive definite in $(z,w)$ as long as $s>-n-1$ (and only for such~$s$).

Thus, the ``\Mh Wallach set'' for $\Bn$ is the interval $(-n-1,+\infty)$.
\end{proposition}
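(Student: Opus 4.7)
The plan is to apply Proposition~\ref{PA}(iv) to the space $\HH=L^2\mh(\Bn,(1-|z|^2)^s\,dz)$ (well-defined for $s>-1$) to obtain the expansion
\[
c_{00}(s)K_s(r\zeta,R\xi)=\sum_{p,q\geq0}\frac{c_{00}(s)}{c_{pq}(s)}\,S^{pq}(r)S^{pq}(R)H^{pq}(\cdd\zeta\xi),
\]
and then to argue two things: first, that the ratios $c_{00}(s)/c_{pq}(s)$ extend meromorphically in~$s$ to all of~$\CC$; and second, that a formal series of this form is positive definite on $\Bn\times\Bn$ if and only if every coefficient $a_{pq}:=c_{00}(s)/c_{pq}(s)$ is nonnegative. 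The equivalence in the second claim follows from a standard orthogonality argument: each summand $S^{pq}(r)S^{pq}(R)H^{pq}(\cdd\zeta\xi)$ is the reproducing kernel of the finite-dimensional Hilbert space~$\bhpq$ and hence positive definite (the ``if'' direction), while the converse comes by fixing $r\in(0,1)$ and testing against continuous $f=\sum f_{pq}$ on~$\pBn$; using~\eqref{tVD} one obtains
\[
\iint_{\pBn\times\pBn}\overline{f(\zeta)}f(\xi)K(r\zeta,r\xi)\,d\sigma(\zeta)\,d\sigma(\xi) = \sum_{p,q}a_{pq}\,S^{pq}(r)^2\,\|f_{pq}\|^2_{L^2(d\sigma)},
\]
and since $S^{pq}(r)>0$ for $r\in(0,1)$ and the $f_{pq}$ are arbitrary, positive definiteness of the kernel forces every $a_{pq}\geq0$.

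For the meromorphic extension and sign analysis, the integral representation~\eqref{tXH} realizes $c_{pq}(s)$ as $\int_0^1 f_{pq}(t)(1-t)^s\,dt$ times positive constants, where $f_{pq}$ is smooth on~$[0,1]$; this extends meromorphically in~$s$ by standard Taylor-subtraction methods. More usefully, simplifying the Gamma quotient in the hypergeometric representation~\eqref{tZH} against that of~$c_{00}(s)$ gives the closed identity
\[
\frac{c_{00}(s)}{c_{pq}(s)}=\frac{\Gamma(n)^3\,\Gamma(n+p+q)\,(n+s+1)_{p+q}}{\Gamma(n+p)^2\Gamma(n+q)^2\,F_{pq}(s)},
\]
where $F_{pq}(s)$ denotes the meromorphic continuation of the double hypergeometric series appearing in~\eqref{tZH}. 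For $s>-n-1$, $n+p+q+s+1>0$ for every $p,q\geq0$, so every Pochhammer symbol $(n+s+1)_{p+q}$ and $(n+p+q+s+1)_{j+k}$ is strictly positive; combined with positivity of all $\Gamma$-factors, this makes $F_{pq}(s)>0$ and therefore $c_{00}(s)/c_{pq}(s)>0$ for all $p,q\geq 0$, proving positive definiteness throughout $(-n-1,\infty)$.

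For $s<-n-1$ it suffices to exhibit one negative coefficient. Specializing to $q=0$, where $\FF21{p,0}{p+n}{t}\equiv1$, simplifies~\eqref{tXH} to $c_{p0}(s)=\Gamma(p+n)\Gamma(s+1)/(2\Gamma(p+n+s+1))$ and yields the explicit formula $c_{00}(s)/c_{p0}(s)=(n+s+1)_p/(n)_p$; taking $p=1$ gives $(n+s+1)/n<0$ for $s<-n-1$, defeating positive definiteness. The main obstacle will be the careful handling of the meromorphic extension near the common simple poles of $c_{00}(s)$ and $c_{pq}(s)$ at $s=-1,\dots,-n$, and the treatment of the edge case $s=-n-1$ itself, where the above closed-form identity exhibits transparent cancellation and shows that every coefficient with $p+q\geq1$ vanishes, so that the kernel degenerates to a positive constant.
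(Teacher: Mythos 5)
Your overall architecture matches the paper's: reduce via the Peter--Weyl expansion to positivity and finiteness of the ratios $c_{00}(s)/c_{pq}(s)$, establish the meromorphic continuation from~\eqref{tZH}, and use $p=1,q=0$ to rule out $s\le -n-1$. The necessity direction and the $q=0$ computation are exactly as in the paper. But your key positivity step for $s\in(-n-1,-1]$ has a genuine gap.

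You argue: for $s>-n-1$ every Pochhammer symbol and Gamma factor appearing in the double series of~\eqref{tZH} is positive, therefore ``$F_{pq}(s)>0$''. This inference is only valid where the series actually \emph{converges}; the meromorphic continuation of a series with positive terms need not be positive off its domain of convergence (compare $\sum_{k\ge0}z^k=1/(1-z)$, all terms positive at $z=2$, but $1/(1-z)=-1<0$ there). Formula~\eqref{tZH} is derived by term-by-term integration of~\eqref{tXH}, which is only justified for $\Re s>-1$; so a priori $F_{pq}(s)$ for $s\in(-n-1,-1]$ is merely the analytic continuation of the series, not its sum. To close the gap you must verify that the double series in~\eqref{tZH} in fact converges on all of $(-n-1,+\infty)$. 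This happens to be true --- the generic $(j,k)$ term behaves like $j^{-n-1}k^{-n-1}(j+k)^{-s-1}$, and the resulting double sum converges precisely for $s>-n-1$ --- but this estimate is a real step that your proposal skips. The paper sidesteps the convergence boundary question entirely: writing $2c_{pq}(s)=\int_0^1 G_{pq}(t)(1-t)^s\,dt$ and integrating by parts $n$ times, it obtains the integral representation~\eqref{tZD} with integrand $G_{pq}^{(n)}(t)(1-t)^{s+n}$, which is manifestly integrable (the $\log\frac1{1-t}$ singularity of $G_{pq}^{(n)}$ is tamed by $(1-t)^{s+n}$ with $s+n>-1$) and manifestly positive (all derivatives of $G_{pq}$ are positive on $(0,1)$ by~\eqref{tYQ}), giving $f_{pq}(s)\in(0,\infty)$ for $s>-n-1$ at once. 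A smaller quibble: your remark that at $s=-n-1$ ``every coefficient with $p+q\ge1$ vanishes'' by cancellation of $(n+s+1)_{p+q}$ is not by itself sufficient --- one also needs to control $F_{pq}(-n-1)$, which in fact diverges, so the vanishing is of the form $0$-times or $0/\infty$ and requires a limit argument --- but since the statement concerns only $s>-n-1$ this is peripheral.
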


\begin{proof} By~\eqref{tXD}, we~have
\[ c_{00}(s) K_s(r\zeta,R\eta) = \sum_{p,q} \frac{c_{00}(s)}{c_{pq}(s)} S^{pq}(r)S^{pq}(R)H^{pq}(\cdd\zeta\eta). \label{tZA}  \]
It~is therefore enough to exhibit an analytic continuation of the functions $f_{pq}(s):=\frac{c_{00}(s)}{c_{pq}(s)}$ in~$s$
for all~$p,q$, and show that the ``Wallach set''
$$ \mathcal W:=\{s\in\CC: 0<f_{pq}(s)<+\infty \; \forall p,q\ge0 \}  $$
coincides with the interval $(-n-1,+\infty)$. A~routine convergence check then yields the desired
analytic continuation of~\eqref{tZA}, and we are done.

First of all, from either \eqref{tZG} or \eqref{tZH} it is immediate that $c_{pq}(s)$ and, hence,
$f_{pq}(s)$ extend meromorphically to all $s\in\CC$ (because the Gamma function does).

Next, we~have already seen in \eqref{tXK} that
$$ 2c_{00}(s) = \GG{n,s+1}{n+s+1} = \frac{\Gamma(n)}{(s+1)_n} .  $$
Similarly
$$ 2c_{10}(s) = \GG{n+1,s+1}{n+s+2}.  $$
Hence
$$ f_{10}(s) = \frac{c_{00}(s)}{c_{10}(s)} = \GG{n,s+1,n+s+2}{n+s+1,n+1,s+1} = \frac{n+s+1}n.  $$
This is positive only for $n+s+1>0$, so $\mathcal W\subset(-n-1,+\infty)$.

Let us momentarily denote
\[ G_{pq}(t) := \GG{n+p,n+q}{n,n+p+q}^2 t^{p+q+n-1} \FF21{p,q}{n+p+q}t ^2 ,  \label{tZB}  \]
so~that
\[ 2c_{pq}(s) = \int_0^1 G_{pq}(t) (1-t)^s \,dt, \qquad \Re s>-1.  \label{tZF}  \]
Integrating by parts $n$ times, we~obtain
\[ 2c_{pq}(s) = \sum_{j=0}^{n-1} \frac{[-G_{pq}^{(j)}(t)(1-t)^{s+j+1}]_{t=0}^{t=1}}{(s+1)_{j+1}}
 + \frac1{(s+1)_n} \int_0^1 G_{pq}^{(n)} (t) (1-t)^{s+n} \, dt ,  \label{tZC}  \]
provided all the terms are finite. Now~by~\eqref{tZB}, the derivative $G_{pq}^{(k)}(t)$
is continuous up to $t=1$ for $k<n$, while $G_{pq}^{(n)}(t)\approx\log\frac1{1-t}$.
Also, $G_{pq}^{(j)}(0)=0$ whenever $j<p+q+n-1$. Thus all the terms in the sum in~\eqref{tZC}
actually vanish for $s>-n-1$, and we thus get for any $(p,q)\neq(0,0)$ and $s>-n-1$
the representation
$$ 2c_{pq}(s) = \frac1{(s+1)_n} \int_0^1 G_{pq}^{(n)}(t) (1-t)^{s+n} \,dt  $$
and
\[ \frac{c_{pq}(s)}{c_{00}(s)} = \frac1{\Gamma(n)} \int_0^1 G_{pq}^{(n)}(t) (1-t)^{s+n} \,dt. \label{tZD}  \]
Furthermore, in~view of~\eqref{tYQ}, all derivatives of $G_{pq}$ are positive on~$(0,1)$,
hence the integrand in \eqref{tZD} is positive, and $f_{pq}(s)$ is positive and finite,
for~all $s>-n-1$. Consequently, $(-n-1,+\infty)\subset\mathcal W$, completing the proof.  
\end{proof}

\subsection{Semiclassical asymptotics}
In~the holomorphic case, the behavior of $K_s(z,w)$ as $s\to+\infty$ is of importance in certain
quantization procedures~\cite{E24}; the~weight parameter $s$ plays the role of the reciprocal of
the Planck constant, and one therefore speaks of ``semiclassical'' limits. The~following result
is again the same as for the holomorphic, as~well as for some harmonic~\cite{EIceland}, situations.

\begin{proposition} \label{PL}
For all $z\in\Bn$,
$$ \lim_{s\to+\infty} K_s(z,z)^{1/s} = (1-|z|^2)^{-1}.  $$
\end{proposition}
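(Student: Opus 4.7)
The plan is a squeeze argument: a lower bound from comparison with the holomorphic kernel, and an upper bound from the invariant mean-value property.

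For the \emph{lower bound}, note that every holomorphic function in $L^2(\Bn,(1-|z|^2)^s\,dz)$ is $M$-harmonic, so the holomorphic weighted Bergman space sits as a closed subspace of $L^2\mh(\Bn,(1-|z|^2)^s\,dz)$. Since the reproducing kernel on the diagonal of a functional Hilbert space dominates that of any closed subspace,
$$ K_s(z,z) \geq K_s\hol(z,z) = \frac{\Gamma(n+s+1)}{\Gamma(s+1)\pi^n}(1-|z|^2)^{-n-s-1}. $$
Taking $s$-th roots and using $[\Gamma(n+s+1)/\Gamma(s+1)]^{1/s}\to 1$ (Stirling) and $(1-|z|^2)^{-(n+s+1)/s}\to(1-|z|^2)^{-1}$, I get $\liminf_{s\to+\infty} K_s(z,z)^{1/s}\geq(1-|z|^2)^{-1}$.

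For the \emph{upper bound}, fix $r\in(0,1)$ and let $B(z,r):=\phi_z(B(0,r))$ denote the Moebius ball of pseudo-hyperbolic radius $r$ centered at $z$. Its $\operatorname{Aut}(\Bn)$-invariant volume $\tau_r:=\tau(B(z,r))$ does not depend on~$z$. For any $f\in L^2\mh(\Bn,(1-|z|^2)^s\,dz)$, elliptic regularity makes $f$ smooth, so the invariant mean-value property and Cauchy--Schwarz give
$$ |f(z)|^2 \leq \tau_r^{-1}\int_{B(z,r)} |f(w)|^2 (1-|w|^2)^{-n-1}\,dw. $$
On $B(z,r)$ the identity $1-|\phi_z\zeta|^2=\frac{(1-|z|^2)(1-|\zeta|^2)}{|1-\spr{\zeta,z}|^2}$ with $|\zeta|<r$ yields the comparison $\tfrac{1-r}{1+r}(1-|z|^2)\leq 1-|w|^2\leq \tfrac{1}{(1-r)^2}(1-|z|^2)$. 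Writing $(1-|w|^2)^{-n-1}=(1-|w|^2)^{-n-1-s}(1-|w|^2)^s$ and using the lower comparison on the first factor,
$$ (1-|w|^2)^{-n-1}\leq \Big(\tfrac{1+r}{1-r}\Big)^{n+1+s}(1-|z|^2)^{-n-1-s}(1-|w|^2)^s. $$
Integrating and taking the supremum over unit-norm $f$ gives
$$ K_s(z,z)\leq \tau_r^{-1}\Big(\tfrac{1+r}{1-r}\Big)^{n+1+s}(1-|z|^2)^{-n-1-s}. $$

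Taking $s$-th roots, the factor $\tau_r^{-1/s}\to 1$, $(\tfrac{1+r}{1-r})^{(n+1+s)/s}\to\tfrac{1+r}{1-r}$, and $(1-|z|^2)^{-(n+1+s)/s}\to(1-|z|^2)^{-1}$, so for every fixed $r$
$$ \limsup_{s\to+\infty} K_s(z,z)^{1/s}\leq \tfrac{1+r}{1-r}(1-|z|^2)^{-1}. $$
Letting $r\searrow 0$ removes the constant and, combined with the lower bound, yields the claim. The only delicate point is keeping the two exponents $-n-1$ and $+s$ bundled correctly so that the $r$-dependent constant has an $s$-th root converging to a finite limit as $s\to\infty$; once that bookkeeping is done, the argument is immediate.
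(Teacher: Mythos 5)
Your proof is correct and structurally the same as the paper's: a lower bound from the holomorphic subspace inclusion, an upper bound from the invariant mean-value property plus Cauchy--Schwarz, and a squeeze by letting $r\searrow 0$. The only difference is in how the upper bound is implemented: you use the invariant-measure form of the MVP and then compare $(1-|w|^2)^{-n-1}$ pointwise to $(1-|z|^2)^{-n-1-s}(1-|w|^2)^{s}$ on the Moebius ball via the identity $1-|\phi_z\zeta|^2=\frac{(1-|z|^2)(1-|\zeta|^2)}{|1-\spr{\zeta,z}|^2}$, yielding the explicit constant $\bigl(\tfrac{1+r}{1-r}\bigr)^{n+1+s}$; the paper instead transports by $\phi_z$ to a Lebesgue mean over a Euclidean ball, applies Cauchy--Schwarz against the weight $(1-|y|^2)^s$, and invokes the limit $\|F\|_{L^s(d\mu)}\to\|F\|_{L^\infty(d\mu)}$ to arrive at $\sup_{|\phi_z y|<r}(1-|y|^2)^{-1}$. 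Both handle the bookkeeping correctly and both arrive at the same $\limsup$ bound before sending $r\searrow0$, so the two versions are equivalent in substance.
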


\begin{proof} From the definition of the reproducing kernel $K_\HH(z,z)$ as the norm square of
the evaluation functional,
\[ K_\HH(z,z) = \sup\{|f(z)|^2: f\in\HH,\;\|f\|_\HH\le1 \},  \label{tZK}  \]
and the fact that holomorphic functions are \Mh it follows that
$$ K_s(z,z) \ge K\hol_s(z,z) .  $$
Hence
\[ \liminf_{s\to+\infty} K_s(z,z)^{1/s} \ge \liminf_{s\to+\infty} K\hol_s(z,z)^{1/s} = (1-|z|^2)^{-1} , \label{tZL} \]
by~the known result for the holomorphic case (cf.~\eqref{tZJ}).

On~the other hand, by the invariant mean value property of \Mh functions,
we~have for any \Mh $f$ and $0<r<1$
\begin{align*}
f(z) = (f\circ\phi_z)(0) &= \frac{n!}{\pi^n r^{2n}} \int_{|x|<r} (f\circ\phi_z)(x) \,dx  \\
&= \frac{n!}{\pi^n r^{2n}} \int_{|\phi_z y|<r} f(y) \, J_z(y) \,dy,
\end{align*}
where $J_z$ is the Jacobian of~$\phi_z$. By~Cauchy-Schwarz,
\begin{align*}
|f(z)| &\le \frac{n!}{\pi^n r^{2n}} \Big(\int_{|\phi_z y|<r} |f(y)|^2 (1-|y|^2)^s \,dy\Big)^{1/2}
  \Big(\int_{|\phi_z y|<r} \frac{J_z(y)^2}{(1-|y|^2)^s} \,dy\Big)^{1/2}  \\
&\le \frac{n!}{\pi^n r^{2n}} \|f\|_s \Big(\int_{|\phi_z y|<r} \frac{J_z(y)^2}{(1-|y|^2)^s} \,dy\Big)^{1/2} .
\end{align*}
Taking supremum over all $f$ with $\|f\|_s\le1$, we~get by~\eqref{tZK}
$$ K_s(z,z)^{1/2} \le \frac{n!}{\pi^n r^{2n}} \Big(\int_{|\phi_z y|<r} \frac{J_z(y)^2}{(1-|y|^2)^s} \,dy\Big)^{1/2}, $$
whence
$$ K_s(z,z)^{1/s} \le \Big(\frac{n!}{\pi^n r^{2n}}\Big)^{2/s} \Big(\int_{|\phi_z y|<r} \frac{J_z(y)^2}{(1-|y|^2)^s} \,dy\Big)^{1/s} . $$
Letting $s\to+\infty$ and using the fact that
$$ \|F\|_{L^s(d\mu} \to \|F\|_{L^\infty(d\mu)} \qquad\text{as }s\to+\infty  $$
for any finite measure $\mu$ and bounded function~$F$, we~obtain
$$ \limsup_{s\to+\infty} K_s(z,z)^{1/s} \le \sup_{|\phi_z(y)|<r} (1-|y|^2)^{-1}.  $$
Finally, since $r\in(0,1)$ was arbitrary, letting $r\searrow0$ yields
$$ \limsup_{s\to+\infty} K_s(z,z)^{1/s} \le (1-|z|^2)^{-1}.  $$
Combining this with \eqref{tZL} completes the proof.  
\end{proof}

It~would be interesting to know what is the limit of $|K_s(z,w)|^{1/s}$ as $s\to+\infty$
for $z\neq w$, or~whether there is any asymptotic expansion of $K_s(z,z)(1-|z|^2)^s$ in
decreasing powers of~$s$, as~is the case for some harmonic Bergman kernels~\cite{EIceland}.

\subsection{Particular cases} For $z\bot w$, the formula \eqref{tTH} for the Szeg\"o kernel
$K\Sz(z,w)$ can also be evaluated in a different~way: namely, using $\Un$-invariance,
we~can assume without loss of generality that $z=|z|e_1$ and $w=|w|e_2$. The~integral in
\eqref{tTH} then takes the form
$$ (1-|z|^2)^n (1-|w|^2)^n \int_\pBn |1-|z|\zeta_1|^{-2n} |1-|w|\zeta_2|^{-2n} \,d\sigma(\zeta).  $$
Using the binomial expansion for $(1-t)^{-n}$ and integrating term by term shows that
the last integral equals
\[ \frac{2\pi^n}{\Gamma(n)} \sum_{k,l=0}^\infty \frac{(n)_k^2(n)_l^2}{(n)_{k+l}} \frac{|z|^{2k}|w|^{2l}}{k!l!}
= \frac{2\pi^n}{\Gamma(n)} \FE3{n,n,n,n}n{|z|^2,|w|^2} , \label{tZM}  \]
where $F_3$ is the third Appell hypergeometric function
$$ \FE3{a,a',b,b'}c{x,y} = \sum_{j,k} \frac{(a)_j(a')_k (b)_j (b')_k}{(c)_{j+k}j!k!} x^j y^k .  $$
Naturally, \eqref{tZM} agrees with the formula \eqref{tTS} for $\spr{z,w}=\spr{w,z}=0$, as~it~should.

Another case when $K\Sz(z,w)$ can be evaluated independently is when $z=w$. The~integral in \eqref{tTH}
then becomes
$$ (1-|z|^2)^{2n} \int_\pBn |1-\spr{z,\zeta}|^{-4n} \,d\sigma(\zeta). $$
Upon substituting again the binomial expansion and integrating term by term, this produces
\begin{align}
& \frac{2\pi^n}{\Gamma(n)} (1-|z|^2)^{2n} \sum_k \frac{(2n)_k^2}{(n)_k} \frac{|z|^{2k}}{k!}
 = \frac{2\pi^n}{\Gamma(n)} (1-|z|^2)^{2n} \FF21{2n,2n}n{|z|^2} \nonumber  \\
&= \frac{2\pi^n}{\Gamma(n)} (1-|z|^2)^{-n} \FF21{-n,-n}n{|z|^2} \label{tZN}
\end{align}
(note that the last ${}_2\!F_1$ is actually a polynomial; we~have used the Euler transformation
formula~\eqref{tWV}).

It~is entertaining to compare \eqref{tZN} with direct application of the formula~\eqref{tXD},
where in the latter the Szeg\"o case corresponds, as~we have already noted repeatedly,
to $c_{pq}\equiv1$ $\forall p,q$. Namely, \eqref{tXD}~gives
$$ K\Sz(z,z) = \sum_{p,q} S^{pq}(|z|)^2 H^{pq}(1).  $$
Now~by~\eqref{tVC}
$$ H^{pq}(1) = \frac{(n+p+q-1)(n+p-2)!(n+q-2)!}{p!q!(n-1)!(n-2)!}. $$
Substituting \eqref{tVF} for~$S^{pq}$, we~thus get from~\eqref{tZN}
$$ \sum_{p,q} t^{p+q} \GG{n+p,n+q}{n,n+p+q}^2  \FF21{p,q}{n+p+q}t ^2 H^{pq}(1) = (1-t)^{-n} \FF21{-n,-n}nt, $$
or, using again~\eqref{tWV},
$$ \sum_{p,q} t^{p+q} \GG{n+p,n+q}{n,n+p+q}^2  \FF21{n+p,n+q}{n+p+q}t ^2 H^{pq}(1) = \FF21{2n,2n}nt. $$
Looking at the coefficients at like powers of~$t$, this is equivalent~to
$$ \sum_{\substack{p,q,j,k\ge0\\p+q+j+k=m}} \GG{p+n+j,q+n+j}{n,p+q+n+j} \GG{p+n+k,q+n+k}{n,p+q+n+k} H^{pq}(1) = \frac{(2n)_m^2}{m!(n)_m}, $$
for all $m=0,1,2,\dots$.
We~do not know a direct proof of this (valid) formula.

\subsection{Weighted \Mh Green functions} It~has been known since the monograph of Bergman and Schiffer
\cite{BS} that the harmonic Bergman kernel $H(z,w)$ on a domain is intimately connected with the
Green function $G(z,w)$ for the biharmonic operator $\Delta^2$ with Dirichlet boundary conditions:
namely,
$$ H(z,w) = - \Delta_z \Delta_w G(z,w) ,  $$
where the subscript at $\Delta$ indicates the variable that the operator is applied~to.
An~analogous formula holds also for the weighted case. We~conclude this paper by pointing
out a similar connection in the \Mh case.

Let
$$ d\tau(z) := (1-|z|^2)^{-n-1}\,dz  $$
be the invariant measure on~$\Bn$. Given a positive $C^\infty$ weight functions $\rho$ on~$\Bn$,
consider the ``invariant weighted biharmonic'' operator
$$ \td \rho \td $$
with our $\td$ from~\eqref{tTA}. By~its Green function at a point $w\in\Bn$ we mean,
by~definition, a~function $G(z,w)\equiv G_w(z)$ such that, firstly,
$$ u(w) = \int_\Bn G_w \td\rho\td u \, d\tau $$
for all smooth functions $u$ whose support is a compact subset of~$\Bn$
(that~is, $\td\rho\td G_w=\delta_w$, the Dirac point mass at~$w$ with respect to~$d\tau$,
in~the sense of distributions); and secondly, both $G_w$ and its normal derivative $\partial G_w/
\partial n$ vanish at~$\pBn$. (More precisely --- they grow sufficiently slowly as~$|z|\nearrow1$;
we~are skipping some technical details here.)
It~is then a fact that such $G_w$ exists, is~uniquely determined, $G(z,w)=G(w,z)$,
and the following proposition holds.

\begin{proposition}  \label{PM}
The weighted \Mh Bergman kernel on $\Bn$ with respect to the measure $\rho(z)^{-1}\,d\tau(z)$
satisfies
\[ K_{d\tau/\rho}(z,w) = - \rho(z) \rho(w) \td_z \td_w G(z,w).  \label{tZO}  \]
\end{proposition}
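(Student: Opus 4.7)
The claim is the $M$-invariant analog of the classical Bergman-Schiffer-Garabedian relation, which in the ordinary harmonic case on a planar domain reads $K^{\text{harm}}(z,w)=-\Delta_z\Delta_wG(z,w)$ with $G$ the biharmonic Green function. The plan is to verify both that $K(z,w):=-\rho(z)\rho(w)\td_z\td_wG(z,w)$ is $M$-harmonic in each variable and that it reproduces functions in $L^2\mh(\Bn,\rho^{-1}d\tau)$.

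For $M$-harmonicity in~$z$: since $\td_z$ and $\td_w$ act on independent variables they commute, and by the defining equation $\td_z[\rho(z)\td_zG(z,w)]=\delta_w(z)$ (as a distribution with respect to~$d\tau$),
\[\td_zK(z,w)=-\rho(w)\,\td_z\bigl[\rho(z)\td_z\td_wG(z,w)\bigr]=-\rho(w)\,\td_w\td_z[\rho(z)\td_zG(z,w)]=0\]
for $z\neq w$. The same holds in~$w$ by the symmetry $G(z,w)=G(w,z)$. Smoothness of $K$ across the interior diagonal is a separate point, to be secured by showing that the fourth-order-Green-function singularity cancels in the mixed derivative $\td_z\td_wG$ --- exactly as happens for Boggio's biharmonic kernel on the disk.

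For the reproducing property, let $f\in L^2\mh(\Bn,\rho^{-1}d\tau)$. Substituting the proposed kernel and pulling constants outside the integral yields
\[\int_\Bn f(z)K(z,w)\rho(z)^{-1}d\tau(z)=-\rho(w)\,\td_w\!\int_\Bn f(z)\td_zG(z,w)\,d\tau(z),\]
so the task is to show the right-hand side equals~$f(w)$. A Green-type identity for $\td$ with respect to~$d\tau$ transfers the $\td_z$ off $G_w$ onto~$f$; since $\td f=0$, only boundary contributions at $\pBn$ survive, and these are interpreted via the Dirichlet-type vanishing of $G_w$ and~$\partial_nG_w$. Applying $\td_w$ to the resulting expression and invoking the fundamental-solution property $\td_w[\rho(w)\td_wG(z,w)]=\delta_z(w)$ --- which follows from symmetry --- then completes the identification of the right-hand side with~$f(w)$.

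The main obstacle is the rigorous handling of boundary contributions. The Dirichlet conditions $G_w=\partial_nG_w=0$ hold only in the ``sufficiently slow growth'' sense acknowledged in the definition, while $f\in L^2\mh$ itself need not vanish at~$\pBn$. One therefore must work on an exhaustion $\{|z|<r<1\}$ of~$\Bn$, retain all boundary contributions in Green's identity, and balance them --- in the limit $r\nearrow1$ --- against the blow-up of $d\tau=(1-|z|^2)^{-n-1}dz$ and the decay of~$G_w$. A secondary obstacle is the cancellation of the diagonal singularity of $\td_z\td_wG$ needed to ensure that $K$ is actually a smooth function on the interior of~$\Bn\times\Bn$; this parallels the explicit cancellations visible in Boggio's formula on the disk.
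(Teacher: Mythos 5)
The paper omits the proof of Proposition~\ref{PM}, simply citing the argument of Bergman--Schiffer~\cite{BS} with the invariant Green formula of~\cite{ZhuBALL} inserted. Your overall strategy --- verify $M$-harmonicity in each variable, then verify the reproducing property via a Green-type identity exploiting the Dirichlet boundary conditions on~$G_w$ --- is exactly that route, and your treatment of $M$-harmonicity away from the diagonal is sound.

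There is, however, a concrete logical gap in your reproducing-property argument. You first pull $\td_w$ outside the integral,
\[
\int_\Bn f(z)K(z,w)\rho(z)^{-1}d\tau(z)=-\rho(w)\,\td_w\!\int_\Bn f(z)\td_zG(z,w)\,d\tau(z),
\]
then apply the Green identity to $\int_\Bn f\,\td_zG_w\,d\tau$ and conclude (since $\td f=0$ and $G_w=\partial_nG_w=0$ on $\pBn$) that this integral vanishes. But then $\td_w$ of an identically vanishing function of $w$ is again $0$, not $f(w)$; your invocation of the fundamental-solution property in the $w$-variable cannot rescue this, because at that point there is nothing left to apply it to. The step that fails is the interchange of $\td_w$ with $\int_\Bn\cdots\,d\tau(z)$: the function $\td_zG(z,w)$ has a diagonal singularity (of biharmonic-Laplacian type in $z-w$), so $\td_w$ does \emph{not} commute with the $z$-integral, and the defect of the interchange is precisely what produces $f(w)$. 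The correct structure is to apply the Green identity for $\td$ directly to $\int_\Bn f\,\td_z\bigl(\td_wG(z,w)\bigr)\,d\tau(z)$ over $\Bn\setminus B_\varepsilon(w)$, observe that the $\pBn$-contribution vanishes (because $\td_wG$ and its normal derivative inherit the vanishing of $G$ and $\partial_nG$ in the first slot), and then show that the $\partial B_\varepsilon(w)$-contribution tends to $f(w)/\rho(w)$ as $\varepsilon\searrow0$, by matching the leading singularity of $\td_wG(z,w)$ near $z=w$ with the fundamental solution of $\td$ (here the local constant $\rho(w)$ enters, which is what makes the final prefactor $-\rho(w)$ come out right). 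Your concluding remarks about cancellations "as in Boggio's formula" also read as though you expect the diagonal singularity to cancel; rather, the pointwise value of $\td_z\td_wG$ off the diagonal extends smoothly across it, while the \emph{distributional} $\td_z\td_wG$ carries a Dirac term on the diagonal whose contribution is exactly the reproducing term $f(w)$.
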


We~omit the proof, which is the same as for the original harmonic case in~\cite{BS},
only using the Green formula for $d\tau$ from \cite[Section~1.6]{ZhuBALL} in the place
of the ordinary Green formula.

Choosing, in~particular, $\rho(z)=(1-|z|^2)^{-n-s-1}$, the \Mh kernels $K_{d\tau/\rho}$ in
\eqref{tZO} will be precisely our~$K_s$. One~of our earlier ideas how to compute $K_s(z,w)$
was to find $G(z,w)$ first and then apply~\eqref{tZO}; a~possible approach to finding $G(z,w)$
being along the lines of the one used in \cite{EP1} for the ordinary biharmonic Green function
on the annulus, and in \cite{EP2} for the unweighted invariant biharmonic Green function on the~ball.
In~both cases, the~ordinary decomposition of a function into its Fourier components would need
to be replaced by the decomposition \eqref{tVB} into the $(p,q)$ components with respect to
the action of~$\Un$. Unfortunately, so~far we have not been able to carry out this program.


\begin{thebibliography}{ABC}

\bibitem[ABC]{ABC} P. Ahern, J. Bruna and C. Cascante,  {\it $H^p$-theory for generalized $M$-harmonic
functions in the unit ball},  Indiana Univ. Math. J. {\bf45} (1996), 103--135.

\bibitem[AK]{AKF} P. Appell, J. Kamp\'e de F\'eriet, {\it Fonctions hyperg\'eom\'etriques et hypersph\'eriques,\/}
Gauthier-Villars, Paris 1926.

\bibitem[Ar]{Aron} N. Aronszajn: {\it Theory of reproducing kernels,\/} Trans. Amer. Math. Soc. {\bf 68} (1950), 337--404.

\bibitem[BE]{BE} H. Bateman, A. Erd\'elyi, {\it Higher transcendental functions,
vol.~I,\/} McGraw-Hill, New York - Toronto - London 1953.

\bibitem[BS]{BS} S. Bergman, M. Schiffer, {\it Kernel functions and elliptic differential equations in mathematica physics,\/}
Academic Press 1953.

\bibitem[Bk]{Blaschke} P. Blaschke: {\it Berezin transform on harmonic Bergman spaces on the real ball,\/}
J.~Math. Anal. Appl. {\bf411} (2014), 607--630.

\bibitem[E1]{E24} M. Engli\v s: {\it A Forelli-Rudin construction and asymptotics of weighted Bergman kernels,\/}
J. Funct. Anal. {\bf177} (2000), 257--281.

\bibitem[E2]{EJMPA} M. Engli\v s: {\it Analytic continuation of weighted Bergman kernels,\/}
J. Math. Pures Appl. {\bf94} (2010), 622--650.

\bibitem[E3]{EIceland} M. Engli\v s: {\it High-power asymptotics of some weighted harmonic Bergman kernels,\/}
J. Funct. Anal. {\bf271} (2016), 1243--1261.

\bibitem[EP1]{EP1} M. Engli\v s, J. Peetre: {\it A Green's function for the annulus,\/}
Ann. Mat. Pura Appl. {\bf171} (1996), 313--377.

\bibitem[EP2]{EP2} M. Engli\v s, J. Peetre: {\it Green's functions for powers of the invariant Laplacian,\/}
Canadian J. Math. {\bf50} (1998), 40--73.

\bibitem[Ex1]{ExtB} H. Exton, {\it Multiple hypergeometric functions and applications,\/}
Wiley, 1976.

\bibitem[Ex2]{ExtP} H. Exton: {\it On a hypergeometric function of four variables with
a new aspect of SL-symmetry,\/} Ann. Mat. Pura Appl. {\bf 161} (1992), 315--343.

\bibitem[Fe]{Fed} M.V. Fedoryuk, {\it Asymptotics, integrals, series\/} (in~Russian), Nauka, Moscow 1987.

\bibitem[Fo]{Foll} G.B. Folland: {\it Spherical harmonic expansion of the Poisson-Szeg\"o kernel for the ball,\/}
Proc. Amer. Math. Soc. {\bf47} (1975), 401--408.

\bibitem[Ka]{Karls} P.W. Karlsson, {\it On certain generalizations of hypergeometric functions
of two variables,\/} Lyngby 1976.

\bibitem[KS]{KS} P.W. Karlsson, H.M. Srivastava, {\it Multiple Gaussian hypergeometric series,\/}
Wiley 1985.

\bibitem[Kr1]{KraPDECA} S.G. Krantz, {\it Partial differential equations and complex analysis,\/}
CRC Press, 1992.


\bibitem[RV]{RV} H. Rossi, M. Vergne: {\it Analytic continuation of the holomorphic discrete
series of a semi-simple Lie group,\/} Acta Math. {\bf 136} (1976), 1--59.

\bibitem[Ru]{Ru} W. Rudin, {\it Function theory in the unit ball of $\CC^n$,\/}
Springer, Berlin - Heidelberg 2008.

\bibitem[St]{Stoll} M. Stoll, {\it Invariant potential theory in the unit ball of $\CC^n$,\/}
Cambridge University Press, 1994.

\bibitem[Zh]{ZhuBALL} K. Zhu, {\it Spaces of holomorphic functions on the unit ball,\/}
Springer 2005.

\end{thebibliography}
\end{document}